\newtheorem{theorem}{Theorem}[section]
\newtheorem{proposition}[theorem]{Proposition}
\newtheorem{lemma}[theorem]{Lemma}
\newtheorem{corollary}[theorem]{Corollary}
\newcommand{\R}{{\mathbb R}}
\newcommand{\C}{{\mathbb C}}
\newcommand{\Z}{{\mathbb Z}}
\newcommand{\T}{{\mathbb T}}
\newcommand{\op}[1]{\!\!\mathop{\rm ~#1}\nolimits}
\newenvironment{remark}{\refstepcounter{theorem}\par\medskip\noindent{\bf
Remark~\thetheorem.}}{\unskip\nobreak\hfill\hbox{}\par}
\newenvironment{example}{\refstepcounter{theorem}\par\medskip\noindent{\bf
Example~\thetheorem.}}{\unskip\nobreak\hfill\hbox{}\par}
\newenvironment{definition}{\refstepcounter{theorem}\par\medskip\noindent{\bf
Definition~\thetheorem.}}
\renewcommand{\geq}{\geqslant}
\renewcommand{\leq}{\leqslant}
\begin{document}

\title{{\bf Log\--concavity and symplectic flows}}

\author{Yi Lin and \'Alvaro Pelayo}
\date{}
\maketitle

\begin{abstract}
Let $(M,\omega)$ be a  compact, connected symplectic
$2n$\--dimensional  manifold on which an
$(n-2)$\--dimensional torus $T$ acts effectively and Hamiltonianly. Under the assumption that there is an effective complementary $2$\--torus acting on $M$ with symplectic orbits,
we show that the Duistermaat\--Heckman measure of the $T$\--action is log\--concave. This verifies
the logarithmic concavity conjecture for a class of
inequivalent $T$\--actions. Then we use this conjecture to prove the following:  if
there is an effective symplectic action of an $(n-2)$\--dimensional
torus  $T$ on a compact, connected symplectic $2n$\--dimensional manifold that admits an effective complementary symplectic action of a $2$\--torus with symplectic orbits, then the existence of $T$\--fixed points implies that the $T$\--action is Hamiltonian. As a consequence of this, we give new proofs of a classical theorem by McDuff about $S^1$\--actions, and some of its recent extensions.
\end{abstract}

\section{Introduction}

Let $T$ be a torus, i.e. a compact, connected, commutative Lie group.
This paper proves  the logarithmic
concavity conjecture of the
Duistermaat\--Heckman density function for a class of inequivalent Hamiltonian $T$\--actions
on compact, connected, symplectic manifolds $(M, \omega)$
of dimension $4+2\dim T$. In particular, we address the case of $S^1$\--actions on
$6$\--dimensional manifolds, which has received much interest recently.

Then we
use logarithmic concavity methods
to show that the Hamiltonian character of the symplectic
$T$\--action is equivalent to the fact that there are some
$T$\--fixed points. Our
results imply, with different proofs, a classical theorem of McDuff
about circle (i.e. $1$\--dimensional tori) actions on $4$\--manifolds \cite{Mc1986} (see Corollary \ref{Mc1986})  and its
extension to complexity\--$1$ actions
by Kim \cite{kim} (see Theorem \ref{complexity-one-mcduff}
and Section \ref{remarks}).

In fact, one of the fundamental questions in equivariant symplectic geometry is
to what extent there is a close relationship between:  (i) a
symplectic $T$\--action being Hamiltonian; and (ii)
the symplectic $T$\--action having fixed points. In the late fifties
Frankel \cite{Fr1959} proved  that on compact connected K\"ahler
manifolds, a  $2\pi$\--periodic vector field whose flow preserves the K\"ahler form (equivalently, a K\"ahler $S^1$\--action) is
Hamiltonian if and only if has a fixed point.  This easily implies
that in the compact, connected,
K\"ahler setting, conditions (i) and (ii)
are equivalent for any torus $T$ (see for instance Step 4 in
\cite[Proof of Theorem 3]{PeRa2010}).

 In the early eighties, McDuff proved that if the fixed point set of a symplectic action of a circle on a compact
connected symplectic four manifold is non-empty,  then the action must be Hamiltonian. However, she also constructed
an example of a symplectic circle on a compact, connected symplectic six manifold with non-empty fixed point set which is not Hamiltonian.  So while it is true that if a symplectic $T$\--action
on a compact, connected symplectic manifold is Hamiltonian, then it does have fixed points, the converse does not hold in general in dimension six. However, McDuff's result did not have a transversal toric symmetric, in
which case symplectic flows be Hamiltonian. Precisely, we have the following.

\begin{theorem} \label{a}
Let $(M,\omega)$ be a
compact, connected, symplectic $6$\--dimensional manifold equipped with an effective symplectic
action of a $2$\--torus $S$ on $M$ whose orbits are symplectic. Let $\{\varphi_t\}_{t \in \mathbb{R}}$  be
a $2\pi$\--periodic effective symplectic flow  which commutes with the flows
generated by the $S$\--action. If   $\{\varphi_t\}_{t \in \mathbb{R}}$  has equilibrium points, then it
 is a Hamiltonian flow.
\end{theorem}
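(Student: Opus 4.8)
The plan is to argue by contradiction, the log\--concavity theorem established above --- for Hamiltonian circle actions on compact $6$\--manifolds that admit a complementary symplectic $2$\--torus with symplectic orbits --- being the essential new ingredient. So suppose $\{\varphi_t\}_{t\in\RM}$ has an equilibrium point but is \emph{not} Hamiltonian, and write $X$ for its infinitesimal generator, so that $\alpha:=\iota_X\omega$ is closed and non\--exact. \textbf{Step 1 (passage to a $3$\--torus).} I would first check that $\{\varphi_t\}$ and the $S$\--action together generate an effective action of a torus $G$ on $M$ with $\operatorname{Lie}(G)=\operatorname{Lie}(S)\oplus\RM X$. The closure in $\symp(M,\omega)$ of the group they generate is a torus of dimension at most $3$; it cannot be $1$\--dimensional, since it contains $S\cong\T^2$, and if it were $2$\--dimensional then $\{\varphi_t\}$ would be a subcircle of $S$, so its equilibrium point would be fixed by a $1$\--dimensional subgroup of $S$, forcing the $S$\--orbit through it to be at most $1$\--dimensional --- impossible, since the $S$\--orbits are symplectic and hence $2$\--dimensional. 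Thus $G\cong\T^3$. A Cartan\--calculus computation gives $\DD(\omega(X,X_i))=0$ for the generator $X_i$ of a circle subgroup of $S$; since this function vanishes at the equilibrium point and $M$ is connected, $\omega(X,X_i)\equiv 0$, and together with $\omega(X,X)=0$ this shows that $\alpha$ annihilates every $G$\--orbit, so $\int_\gamma\alpha=0$ for every loop $\gamma$ contained in a $G$\--orbit.

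\textbf{Step 2 (fibered structure and reduction to one fiber).} Next I would invoke the structure theory of symplectic torus actions with symplectic orbits for the $S$\--action: the symplectic orthocomplement $\Omega:=(\mathfrak s_M)^{\omega}$ of the orbit distribution is a smooth, $G$\--invariant, integrable rank\--$4$ distribution with $TM=\Omega\oplus\mathfrak s_M$ an $\omega$\--orthogonal splitting, the leaves of $\Omega$ are embedded compact symplectic $4$\--manifolds, the leaf space is a $2$\--torus $B$ on which $S$ acts transitively, and $M\to B$ is a symplectic fibration. Since $\{\varphi_t\}$ commutes with $S$ it preserves $\mathfrak s_M$, hence $\Omega$, hence descends to a flow on $B$ commuting with the transitive $S$\--action; such a flow is translation on $B\cong\T^2$, and having the image of the equilibrium point as a fixed point it must be trivial. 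Therefore $\{\varphi_t\}$ preserves each fiber $M_c$, restricting there to a symplectic circle action; the fiber through the equilibrium point carries a fixed point, and since $S$ permutes the fibers transitively and commutes with $\{\varphi_t\}$, every fiber carries a fixed point and the restricted actions are mutually $S$\--conjugate. Moreover, if the restricted action on one (hence every) fiber were Hamiltonian, normalizing the fiberwise moment maps to vanish on the fixed\--point sets would produce, by $S$\--conjugacy, a single $S$\--invariant $\mu\colon M\to\RM$ with $\DD\mu=\alpha$ --- the identity holding along $\Omega$ by the fiberwise statement, and along $\mathfrak s_M$ because both sides vanish by Step~1 and by $S$\--invariance of $\mu$ --- contradicting non\--Hamiltonicity. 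So the restricted circle action on $M_c$ is a non\--Hamiltonian symplectic circle action with non\--empty fixed\--point set on the compact symplectic $4$\--manifold $M_c$.

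\textbf{Step 3 (the log\--concavity obstruction; the crux).} It remains to rule out the configuration reached at the end of Step~2, and this is the step I expect to carry the difficulty, for one cannot simply quote McDuff's theorem here (we intend to recover it instead as Corollary~\ref{Mc1986}). The scheme I have in mind is the following: the equilibrium point furnishes, via the equivariant Darboux theorem, a local moment map near it whose associated local reduced\--volume (Duistermaat\--Heckman) density is automatically log\--concave, while the obstruction class $[\alpha]\neq 0$ --- equivalently, $[\alpha|_{M_c}]\neq 0$ on each fiber --- measures exactly the failure to assemble these local data, along the level sets of the resulting multivalued Hamiltonian, into a globally defined Duistermaat\--Heckman measure; from this data one then builds a genuinely Hamiltonian circle action on a compact symplectic $6$\--manifold admitting a complementary symplectic $2$\--torus with symplectic orbits (obtained, e.g., after passing to a suitable finite cover together with a $C^\infty$\--small Hamiltonian modification leaving the fixed\--point data untouched) whose Duistermaat\--Heckman measure is forced to be non\--log\--concave, contradicting the log\--concavity theorem. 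Identifying the correct auxiliary Hamiltonian model and proving rigorously that non\--exactness of $\alpha$ is incompatible with log\--concavity of its Duistermaat\--Heckman measure is the technical heart of the argument and the main obstacle.

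\textbf{Step 4 (conclusion).} Once Step~3 yields a contradiction, $\{\varphi_t\}_{t\in\RM}$ must be Hamiltonian, which is the assertion of the theorem. Finally, applying the theorem to $M^4\times\T^2$ --- with $\T^2$ acting on itself by translations, which has the whole of $\T^2$ as its single, symplectic, orbit --- yields the promised new proof of McDuff's theorem on symplectic circle actions on $4$\--manifolds, recorded as Corollary~\ref{Mc1986}.
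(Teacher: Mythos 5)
Your proposal has two genuine gaps, one structural and one at the acknowledged crux. First, Step 2 rests on a structure claim that is false in general: for a symplectic $S\cong\T^2$-action with symplectic orbits, the $\omega$-orthogonal complement $\Omega$ of the orbit distribution is a flat connection whose integral manifolds need not be compact or embedded, and there is in general no fibration $M\to B\cong\T^2$ with compact symplectic $4$-dimensional fibers. The correct model (Theorem \ref{tt}) is $M\cong\widetilde{M/S}\times_{\pi_1^{\rm orb}(M/S)}S$ over the $4$-dimensional orbit space $M/S$, and the monodromy homomorphism $\pi_1^{\rm orb}(M/S)\to S$ can have dense image, in which case the leaves of $\Omega$ are non-compact and the ``leaf space'' is not even Hausdorff (the Kodaira--Thurston type examples in Section \ref{sec5} illustrate this). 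So you cannot reduce to a fiberwise circle action on a compact symplectic $4$-manifold in the way you describe. Second, Step 3 --- which you yourself identify as the technical heart --- is not an argument but a plan: no auxiliary Hamiltonian model is constructed, and the assertion that non-exactness of $\alpha$ forces non-log-concavity of some Duistermaat--Heckman measure is exactly what would need to be proved. As it stands, the configuration you reach at the end of Step 2 is precisely the hypothesis of McDuff's theorem, and ruling it out is not easier than the original problem.

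For comparison, the paper's proof (of Theorem \ref{theorem1}, of which Theorem \ref{a} is the case $n=3$) avoids any fibration picture: using Rochon's rigidity result one may assume $[\omega]$ integral, so McDuff's construction gives a circle-valued (generalized) momentum map $\phi\colon M\to S^1$; on open intervals of regular values one performs symplectic cutting to obtain a compact Hamiltonian manifold to which Theorem \ref{log-concavity-result} applies, giving log-concavity of the Duistermaat--Heckman density there; and at the critical value coming from the fixed-point set the Guillemin--Lerman--Sternberg jump formula gives a strictly negative jump of the derivative, because the relevant fixed components are invariant under the commuting $\T^2$ and hence reduce to codimension-four pieces with the two normal weights of opposite sign. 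Strict log-concavity of a positive density on $S^1$ forces $\phi$ to be non-surjective, so it lifts to a genuine $\R$-valued momentum map and the flow is Hamiltonian. If you want to salvage your outline, Steps 2--3 should be replaced by this circle-valued momentum map and wall-crossing mechanism; your Step 1 is not needed, and your Step 4 (applying the theorem to $M^4\times\T^2$ to recover McDuff) is indeed how the paper derives Corollary \ref{Mc1986}, via Theorem \ref{complexity-one-mcduff}.
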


\begin{remark}
Theorem \ref{a} can be stated as follows.
Let the circle $S^1$ act
effectively and with non\--empty fixed point set on a
compact, connected, symplectic $6$\--dimensional manifold $(M,\omega)$. Suppose that there is an effective commuting symplectic action of a $2$\--torus $S$ on $M$ whose orbits are symplectic.
Then the action of $S^1$ on $(M,\, \omega)$ is Hamiltonian.
\end{remark}

Theorem \ref{a} is a particular case of the more general Theorem \ref{theorem1} below.

The problem of relating (i) and (ii) is still open in most cases, see Section \ref{remarks}.  In the present paper we study this problem
by studying another problem which is very interesting on its own right, and has generated a lot
of activity: the log\--concavity conjecture for Duistermaat\--Heckman density functions.

In what follows we assume that $n$ is an integer with $n\ge 2$ (though the case $n=2$ is almost always trivial).

\subsection*{Symplectic actions with transversal symplectic symmetry}

Consider the symplectic action of a torus $T$ on a compact, connected symplectic manifold $(M,\,\omega)$ of
dimension $2n$. The integer
$n-\text{dim}\, T$ is called the \emph{complexity of the $T$\--action}. In view of McDuff's example,
if the complexity of a symplectic $T$\--action on $M$ is two,
the action does not have to be Hamiltonian even if the fixed point set if non-empty.

Nevertheless, the following theorem asserts that if the action of $T$ has complexity two,
 and if there is a transversal toral symmetry with symplectic orbits,
then the action of $T$ must be Hamiltonian provided the fixed point set of the action is non-empty.

\begin{theorem} \label{theorem1}
Let $T$ be an $(n-2)$\--dimensional torus which acts
effectively with non\--empty fixed point set on a
compact, connected, symplectic $2n$\--dimensional manifold $(M,\omega)$. Suppose that there is an effective commuting
symplectic action of a $2$\--torus $S$ on $M$ whose orbits are symplectic.\footnote{the following statements are equivalent (\cite[Corollary 2.2.4]{P10}):
let $(M, \,\omega)$ be a compact connected symplectic manifold equipped
with an effective symplectic action of a $2$\--torus $S$: a) \emph{at least one} principal $S$\--orbit is a symplectic submanifold; b)
\emph{every} principal
$S$\--orbit is a symplectic submanifold; c)

\emph{at least one} $S$\--orbit is
a $2$\--dimensional symplectic submanifold; d)
\emph{every} $S$\--orbit is
a $2$\--dimensional symplectic submanifold; e) \emph{every} $S$\--orbit is a symplectic submanifold.}
Then the action of $T$ on $(M,\, \omega)$ is Hamiltonian.
\end{theorem}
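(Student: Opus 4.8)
The plan is to argue by contradiction, combining the logarithmic concavity property established in the first part of the paper with the structure theory of symplectic $2$-torus actions with symplectic orbits, and then lowering the dimension by symplectic reduction. The upshot will be that a hypothetical non-Hamiltonian direction produces, on suitable symplectic quotients, a circle action with nonzero flux but with a fixed point, and that such a configuration is incompatible with log-concavity of the Duistermaat--Heckman density.

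\textbf{First reduction.} Suppose the $T$-action were not Hamiltonian, and let $T_{\mathrm h}\subseteq T$ be the maximal subtorus acting in a Hamiltonian fashion, so that $T_{\mathrm h}\subsetneq T$. Choosing a circle $C\subseteq T$ whose image in $T/T_{\mathrm h}$ is a circle, the $C$-action is symplectic but not Hamiltonian, and since $M^{T}\subseteq M^{T_{\mathrm h}}\cap M^{C}$ each of these fixed-point sets is nonempty. Because $S$ commutes with $T$ it commutes with $T_{\mathrm h}$ and still acts with symplectic orbits, so the hypotheses under which log-concavity was proved apply to the Hamiltonian $T_{\mathrm h}$-action: fixing a moment map $\mu\colon M\to\mathfrak t_{\mathrm h}^{*}$, the Duistermaat--Heckman measure $\mu_{*}(\omega^{n}/n!)$ is log-concave, and by the Atiyah--Guillemin--Sternberg theorem its support is a convex polytope $\Delta$ on whose interior it has a density $f$ with $\log f$ concave.

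\textbf{Bringing in the transversal symmetry.} Next I would invoke the normal form for a symplectic $S$-action with symplectic orbits (the equivalences recorded in the footnote, \cite{P10}, together with the structural results of the earlier sections): up to a finite equivariant cover, $M$ is a flat symplectic $S$-bundle over the orbit space $B=M/S$, the distribution symplectically orthogonal to the $S$-orbits is integrable and basic, $(B,\omega_{B})$ is a compact symplectic manifold of dimension $2n-2$ carrying the induced $T$-action, and the horizontal part of $\omega$ equals $\pi^{*}\omega_{B}$. After adjusting $T$ inside $T\times S$ if necessary so that its generators are horizontal, one gets $[\iota_{\xi_{M}}\omega]=\pi^{*}[\iota_{\xi_{B}}\omega_{B}]$ with $\pi^{*}\colon H^{1}(B)\hookrightarrow H^{1}(M)$ injective, so $T$ is Hamiltonian on $M$ if and only if it is Hamiltonian on $B$; moreover $B^{T}\supseteq\pi(M^{T})\neq\emptyset$, the maximal Hamiltonian subtorus of the $T$-action on $B$ is again $T_{\mathrm h}$, this action has complexity one, and the Duistermaat--Heckman measure of $T_{\mathrm h}$ on $B$ agrees with the one on $M$ up to the constant symplectic area of a principal $S$-orbit, hence is still log-concave. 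The problem is thus reduced to the assertion that a symplectic complexity-one action on a compact connected symplectic manifold with nonempty fixed-point set whose maximal Hamiltonian subtorus has log-concave Duistermaat--Heckman measure must itself be Hamiltonian.

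\textbf{The contradiction, and where it is hard.} To finish, the strategy is to analyze the symplectic quotients $B_{c}=\mu_{B}^{-1}(c)/T_{\mathrm h}$ for regular values $c$ --- compact connected symplectic orbifolds on which the residual circle $C/(C\cap T_{\mathrm h})$ acts symplectically and, inheriting a nonzero flux class from the defect of $C$ on $B$, non-Hamiltonianly --- and to show that this non-Hamiltonicity, in the presence of the fixed points of $T$ (which label the faces of $\Delta$), is incompatible with $\log f$ being concave. Concretely, by the reduction description of the Duistermaat--Heckman density, concavity of $\log f$ near such a $c$ amounts to a Hodge-index (Brunn--Minkowski type) inequality on the family $\{B_{c}\}$, whereas the nonzero flux, anchored by the fixed-point data, forces the reverse inequality on an open set of parameters --- equivalently it forces $f$ to develop two separated local maxima along the residual direction --- contradicting log-concavity. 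This is the step I expect to be the main obstacle: it is exactly where the quantitative content of the logarithmic concavity conjecture is needed rather than mere convexity of $\Delta$, the delicate point being to tie the second derivative of $\log f$ along the non-Hamiltonian direction to a sign-constrained intersection pairing on the reduced spaces built out of the flux class and the fixed-point set. More routinely, one has to check that the normal form and finite-cover reductions preserve all the hypotheses and that Hamiltonicity descends along finite equivariant covers, and to treat separately the degenerate case $T_{\mathrm h}=\{1\}$ in which no reduction lowers the dimension --- this is, in essence, the circle case behind Theorem~\ref{a}.
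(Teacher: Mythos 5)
There is a genuine gap, in fact two. First, your opening reduction applies the log\--concavity theorem to the maximal Hamiltonian subtorus $T_{\mathrm h}$, but Theorem \ref{log-concavity-result} is proved only for a torus of dimension exactly $n-2$ acting on $M^{2n}$ (complexity two, with the transversal $S$\--symmetry): if $T_{\mathrm h}\subsetneq T$ the Hamiltonian action you invoke has complexity at least three, and no log\--concavity statement in the paper (or in general, by Karshon's counterexample) covers it. The paper avoids this by never passing to a subtorus: by Theorems \ref{m} and \ref{ro} one may assume the full $T^{n-2}$ admits a generalized $T^{n-2}$\--valued momentum map $\Phi$, and for each splitting $T^{n-2}=S^1\times H$ one studies the function $f(x)={\rm DH}_{T^{n-2}}(x,h)$ on the circle; over an interval of regular values one performs a symplectic cut to manufacture a \emph{compact Hamiltonian} manifold of complexity two to which Theorem \ref{log-concavity-result} genuinely applies. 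Your alternative route through $B=M/S$ has its own problems ($M/S$ is only a good orbifold, and the descent of the $T$\--action together with the equivalence ``Hamiltonian on $M$ iff Hamiltonian on $B$'' and the comparison of Duistermaat\--Heckman measures are asserted, not proved), but the complexity issue is the one that cannot be patched by routine care.

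Second, and more importantly, the step you yourself flag as ``the main obstacle'' is precisely the heart of the theorem, and you leave it unproved: you assert that nonzero flux together with fixed points must violate log\--concavity, but give no mechanism. The paper supplies this mechanism concretely. Since the $T$\--fixed set is nonempty, the circle\--valued momentum map $\phi$ (normalized to have no local extrema) has a critical value $a$ coming from a component $X$ of $M^{S^1}$; applying the Guillemin\--Lerman\--Sternberg jump formula (Theorem \ref{G-L-S88}) to the symplectic cut, the jump of $f'$ at $a$ is governed by the codimension of the reduced fixed components $X_h\subset M_h$. The commuting $S$\--action with symplectic orbits enters exactly here: by Lemma \ref{level-set} it descends to $M_h$, so $X_h$ contains a $2$\--torus orbit and must have codimension four (not six), whence $d=2$, the two nonzero weights have opposite signs, and the derivative jump is strictly negative. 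Strict log\--concavity at $a$, combined with log\--concavity on regular intervals, shows a positive function with these properties cannot exist on all of $S^1$, so $\phi$ is non\--surjective, lifts to $\R$, and the action is Hamiltonian. Without an argument of this kind (or a substitute for it), your proposal establishes only the framework, not the theorem; I would also ask you to note that your degenerate case $T_{\mathrm h}=\{1\}$ is not an edge case but, after your reduction, essentially the whole difficulty.
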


We will prove Theorem \ref{theorem1} using logarithmic concavity techniques and the structure
theory of symplectic actions. The proof combines several
tools developed by others, and which we adapt to the setting
of the paper: Lerman's symplectic cutting method \cite{Le95},
Yan's primitive
decomposition of differential forms \cite{Yan}, Graham's work on
logarithmic concavity of push\--forward measures
\cite{Gr96}, the Lerman\--Guillemin\--Sternberg jump formulas
\cite{GLS88}, and Hodge\--Riemann bilinear relations.

\subsection*{Logarithmic concavity of Duistermaat\--Heckman densities}

Our second theorem
is  a proof of the log\--concavity
conjecture of Duistermaat\--Heckman functions of Hamiltonian torus
actions with transversal toral symmetry with symplectic orbits.
We will prove this prior to proving Theorem \ref{theorem1} because
its proof uses this theorem.       In order to state it, consider the
momentum map $\mu \colon M \to \mathfrak{t}^*$ of the Hamiltonian $T$\--action
in Theorem \ref{theorem1}, where $\mathfrak{t}$ is the Lie algebra of $T$, and $\mathfrak{t}^*$ its
dual Lie algebra.

If $V\subset M$ is an open set, the \emph{Liouville measure on $M$} of $V$ is defined as
$$\int_V \frac{\omega^n}{n!}.$$ The
\emph{Duistermaat\--Heckman measure}  on $\frak{t}^*$ is the
push\--forward of the Liouville
measure on $M$ by $\mu$. The Duistermaat-Heckman measure is
absolutely continuous with respect to the Lebesgue measure, and
its density function\footnote{which is well defined once the normalization
of the Lebesgue measure is declared}
$${\rm DH}_{T} \colon \mathfrak{t}^* \to [0,\, \infty)
$$
is  the \emph{Duistermaat\--Heckman function}  (see  Section \ref{pre}).
Recall that if $V$ is a vector space, and if a Borel measurable function
$f: V\rightarrow \R$ is positive almost everywhere, we say that $f$ is \emph{log-concave} if
 its logarithm is a concave function.

\begin{theorem}\label{log-concavity-result}
 Let $T$ be an $(n-2)$\--dimensional torus which acts effectively
 and Hamiltonianly on a
compact, connected, symplectic $2n$\--dimensional
manifold $(M,\omega)$. Suppose that there is an effective commuting
symplectic action of a $2$\--torus $S$ on $M$ whose orbits are symplectic.
Then the Duistermaat-Heckman function ${\rm DH}_{T}$ of the
Hamiltonian $T$\--action is log-concave, i.e. its logarithm is a concave function.
\end{theorem}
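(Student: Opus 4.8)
\textbf{Proof strategy for Theorem \ref{log-concavity-result}.}

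The plan is to combine the structure theory of symplectic $2$\--torus actions with symplectic orbits with the classical criterion of Graham \cite{Gr96} that guarantees log\--concavity of a push\--forward measure once one exhibits a suitable family of closed forms restricting well to the fibers. First I would invoke the structure theorem for the $S$\--action: since the principal $S$\--orbits are symplectic $2$\--tori, there is an $S$\--invariant splitting of $TM$ along an open dense subset into the tangent distribution to the orbits and its symplectic orthocomplement, and the quotient of $M$ by $S$ carries an induced (orbifold) symplectic structure. Crucially, the Hamiltonian $T$\--action commutes with $S$, so it descends to the quotient, and the momentum map $\mu$ is $S$\--invariant and factors through the quotient. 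Thus the Duistermaat\--Heckman measure of the $T$\--action on $M$ is, up to a positive constant coming from the (locally constant) symplectic area of the $S$\--orbits, the push\--forward of the Liouville measure of the reduced symplectic orbifold under the reduced momentum map.

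Next I would set up Graham's criterion on the reduced space. The key analytic input is that, over the open set of regular values of the reduced momentum map, one can produce closed $2$\--forms $\sigma_\xi$ (depending linearly on the direction $\xi \in \mathfrak{t}$) whose restriction to each reduced fiber is the reduced symplectic form, and such that the mixed volume expression
\[
\xi \longmapsto \int_{\mu^{-1}(a)/T} \frac{(\sigma_\xi)^{\,?}}{?!}
\]
is, after taking logarithms, concave in $a$. Here is where Yan's primitive decomposition of differential forms \cite{Yan} enters: it lets one write powers of the relevant closed forms in terms of primitive components, and then the Hodge\--Riemann bilinear relations supply the sign\--definiteness (the crucial inequality) needed to conclude concavity of the logarithm of the fiber integrals. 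In effect, the Hodge\--Riemann relations on the reduced (K\"ahler\--type, or at least ``Hard Lefschetz'') orbifold fibers play the role that Alexandrov\--Fenchel\--type inequalities play in the toric case. One should also handle the behavior across the walls (non\--regular values) using the Lerman\--Guillemin\--Sternberg jump formulas \cite{GLS88}, which control how ${\rm DH}_T$ changes across a wall and, in particular, show that the density extends continuously and that local concavity on each chamber plus continuity across walls upgrades to global concavity.

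The main obstacle I anticipate is verifying that the reduced fibers actually satisfy the Hodge\--Riemann bilinear relations in the degree needed: the reduced spaces here are symplectic orbifolds, not a priori K\"ahler, so one cannot simply cite Hodge theory. I would expect the argument to exploit the very special structure of the $2$\--torus factor (complexity drops after reduction, and the fiber dimension is small --- essentially a $4$\--dimensional or lower situation where Hard Lefschetz and the Hodge index theorem are available by more elementary means, cf. the signature of the intersection form on a symplectic $4$\--manifold). A secondary technical point is dealing with the orbifold singularities of $M/S$ and with the non\--free locus of the $T$\--action simultaneously; Lerman's symplectic cutting \cite{Le95} is the natural device to reduce to compact smooth pieces on which the cohomological inequalities can be applied and then patched. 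Once these ingredients are in place, the concavity of $\log {\rm DH}_T$ on each chamber follows from Graham's theorem, and the jump formulas close the gap between chambers.
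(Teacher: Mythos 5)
Your proposal assembles the right list of tools, but it leaves unresolved exactly the step that carries all the weight, and the fallback you suggest for it does not work. In the paper the decisive chain is: the symplectic quotients $M_a=\mu^{-1}(a)/T$ are $4$\--dimensional symplectic orbifolds which inherit an effective symplectic $S$\--action with symplectic orbits (Lemmas \ref{level-set} and \ref{T2action-on-quotient}); by the structure theory of such actions (the model of \cite{P10} and the Duistermaat\--Pelayo construction, Lemma \ref{invariant-kahler-structure}) the regular part $M_a^{\rm reg}$ carries an invariant K\"ahler structure with K\"ahler form $\omega_a$; and — crucially — the Duistermaat\--Heckman variation $2$\--form $c$ is of type $(1,1)$ for this complex structure (Lemma \ref{key-lemma}, proved by writing the identification diffeomorphism $F$ in the model coordinates). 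Only then does Yan's decomposition $c=\gamma+s\omega_a$ together with Weil's identity $*\gamma=-\gamma$ for real primitive $(1,1)$\--forms give $\int_{M_a}\gamma^2=-\int_{M_a}\langle\gamma,\gamma\rangle\le 0$, which is the inequality $g''g-(g')^2\le 0$ on each chamber; Graham's Proposition \ref{Grahma-concavity} (the derivative\--jump inequality $g_+'\le g_-'$ from the Guillemin\--Lerman\--Sternberg formula, not mere continuity across walls, which by itself does not upgrade chamberwise concavity to global concavity) finishes the proof. You explicitly flag the K\"ahler/Hodge\--Riemann verification as "the main obstacle" and propose to get around it via the small dimension and "the signature of the intersection form on a symplectic $4$\--manifold". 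That does not suffice: pointwise primitivity of $\gamma$ alone does not force $\gamma\wedge\gamma\le 0$ (real $(2,0)+(0,2)$\--forms are primitive and self\--dual, with positive wedge\--square), and a cohomological Hodge\--index argument needs $b_2^+=1$, which fails for the reduced spaces occurring here (e.g. $\Sigma_g\times T^2$ has $b_2^+=1+2g$). Indeed, without the transversal $S$\--symmetry log\--concavity is simply false (Karshon's example), so the inequality cannot follow from soft $4$\--dimensional facts; the $(1,1)$\--property of $c$, obtained from the classification of symplectic $T^2$\--actions with symplectic orbits, is the essential missing idea in your write\--up.

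A secondary issue is that your opening reduction is different from the paper's and is then abandoned midstream: you push the Liouville measure forward to the orbifold $M/S$ and propose to work with the descended complexity\--one $T$\--action there, but the fiber integrals you then write (with unspecified exponents) are those of the paper's reduction $\mu^{-1}(a)/T$, and the Hodge\--Riemann discussion only makes sense for the latter. If you really carried out the $M/S$ route, the reduced spaces would be $2$\--dimensional, the Duistermaat\--Heckman function affine on chambers, and no Hodge theory would be needed — potentially an interesting simplification — but you would then owe orbifold versions of the Duistermaat\--Heckman theorem, the jump formulas, and Graham's inequality, plus a check that $\mu$ descends as a momentum map for an (essentially) effective action on $(M/S,\nu)$; none of this is addressed. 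As it stands, the proposal is a plausible outline of the paper's toolbox with the central lemma missing, not a proof.
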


The log\--concavity of the Duistermaat-Heckman function was proved by Okounkov \cite{Ok1996,Ok1997} for projective algebraic varieties, and
by Graham \cite{Gr96} for arbitrary compact K\"ahler manifolds. In view of these positive results, it was conjectured independently
by Ginzburg and Knudsen in 1990's that the
Duistermaat\--Heckman function of any Hamiltonian torus action on a compact symplectic manifold is log-concave.

However,
later Karshon \cite{Ka} gave an example of a
Hamiltonian circle action on a compact connected symplectic
$6$\--manifold with a non log\--concave
Duistermaat\--Heckman function, hence disproving the
conjecture in general. More recently, Lin found a general construction
of six dimensional Hamiltonian circle manifolds with non-log-concave Duistermaat\--Heckman functions, and established the log-concavity
conjecture for a class of Hamiltonian torus actions with complexity two \cite{Lin07}.

\subsection*{Applications: $S^1$\--actions and complexity\--$1$ actions}

We obtain, as a consequence of Theorems
\ref{theorem1} and \ref{log-concavity-result},  a theorem of Kim (Theorem \ref{complexity-one-mcduff}) that says  that if $(M,\, \omega)$ is a $2n$\--dimensional compact, connected, symplectic manifold,
then every effective symplectic action of an $(n-1)$\--dimensional torus on $(M,\omega)$ with non-empty fixed point set is Hamiltonian.  If $n=2$, this statement is due to McDuff
\cite{Mc1986} (see Corollary \ref{Mc1986}).  Our
proofs are conceptually different, and use different tools, from that of Kim and McDuff. Theorems
\ref{theorem1} and \ref{log-concavity-result} cover wider classes of manifolds,
see Example \ref{xxxxx}.

\paragraph{{\bf Structure of the paper.}}
The rest of the paper is devoted to preparing the ground for the proofs,
proving these two theorems, and exploring their applications.

In Section \ref{pre} we review the basic elements of symplectic manifolds and torus actions that we need in the remaining of the paper.

We prove Theorem \ref{log-concavity-result} in Section \ref{DH}, and we prove
Theorem \ref{theorem1} in Section \ref{MC} (the proofs of Theorems \ref{theorem1}
and \ref{log-concavity-result} rely mainly on: (1) the
 structure theory of non\--Hamiltonian symplectic torus actions \cite{P10};
 (2) the logarithmic convexity theorem for push\--forward measures \cite{Gr96}).

 In Section \ref{sec5} we explain how
our theorems can be used to derive  proofs of some well\--known results.

 In Section \ref{remarks}
we discuss some related results to these theorems.

\section{Duistermaat\--Heckman theory}  \label{pre}

This section gives a fast review of the necessary background to
read this paper.

\subsection*{Symplectic group actions}

Let $(M,\omega)$ be a symplectic manifold, i.e. the pair consisting of a smooth manifold $M$  and
a symplectic form $\omega$ on $M$ (a non\--degenerate closed $2$\--form on $M$).
Let $T$ be a torus, i.e. a compact, connected, commutative
Lie group  with Lie algebra $\mathfrak{t}$ ($T$ is isomorphic,
as a Lie group, to a finite product of circles $S^1$).

Suppose that $T$ acts on a symplectic manifold $(M,\, \omega)$  symplectically (i.e., by diffeomorphisms which preserve the symplectic form). We denote by $$(t,m) \mapsto t \cdot m$$ the action $T \times M \to M$ of $T$ on $M$.

Any element
$X \in \mathfrak{t}$ generates a vector field $X_M$
on $M$, called the \emph{infinitesimal generator}, given
by $$X_M(m):= \left.\frac{{\rm d}}{{\rm d}t}\right|_{t=0}
{\rm exp}(tX)\cdot m,$$ where ${\rm exp} \colon
\mathfrak{t} \to T$ is the exponential map of Lie theory and
$m \in M$. As usual, we write
$$
\iota_{X_M} \omega : = \omega(X_M, \cdot) \in \Omega^1(M)
$$
for the contraction $1$\--form. The  $T$-action on
$(M,\omega)$  is said to
be \emph{Hamiltonian} if there exists a smooth
invariant map $\mu
\colon M \to \mathfrak{t}^*$, called the \emph{momentum map},
such that for
all $X \in \mathfrak{t}$ we have that
\begin{eqnarray}
\iota_{X_M}
\omega ={\rm d} \langle \mu,
X \rangle, \label{oo}
\end{eqnarray}
where $\left\langle \cdot , \cdot
\right\rangle : \mathfrak{t}^\ast \times \mathfrak{t}
\rightarrow \mathbb{R}$ is the duality pairing.

\begin{remark}
Theorem \ref{theorem1} says that there exists a smooth map
invariant map $\mu \colon M \to \mathfrak{t}^*$ satisfying \textup{(\ref{oo})}, where $\mathfrak{t}^*$ is
the dual Lie algebra of $T^{n-2}$. \end{remark}

\begin{remark}
The existence
of $\mu$ is equivalent to the exactness of
the $1$\--forms $\iota_{X_M}\omega$ for all $X \in
\mathfrak{t}$. The obstruction to the action
to being Hamiltonian lies in the first de Rham cohomology
group ${\rm H}^1_{\rm dR}(M)$ of $M$, i.e. if
${\rm H}^1_{\rm dR}(M)$ is trivial, then any symplectic
$T$\--action on $M$ is Hamiltonian.
\end{remark}

We say that the $T$-action on $M$ \textit{has fixed points} if
\begin{eqnarray} \label{set}
M^T:=\{m \in M \mid t \cdot m = m, \,\, \textup{for all}\,\, t \in T \} \neq \varnothing.
\end{eqnarray}

The $T$\--action is \emph{effective} if the intersection of all stabilizer
subgroups $$T_m:=\{t \in T \,|\,
t \cdot m=m\}, \,\,m \in M,$$ is the trivial group. The $T$\--action
is \emph{free} if $T_m$ is  the trivial group
for all points $m \in N$.
The $T$\--action is \emph{semifree} if it is free on $M\setminus M^T$, where $M^T$ is given by (\ref{set}).
Note that if a $T$\--action is semifree, then it either leaves every point in $M$ fixed, or it is effective.
Finally, the action of $T$  is called quasi\--free if the stabilizer
subgroup  of every point are connected.

\begin{example}
The simplest example of a Hamiltonian torus action is given by the standard symplectic
sphere $S^2$ with the
rotational $S^1$\--action. This action is also
effective and semifree. It is easy to
check that the momentum map for this action is the height function
$\mu(\theta, \,h)= h$.
\end{example}

\subsection*{Atiyah\--Guillemin\--Sternberg convexity}

We will need the following classical result.

\begin{theorem}[Atiyah \cite{A82}, Guillemin and Sternberg
  \cite{GS82}] \label{gs} If a torus $T$ with Lie algebra $\mathfrak{t}$ acts on a
  compact, connected $2n$\--dimensional symplectic manifold $(M,\,
  \omega)$ in a Hamiltonian fashion, then the image $\mu(M)$ under the
  momentum map $\mu \colon M \to \mathfrak{t}^*$ of the action is a
  convex polytope $\Delta \subset \mathfrak{t}^*$.
\end{theorem}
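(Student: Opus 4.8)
The plan is to follow Atiyah's original strategy: extract from the Hamiltonian hypothesis (\ref{oo}) enough Morse-theoretic structure to force connectedness of the momentum map fibers, and then deduce convexity and the polytope property from that. First I would fix $X \in \mathfrak{t}$ and study the component $\mu^X := \langle \mu, X\rangle \colon M \to \mathbb{R}$, which by (\ref{oo}) is a Hamiltonian for the (closure of the) one\--parameter subgroup generated by $X$. Using the equivariant Darboux / local normal form theorem near a point of $M^T$, where the action linearizes and $\omega$ becomes standard, one checks that $\mu^X$ is a Morse\--Bott function whose critical set is the fixed point set of the subtorus $\overline{\exp(\mathbb{R}X)}$ --- a finite union of compact connected symplectic submanifolds --- and that the negative and positive normal bundles at each critical manifold have even rank (the weights of the linearized action occur in $\pm$ pairs on each symplectic plane). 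In particular no critical manifold has index $1$ or coindex $1$.

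The combinatorial heart of the argument is the Morse\--Bott lemma: a Morse\--Bott function on a compact connected manifold no critical submanifold of which has index $1$ or coindex $1$ has connected (or empty) level sets $\{\mu^X = c\}$ and connected sublevel sets $\{\mu^X \le c\}$; the point is that crossing a critical value cannot break a level set into pieces nor create a new component when the relevant index is $\neq 1$. Consequently $\mu^X$ attains a unique maximum value, on a connected critical manifold, and likewise a unique minimum value. I would then prove connectedness of every fiber $\mu^{-1}(\xi)$, $\xi \in \mathfrak{t}^*$, by induction on $\dim T$: the case $\dim T = 1$ is exactly the statement just made; for the inductive step, split $\mathfrak{t} = \mathfrak{t}' \oplus \mathbb{R}X$, observe that a generic fiber of $\mu_{T'}$ is a connected symplectic submanifold on which the circle generated by $X$ acts Hamiltonianly with Hamiltonian the restriction of $\mu^X$, apply the $\dim T = 1$ case there, and reach the non\--generic values by a limiting argument using compactness of $M$.

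Convexity then follows formally. For a rational direction in $\mathfrak{t}^*$ --- say the annihilator of a corank\--one subtorus $T' \subset T$ --- the intersection of $\mu(M)$ with any affine line in that direction is, via connectedness of the fibers of $\mu_{T'}$ together with the circle case applied on such a fiber, a single interval; since rational directions are dense in the set of directions and $\mu(M)$ is compact, this forces $\mu(M)$ to be convex. Finally, $M^T$ has finitely many connected components (compactness), $\mu$ is constant on each of them, so $\mu(M^T)$ is a finite set; a local normal form computation at each fixed point exhibits $\mu(M)$ near $\mu(m)$ as a finite intersection of rational half\--spaces, and combined with convexity and compactness this shows $\mu(M)$ is the convex hull of the finite set $\mu(M^T)$, hence a convex polytope $\Delta$. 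I expect the genuine obstacle to be the inductive connectedness\--of\--fibers step --- pinning down the ``index $\neq 1$ implies connected level sets'' lemma and controlling the passage from generic to arbitrary values of $\mu_{T'}$ --- rather than the convexity and polytope deductions, which are comparatively routine once connectedness is available.
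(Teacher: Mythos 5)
The paper does not actually prove this statement: Theorem \ref{gs} is quoted as classical background, with the proof deferred to Atiyah \cite{A82} and Guillemin--Sternberg \cite{GS82}, so there is no in-paper argument to compare yours against. What you have written is, in outline, Atiyah's original proof: Morse--Bott theory for the components $\mu^X$, evenness of index and coindex of the critical manifolds, connectedness of level sets, induction on $\dim T$ for connectedness of the fibers of $\mu$, and then the formal passage from fiber connectedness to convexity and to the polytope description via the local normal form at fixed points. That is the right strategy, and the convexity/polytope endgame is described correctly at sketch level.

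There is, however, a concrete misstep in the inductive step, exactly where you predicted the difficulty would lie. A regular fiber $\mu_{T'}^{-1}(c)$ is \emph{not} a symplectic submanifold: it is coisotropic of codimension $\dim T-1$, the restriction of $\omega$ degenerating along the $T'$-orbit directions, so you cannot literally ``apply the $\dim T=1$ case'' to the circle generated by $X$ acting on that fiber. Atiyah's fix is to analyze the restriction of $\mu^X$ to $\mu_{T'}^{-1}(c)$ directly: its critical points are the points where $\mathrm{d}\mu^X$ lies in the span of the $\mathrm{d}\mu^{Y}$, $Y\in\mathfrak{t}'$, hence lie on critical manifolds of combinations $\mu^{X+Y}$, and one shows these meet the fiber cleanly so that the restricted function is again Morse--Bott with even index and coindex; alternatively one may pass to the reduced space $\mu_{T'}^{-1}(c)/T'$, which is symplectic but in general only an orbifold, and invoke an orbifold version of the circle case. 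Either way this is a lemma you must state and prove, not a direct quotation of the case $\dim T=1$. Similarly, the ``limiting argument using compactness'' for non-regular values of $\mu_{T'}$ needs to be an actual argument (for instance, writing the fiber as a nested intersection of compact sets $\mu^{-1}(\overline{B}_\epsilon)$ and proving each of these is connected); as stated it is a placeholder. With those two points filled in along Atiyah's lines, the proof goes through.
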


\subsection*{Reduced symplectic forms and Duistermaat\--Heckman formula}

Consider the Hamiltonian action of a torus $T$ on a symplectic manifold $(M, \omega)$. Let $a\in\frak{t}^*$ be a
regular value of the momentum map $\mu : M \rightarrow \frak{t}^*$ of the action. When the action of
$T$ on M is not quasi-free, the quotient $$M_a:= \mu^{-1}(a)/T$$ taken at a
regular value of the momentum map is not a smooth manifold in general.
However,  $M_a$ admits a smooth orbifold structure
in the sense of Satake \cite{Satake}.

Even though smooth orbifolds are not
necessarily smooth manifolds, they carry differential structures such as differential
forms, fiber bundles, etc. In fact, the usual definition of symplectic
structures extends to the orbifold case. In particular, the restriction
of the symplectic form $\omega$  to the fiber $\mu^{-1}(a)$ descends to a symplectic
form $\omega_a$ on the quotient space $M_a$, cf. Weinstein \cite{Weinstein}.
We refer to \cite{ALR} and \cite{MO} for a modern account of basic notions in
orbifold theory, and to \cite[Appendix]{P10} for the basic definitions in the context
of symplectic geometry.

We are ready to state an equivariant version of the celebrated Duistermaat-Heckman theorem.

\begin{theorem}[Duistermaat\--Heckman \cite{DH1982}]\label{DHtheorem}
Consider an effective Hamiltonian action of a
$k$\--dimensional torus $T^k$ on a compact, connected, $2n$\--dimensional symplectic
manifold $(M,\omega)$ with momentum map $\mu : M\rightarrow \frak{t}^*$.
Suppose that there is another symplectic action of $2$\--torus $T^2$ on $M$ that commutes with
the action of $T^k$. Then the following hold:
\begin{itemize}
\item [{\rm (a)}] at a regular value $a\in \mathfrak{t}^*$ of $\mu$, the Duistermaat-Heckman function
is given by
$$ {\rm DH}_{T^k}(a) =\int_{M_a}\dfrac{\omega_a^{n-k}}{(n - k)!},
$$
where $M_a:= \mu^{-1}(a)/T^k$ is the symplectic quotient, $\omega_a$ is the corresponding
reduced symplectic form, and $M_a$ has been given the
orientation of $\omega_a^{n-k}$;

\item[{\rm (b)}] if $a, a_0 \in \frak{t}$ lie in the same connected component of the set of regular
values of the momentum map $\mu$, then there is a $T^2$\--equivariant diffemorphism
$
F \colon M_a \longrightarrow M_{a_0},
$
where
$M_{a_0}:= \mu^{-1}(a_0)/T^k$ and $M_{a}:= \mu^{-1}(a)/T^k$. Furthermore,  using
$F$ to identify $M_a$ with $M_{a_0}$, the reduced symplectic form
on $M_a$ may be identified with
$$
\omega_a = a_0+  \langle c, a - a_0 \rangle,
$$
where $c\in \Omega^2(M, \frak{t^*})$ is a closed $\frak{t}^*$\---valued two\--form representing
the Chern class of the principal torus bundle $\mu^{-1}(a_0)\rightarrow M_{a_0}$.
\end{itemize}
\end{theorem}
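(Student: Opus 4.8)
The statement is the equivariant form of the classical Duistermaat--Heckman theorem: part (a) is the density formula for the push\--forward measure, and part (b) is the linear variation of the reduced symplectic form; the only genuinely new point is that the diffeomorphism $F$ in (b) be $T^2$\--equivariant. The plan is therefore to recall the classical arguments and carry them out invariantly with respect to the compact group $T^k\times T^2$. I would begin with the preliminary observation that the momentum map $\mu$ is $T^2$\--invariant: for $X\in\mathfrak{t}$ and $\phi$ in the $T^2$\--action, $\phi$ preserves $\omega$ and commutes with the flow of $X_M$, so $\phi^*(\iota_{X_M}\omega)=\iota_{X_M}\omega$, whence $\phi^*\langle\mu,X\rangle-\langle\mu,X\rangle$ is constant in $m\in M$ and, as a function of $\phi$, a continuous homomorphism $T^2\to(\mathbb{R},+)$, hence zero. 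Consequently $T^2$ preserves every level set $\mu^{-1}(a)$, commutes there with $T^k$, and descends to a symplectic action on each reduced space $M_a$; at a regular value $a$ the projection $\pi_a\colon\mu^{-1}(a)\to M_a$ is a principal $T^k$\--orbibundle on which $T^2$ acts by bundle automorphisms covering its action on $M_a$. All of the geometry below takes place in the orbifold category.

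For (a) I would fix a regular value $a$, a ball $B\subset\mathfrak{t}^*$ of regular values around it, and a $T^k\times T^2$\--invariant connection one\--form on $\pi_a$, and invoke the equivariant coisotropic neighbourhood theorem to get a $T^k\times T^2$\--equivariant identification of a neighbourhood of $\mu^{-1}(a)$ with $\mu^{-1}(a)\times B$ under which $\mu$ is the projection to $B$. In such a normal form the Liouville density $\omega^n/n!$ factors, after integrating along the $T^k$\--orbit directions (whose volume is fixed once the Lebesgue measure on $\mathfrak{t}^*$ is normalised), as $\bigl(\omega_b^{\,n-k}/(n-k)!\bigr)\,db$ on $M_b$ for $b\in B$; pushing forward by $\mu$ yields ${\rm DH}_{T^k}(b)=\int_{M_b}\omega_b^{\,n-k}/(n-k)!$ with the orientation induced by $\omega_b^{\,n-k}$, which is (a).

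For (b) I would fix a regular value $a_0$ in a connected component $C$ of the set of regular values together with a $T^k\times T^2$\--invariant connection one\--form $\theta$ on $\pi_{a_0}$; its curvature descends to a closed $T^2$\--invariant two\--form $c$ on $M_{a_0}$ representing the Chern class of $\pi_{a_0}$. The equivariant Duistermaat--Heckman normal form built from $\theta$ gives, for $a$ near $a_0$, a $T^k\times T^2$\--equivariant diffeomorphism $\mu^{-1}(a)\to\mu^{-1}(a_0)$ which descends to a $T^2$\--equivariant $F\colon M_a\to M_{a_0}$ with $\omega_a=\omega_{a_0}+\langle c,\,a-a_0\rangle$. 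To globalise over $C$ I would run the standard connectedness argument: $C$ is open in an affine space, hence path\--connected, and composing the local equivariant identifications along a path from $a_0$ to an arbitrary $a\in C$ --- adjusting by a Moser isotopy whose generating vector field is averaged over $T^k\times T^2$ --- produces the required $T^2$\--equivariant $F$ and the linear formula. Equivalently, $\mu\colon\mu^{-1}(C)\to C$ is a proper, $T^k\times T^2$\--invariant submersion, so an equivariant Ehresmann argument trivialises it globally in one step.

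The main obstacle is the \emph{equivariant} coisotropic neighbourhood / Duistermaat--Heckman normal form in the orbifold setting: one must verify that the classical ingredients --- choice of connection, the coisotropic identification of a neighbourhood of a level set, and the Moser isotopy straightening $\omega$ --- can all be made simultaneously invariant under $T^k\times T^2$. This is essentially a matter of averaging over the compact group at each stage and checking that averaging preserves the needed nondegeneracy and connection properties; that the $T^2$\--action is only symplectic and not necessarily Hamiltonian is irrelevant, since the argument uses only that it commutes with $T^k$ and preserves $\omega$. What remains is bookkeeping: handling the orbibundle $\mu^{-1}(a)\to M_a$ when the $T^k$\--action is not quasi\--free, and pinning down normalisations (the Lebesgue measure on $\mathfrak{t}^*$, the orientations, and the precise closed representative $c$ of the Chern class) so that the displayed identities hold on the nose rather than merely in cohomology.
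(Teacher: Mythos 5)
Your proposal is sound, but note that the paper does not prove this statement at all: it is quoted as the (equivariant form of the) classical Duistermaat--Heckman theorem and attributed to \cite{DH1982}. The only ingredient the paper verifies on its own is your preliminary observation that the commuting symplectic $T^2$\--action preserves the fibres of $\mu$; this is Lemma \ref{level-set}, proved there by showing $\mathcal{L}_{X_M}\mu^Y$ is constant and vanishes at a zero of $Y_M$ (such a zero exists because the $T^k$\--action is Hamiltonian and $M$ is compact), whereas you argue via the vanishing of a continuous homomorphism $T^2\to(\R,+)$; both arguments are correct. Your route for (a) and (b) --- the classical coisotropic/Duistermaat--Heckman normal form made $T^k\times T^2$\--invariant by averaging the connection, the primitive in the Moser step, and the Ehresmann trivialization over a chamber --- is exactly the standard way the equivariant refinement is obtained, and the orbifold bookkeeping you flag (locally free rather than free $T^k$\--action on the level sets) is genuine but harmless. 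One caveat worth making explicit: over an entire connected component of regular values the displayed identity $\omega_a=\omega_{a_0}+\langle c,a-a_0\rangle$ (where $a_0$ in the paper's display is evidently a typo for $\omega_{a_0}$) should be read either cohomologically or as holding for the specific identification produced by your globalized equivariant construction with a fixed invariant connection; the paper only ever uses the cohomological consequence, e.g.\ when evaluating $\int_{M_a}(\omega_a+tc)^2$ in the proof of Theorem \ref{log-concavity-result}, together with the $T^2$\--equivariance of $F$ in Lemma \ref{key-lemma}, so your construction supplies everything that is actually needed.
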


\begin{example} \label{paa}
Under the conditions of Theorem \ref{a}, Theorem \ref{log-concavity-result} says that
the Duistermaat\--Heckman function
$$
a \mapsto \frac{1}{2} \int_{M_a} \omega_a^{2},
$$
is log\--concave,  where $M_a$ is the symplectic quotient $\mu^{-1}(a)/S^1$  and $\omega_a$ is the reduced symplectic form inducing the orientation on $M_a$.
\end{example}

\subsection*{Primitive decomposition of differential forms}

Here we assume that $(X,\omega)$ is a $2n$\--dimensional symplectic orbifold.
Let $\Omega^{k}(X)$ be the space of differential forms of degree $k$ on $X$. Let $\Omega(X)$ be the space corresponding to the
collection of all $\Omega^{k}(X)$, for varying $k$'s.

We note that, in analogy with the case of symplectic manifolds, there are three natural operators
$\Omega(X)$  on given as follows:
\[\begin{split} &{\rm L}: \Omega^{k} (X) \rightarrow \Omega^{k+2}(X),\,\,\,\alpha\mapsto \omega_a\wedge \alpha,\\
& \Lambda: \Omega^{k}(X)\rightarrow \Omega^{k-2}(X),\,\,\,\alpha \mapsto \iota_{\pi}\alpha,\\
&{\rm H}: \Omega^{k}(X)\rightarrow \Omega^{k}(X),\,\,\,\alpha\mapsto (n-k)\alpha,\end{split}\]
 where $\pi=\omega^{-1}$ is the Poisson bi-vector induced by the symplectic form $\omega$. These
 operators satisfy the following bracket relations.
 \[ [\Lambda ,{\rm L}]={\rm H}, \,\,[{\rm H},\Lambda]=2\Lambda,\,\,\,[{\rm H},{\rm L}]=-2{\rm L}. \]
 Therefore they define a representation of the Lie algebra ${\rm sl}(2)$ on $\Omega(X)$. A primitive differential form
 in $\Omega(X)$ is by definition a highest weight vector in the $sl_2$-module $\Omega(X)$. Equivalently, for
 any integer $0\leq k\leq n$,  we say that a differential
 $k$-form $\alpha$ on $X$ is primitive if and only if
 \[ \omega^{n-k+1}\wedge \alpha=0.\]

 Although the ${\rm sl}_2$-module $\Omega(X)$ is infinite dimensional, there are only finitely many eigenvalues of ${\rm H}$.
 There is a detailed study of ${\rm sl}_2$-modules of this type in \cite{Yan}. The following result
 is an immediate consequence of \cite[Corollary 2.5]{Yan}.

\begin{lemma} \label{yan's-result}
A differential form
$\alpha \in \Omega^k(X)$ admits a unique primitive decomposition
\begin{equation}\label{lefschetz-decompose-forms} \alpha =\displaystyle \sum_{r\,\,\geq\,\,
\max(k-n, 0)} \dfrac{{\rm L}^r}{r!}\, \beta_{k-2r},\end{equation}
where $\beta_{k-2r}$ is a primitive form of degree $k-2r$.

\end{lemma}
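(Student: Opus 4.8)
\textbf{Proof proposal for Lemma \ref{yan's-result}.}

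The plan is to derive the statement directly from the representation theory of $\mathrm{sl}(2)$ applied to the module $\Omega(X)$, exactly as one does in the manifold case (Weil, or Huybrichs's book), but keeping track of the fact that $X$ is an orbifold. First I would observe that the only property used is that $\Omega(X)$ is an $\mathrm{sl}(2)$-module of \emph{finite H-spectrum}: the operator $\mathrm{H}$ acts on $\Omega^k(X)$ as the scalar $n-k$, so its eigenvalues range over the finite set $\{-n,-n+1,\dots,n\}$. This is the hypothesis under which \cite[Corollary 2.5]{Yan} is stated; orbifold differential forms satisfy it because $\Omega^k(X)=0$ for $k<0$ and $k>2n$, the pointwise linear-algebra identities $[\Lambda,\mathrm{L}]=\mathrm{H}$, $[\mathrm{H},\Lambda]=2\Lambda$, $[\mathrm{H},\mathrm{L}]=-2\mathrm{L}$ are the standard symplectic-linear-algebra relations (valid on each orbifold chart and invariant under the local uniformizing group, hence globally valid on $X$), and $\mathrm{L}=\omega_a\wedge(-)$, $\Lambda=\iota_\pi(-)$ commute with the natural $\mathbb{Z}$-grading in the expected way.

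Next I would recall the structural statement for such $\mathrm{sl}(2)$-modules: any element decomposes uniquely as a sum of $\mathrm{L}^r$ applied to primitive (highest-weight, i.e.\ $\Lambda$-closed) vectors, where a vector $\beta$ of degree $j\le n$ is primitive iff $\mathrm{L}^{n-j+1}\beta=0$, equivalently $\Lambda\beta=0$. Applying this degree by degree to $\alpha\in\Omega^k(X)$ and matching degrees: a term $\mathrm{L}^r\beta_{k-2r}$ lands in degree $k$ precisely when $\beta_{k-2r}$ has degree $k-2r$, and primitivity forces $k-2r\le n$, i.e.\ $r\ge\max(k-n,0)$; the lower bound $r\ge 0$ is automatic. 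This yields the asserted range of summation and the normalization $\mathrm{L}^r/r!$ is just a convenient choice of constants. Uniqueness follows from the fact that in a finite-$\mathrm{H}$-spectrum $\mathrm{sl}(2)$-module the primitive components of a given vector are determined (one can solve for the $\beta_{k-2r}$ recursively by repeatedly applying $\Lambda$ and using $[\Lambda,\mathrm{L}^r]=r\,\mathrm{L}^{r-1}(\mathrm{H}-r+1)$).

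The only genuine subtlety — and the step I would flag as the main point to be careful about — is verifying that \cite[Corollary 2.5]{Yan} applies verbatim in the orbifold category, i.e.\ that nothing in Yan's argument uses more than the abstract $\mathrm{sl}(2)$-module structure together with finiteness of the $\mathrm{H}$-spectrum. Since Yan's result is purely algebraic once those inputs are granted, and both inputs hold for $\Omega(X)$ with $X$ a symplectic orbifold (all the relevant identities being pointwise/local and equivariant under the finite uniformizing groups, hence descending to $X$), the lemma follows immediately. I would therefore present the proof as: (1) note the three operators and their bracket relations make $\Omega(X)$ an $\mathrm{sl}(2)$-module; (2) note $\mathrm{H}$ has finite spectrum since $\Omega^k(X)=0$ outside $0\le k\le 2n$; (3) invoke \cite[Corollary 2.5]{Yan} and read off the degree constraints. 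No long computation is needed.
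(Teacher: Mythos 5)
Your proposal is correct and follows essentially the same route as the paper, which simply observes that $\mathrm{L}$, $\Lambda$, $\mathrm{H}$ give $\Omega(X)$ the structure of an $\mathrm{sl}(2)$-module with finitely many $\mathrm{H}$-eigenvalues and then cites \cite[Corollary 2.5]{Yan}, whose hypotheses are purely algebraic and hence apply verbatim in the orbifold setting. Your additional remarks on uniqueness and the degree range $r\ge\max(k-n,0)$ are consistent elaborations of that same argument.
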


\section{Proof of  Theorem \ref{log-concavity-result}}
\label{DH}

The proof of Theorem \ref{log-concavity-result} combines ingredients coming from
several directions:
reduction theory for Hamiltonian group actions;
structure theory for symplectic group actions with symplectic orbits;
complex Duistermaat\--Heckman theorem;
Lerman\--Guillemin\--Sternberg wall jump formulas;
Hodge\--Riemann bilinear relations.
We will first explain the particular form in which we need to use these ingredients
above, and then how to combine them to prove the theorem.

\subsection*{Toric fibers and reduction ingredients}

We start with the following observations.

\begin{lemma}\label{level-set} Let a torus $T$ with Lie algebra $\mathfrak{t}$ act
on a compact, connected, symplectic manifold $(M, \omega)$ in a Hamiltonian fashion with momentum map
$\mu: M\rightarrow \frak{t}^*$. Suppose that the action of another torus $S$ on
$M$ is symplectic and commutes with the action of $T$.
Then for any $a\in \frak{t^*}$, the action of $S$ preserves the level set $\mu^{-1}(a)$ of the momentum map.
\end{lemma}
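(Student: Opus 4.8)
The plan is to show that the momentum map $\mu$ is invariant under the $S$-action, which immediately gives that $S$ preserves each level set $\mu^{-1}(a)$. The key structural fact to exploit is that $S$ acts symplectically and commutes with $T$; commutativity means that for every $s \in S$ and every $X \in \mathfrak{t}$ the diffeomorphism $s$ intertwines the infinitesimal generator $X_M$ with itself, i.e. $s_* X_M = X_M$. I would first fix $X \in \mathfrak{t}$ and consider the function $\langle \mu, X\rangle$ together with its pullback $s^*\langle\mu, X\rangle$ under an element $s \in S$.

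First I would compute $\mathrm{d}\bigl(s^*\langle \mu, X\rangle\bigr) = s^*\,\mathrm{d}\langle\mu,X\rangle = s^*(\iota_{X_M}\omega)$, using the defining relation \eqref{oo}. Since $s$ preserves $\omega$ (the $S$-action is symplectic) and since $s_* X_M = X_M$ by commutativity of the two actions, a routine naturality computation gives $s^*(\iota_{X_M}\omega) = \iota_{X_M}\omega = \mathrm{d}\langle\mu,X\rangle$. Hence $\mathrm{d}\bigl(s^*\langle\mu,X\rangle - \langle\mu,X\rangle\bigr) = 0$, so $s^*\langle\mu,X\rangle$ and $\langle\mu,X\rangle$ differ by a locally constant function on $M$; since $M$ is connected, this is a genuine constant $c(s,X)$. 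The next step is to show this constant vanishes. Here I would use connectedness of $S$: the map $s \mapsto c(s,X)$ is continuous, is additive in $s$ (because $S$ is abelian and both actions are by symplectomorphisms, one checks $c(s_1 s_2, X) = c(s_1,X) + c(s_2,X)$), and vanishes at the identity; a continuous additive $\mathbb{R}$-valued function on the compact connected group $S$ must be identically zero. Therefore $s^*\langle\mu,X\rangle = \langle\mu,X\rangle$ for all $s\in S$ and all $X\in\mathfrak{t}$, which says precisely that $\mu$ is $S$-invariant.

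Finally, $S$-invariance of $\mu$ gives $\mu(s\cdot m) = \mu(m)$ for all $m \in M$, $s\in S$, so if $\mu(m) = a$ then $\mu(s\cdot m) = a$, i.e. $S$ maps $\mu^{-1}(a)$ into itself; applying the same to $s^{-1}$ shows it is a bijection of the level set. The main obstacle, such as it is, is the vanishing of the constant $c(s,X)$: one must be careful to extract additivity in the group variable and then invoke compactness/connectedness of $S$, rather than hoping the constant is zero for trivial reasons. (Alternatively, one can bypass this by noting $\langle\mu,X\rangle$ attains its maximum on the compact $M$, and $s^*\langle\mu,X\rangle$ attains the same maximum value, forcing the additive constant to be zero.) Everything else is a direct manipulation of the identity \eqref{oo} together with the hypotheses that $S$ acts symplectically and commutes with $T$.
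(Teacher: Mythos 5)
Your proposal is correct and takes essentially the same route as the paper: both arguments show that each component $\langle\mu,X\rangle$ changes only by a constant under the $S$-action (the paper works infinitesimally, proving $\mathrm{d}\,\mathcal{L}_{X_M}\mu^{Y}=0$ via Cartan's formula and symplecticness of $S$, while you work with the pullback by a group element), and then kill that constant. The only real difference is the final step: the paper evaluates the constant $\omega(Y_M,X_M)$ at a point where $Y_M$ vanishes (such a point exists because the $T$-action is Hamiltonian and $M$ is compact), whereas you invoke the continuous-homomorphism-into-$\mathbb{R}$ argument or the equal-maximum argument on compact $M$; both are valid and of comparable difficulty.
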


\begin{proof} Let $X$ and $Y$ be two vectors in the Lie algebra of $S$ and $T$ respectively, and let
$X_M$ and $Y_M$ be the vector fields on $M$ induced by the infinitesimal action of $X$ and $Y$ respectively.  Let $\mathcal{L}_{X_M}\mu^Y$ denote the Lie derivative
of $$\mu^Y:=\langle \mu,\, Y\rangle \colon M \to \mathbb{R}$$ with respect to $X_M$.
Let $[\cdot,\,\cdot]$ denote the Lie bracket on vector fields.

 It suffices to show that $\mathcal{L}_{X_M}\mu^Y=0$.

 Since the action of $T$ and $S$ commute, we have that
 $$[X_M,Y_M]=0.$$
 By the Cartan identities, we have
 \[\begin{split} 0&=\iota_{[X_M,Y_M]}\omega=\mathcal{L}_{X_M}\iota_{Y_M}\omega-\iota_{Y_M}\mathcal{L}_{X_M}\omega
 \\&= \mathcal{L}_{X_M}\iota_{Y_M}\omega \,\,\,\,\,\,(\text{because the action of $S$ is symplectic.})
  \\&=\mathcal{L}_{X_M}({\rm d}\mu^Y)={\rm d}\mathcal{L}_{X_M}\mu^Y.  \end{split}\]

This proves that $\mathcal{L}_{X_M}\mu^Y$ must be a constant. On the other hand, we have
\[  \mathcal{L}_{X_M}\mu^Y=\iota_{X_M}{\rm d}\mu^Y
=\iota_{X_M}\iota_{Y_M}\omega
=\omega(Y_M,X_M).\]

Since the action of $T$ is Hamiltonian and the underlying manifold $M$ is compact, there must exist a point $m\in M$ such that $Y_M$
vanishes at $m$. Thus $$(\mathcal{L}_{X_M}\mu^Y)(m)=0,$$ and therefore
$\mathcal{L}_{X_M}\mu^Y$ is identically equal to zero on $M$.
\end{proof}

\begin{remark}\label{level-set-remark} It is clear from the above argument that if the action of $T$ is symplectic with a generalized momentum map, cf. Definition
\ref{generalized-momentum-map}, and if the fixed point set of the $T$-action is non-empty, then the assertion of Lemma \ref{level-set} still holds.

\end{remark}

\begin{lemma}\label{T2action-on-quotient}
 Let an $(n-2)$\--dimensional torus $T^{n-2}$ with Lie algebra $\mathfrak{t}$ act effectively
on a compact connected symplectic
manifold $(M, \omega)$ in a Hamiltonian fashion with momentum map
$\mu: M\rightarrow \frak{t}^*$.
Suppose that there is an effective commuting $T^2$\--action on $(M,\omega)$ which has a symplectic orbit.
 Then for any  regular value $a\in \frak{t}^*$, the symplectic quotient
$$M_a:=\mu^{-1}(a)/T^{n-2}$$ inherits an effective symplectic $T^2$\--action with symplectic orbits.
\end{lemma}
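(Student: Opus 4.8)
The plan is to verify, step by step, that the residual $T^2$-action on the symplectic quotient $M_a := \mu^{-1}(a)/T^{n-2}$ is well defined, symplectic, effective, and has symplectic orbits. First I would invoke Lemma \ref{level-set}: since the $T^2$-action commutes with the Hamiltonian $T^{n-2}$-action, it preserves each level set $\mu^{-1}(a)$. Because the two actions commute, the $T^2$-action on $\mu^{-1}(a)$ descends to an action on the quotient $M_a$, which is a symplectic orbifold in the sense recalled in Section \ref{pre}, carrying the reduced form $\omega_a$. That the descended action preserves $\omega_a$ is immediate from the fact that $T^2$ preserves $\omega$ and that $\omega_a$ is characterized by $\pi_a^* \omega_a = \iota_a^* \omega$, where $\iota_a \colon \mu^{-1}(a) \hookrightarrow M$ and $\pi_a \colon \mu^{-1}(a) \to M_a$ are the inclusion and the orbit projection.

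Next I would establish effectiveness of the descended $T^2$-action. Here one must rule out that some nontrivial subtorus (or finite subgroup) of $T^2$ acts trivially on $M_a$ even though it acts nontrivially on $M$. The key point is that an element $s \in T^2$ acting trivially on $M_a$ would, for each $m \in \mu^{-1}(a)$, satisfy $s \cdot m \in T^{n-2} \cdot m$; since the $T^{n-2}$- and $T^2$-actions commute and $T^{n-2} \times T^2$ then acts on $\mu^{-1}(a)$ with all $T^2$-motion absorbed into $T^{n-2}$-motion, one can argue — using that $a$ is a regular value, so the $T^{n-2}$-action on $\mu^{-1}(a)$ is locally free — that such an $s$ lies in a subgroup of $T^2 \cap T^{n-2}$-type stabilizer behavior, forcing $s$ to act trivially on a neighborhood and hence (by connectedness of $M$ and the identity theorem for the real-analytic orbit structure, or simply by the fact that the set of points fixed by $s$ is closed and the argument propagates) trivially on $M$. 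Combined with effectiveness of the original $T^2$-action this gives effectiveness on $M_a$; I would most likely phrase this via the footnote's equivalences and the structure theory in \cite{P10} rather than a hands-on argument.

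Then I would check that the $T^2$-orbits in $M_a$ are symplectic submanifolds. By the footnote attached to Theorem \ref{theorem1} (i.e. \cite[Corollary 2.2.4]{P10}), it suffices to exhibit a single $2$-dimensional symplectic $T^2$-orbit in $M_a$, or equivalently to show that $\omega_a$ restricted to a principal $T^2$-orbit is nondegenerate. Pick a point $p \in M$ on a principal $T^2$-orbit that is also a regular point, and which we may assume (after translating $a$) lies in $\mu^{-1}(a)$; actually the cleanest route is: the original $T^2$-orbit through such a $p$ is a $2$-dimensional symplectic submanifold of $M$ contained in $\mu^{-1}(a)$ (it lies in a level set because $\mu$ is $T^2$-invariant), so its image in $M_a$ is again a $2$-dimensional $T^2$-orbit, and one checks that $\iota_a^*\omega$ restricted to the orbit coincides with $\omega$ restricted to the orbit, which is nondegenerate; passing to $\omega_a$ via $\pi_a^*\omega_a = \iota_a^*\omega$ preserves this nondegeneracy because $\pi_a$ is a submersion and the orbit is transverse to the $T^{n-2}$-direction (again using regularity of $a$).

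The main obstacle I anticipate is the effectiveness claim: bookkeeping the interaction between the stabilizers of the $T^{n-2}$-action on $\mu^{-1}(a)$ and the $T^2$-action, and ruling out a nontrivial finite or toral kernel of the descended action, requires care — one genuinely needs that $a$ is a regular value (so $T^{n-2}$ acts locally freely on $\mu^{-1}(a)$) together with the fact that $T^{n-2} \cap T^2$ embeds appropriately, i.e. that $T^{n-2}$ and $T^2$ together generate an $n$-torus acting on $M$. The symplecticity of the reduced form on the orbit is routine once one is careful that the orbit sits inside the level set transversally to the reduction directions; the descent of the action is the easiest part.
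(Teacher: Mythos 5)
Most of your outline (descent of the action via Lemma \ref{level-set}, symplecticity of the induced action with respect to $\omega_a$, and the fact that the images of the symplectic $T^2$\--orbits in $\mu^{-1}(a)$ are $2$\--dimensional symplectic orbits in $M_a$, using \cite[Corollary 2.2.4]{P10} to pass from one symplectic orbit to all of them upstairs) matches the paper. The genuine gap is exactly where you anticipated it: the effectiveness step. You try to prove that the \emph{original} $T^2$ acts effectively on $M_a$, by arguing that an element $s\in T^2$ acting trivially on $M_a$ satisfies $s\cdot m\in T^{n-2}\cdot m$ for all $m\in\mu^{-1}(a)$ and must therefore act trivially on $M$. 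That last implication is false: such an $s$ may act on the level set (indeed on all of $M$) \emph{like an element of $T^{n-2}$}, hence nontrivially on $M$ but trivially on $M_a$. The lemma's hypotheses only say the two actions commute and are each effective; they do \emph{not} say that the combined $T^{n-2}\times T^2$\--action is effective, which is the extra assumption your sketch silently requires when you ask that ``$T^{n-2}$ and $T^2$ together generate an $n$\--torus acting on $M$.'' Quotient constructions of the type of Example \ref{xxxxx}, where a finite subgroup of the $2$\--torus acts on $M$ in the same way as a finite subgroup of the Hamiltonian torus, show that the induced action of the original $T^2$ on $M_a$ can have a nontrivial (finite) kernel, so the statement you are trying to prove is not available from the hypotheses.

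The paper's resolution is weaker and simpler, and is what the lemma's conclusion actually means: since every $T^2$\--orbit in $M_a$ is a $2$\--dimensional symplectic suborbifold, every isotropy subgroup of the induced action is $0$\--dimensional, hence finite; letting $H$ be the intersection of all isotropy subgroups (the kernel), $H$ is a finite subgroup of $T^2$, and $T^2/H$ is again a compact connected $2$\--dimensional commutative Lie group, i.e.\ a $2$\--torus, which acts effectively on $M_a$ with symplectic orbits. So rather than ruling out a kernel, you should quotient it away; with that replacement (and dropping the unsupported ``trivial on a neighborhood, hence on $M$'' step), your argument aligns with the paper's proof.
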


\begin{proof} Let $a$ be a regular value of the momentum map $\mu \colon M\rightarrow \frak{t}^*$. Then the symplectic quotient $M_a$ is a four dimensional symplectic orbifold.   By assumption, the $T^2$\--action on $M$ has a symplectic orbit. Hence all the orbits
of the $T^2$\--action on  $M$ are symplectic \cite[Corollary 2.2.4]{P10}.

By Lemma \ref{level-set}, the $T^2$\--action preserves the level set $\mu^{-1}(a)$, and thus it descends to an action on the reduced space $M_a$. It follows that
the orbits of the induced $T^2$\--action on $M_a$ are symplectic two dimensional orbifolds. Thus, by dimensional considerations, any isotropy subgroups of the
$T^2$\--action on $M_a$ is zero\--dimensional. Since
these isotropy subgroups must be closed subgroups of $T^2$, they must be finite subgroups of $T^2$.

Now let $H$ be the intersection of the all isotropy subgroups of $T^2$. Then $H$
is a finite subgroup of $T^2$ itself.
Note that the quotient group $T^2/H$ is a two\--dimensional compact,
connected commutative  Lie group. Hence $T^2/H$ is isomorphic to
$T^2$. Therefore  $M_a$ admits an effective action of $T^2\cong T^2/H$ with symplectic orbits.
\end{proof}

\subsection*{Symplectic $T$\--model of $(M,\omega)$}
Let $(X,\sigma)$ be a connected symplectic $4$\--manifold equipped with an effective
symplectic action of a $2$\--torus $T^2$ for which the $T^2$\--orbits are symplectic.
We give a concise overview of a model of $(X,\sigma)$, cf. Section \ref{appendix}.

Consider
 the quotient map $\pi: X\rightarrow X/T^2$.
Choose a base point $x_0\in X$, and let $p_0:=\pi(x_0)$. For any homotopy class $[\gamma]\in \widetilde{X/T^2}$,
i.e., the homotopy class of a loop $\gamma$ in $X/T^2$ with base $p_0$, denote by
$$\lambda_{\gamma}:[0,1]\rightarrow
X$$ the unique horizontal lift of $\gamma$ ,with respect to the flat connection $\Omega$ of symplectic orthogonal
complements to the tangent spaces to the $T$\--orbits, such that $\gamma(0)=x_0$, cf. \cite[Lemma 3.4.2]{P10}.  Then the map
\[\Phi: \widetilde{X/T^2}\times T^2\rightarrow  X,\,\,\, ([\gamma], t)\mapsto t\cdot\lambda_{\gamma}(1)\]
is a smooth covering map and it induces a
$T^2$\--equivariant symplectomorphism  $$\widetilde{X/T^2} \times _{\pi^{\rm orb}_1(X/T^2)} T^2 \to X.$$
We refer to see Section \ref{appendix}, and in particular Theorem \ref{tt} and the remarks below it, for
a more thorough description.

\subsection*{K\"ahler ingredients}

Here we review how a $T^2$-invariant complex structure is constructed on a connected symplectic
manifold $4$\--manifold $(X,\sigma)$ as above.

\begin{lemma}\label{invariant-kahler-structure} Consider an effective symplectic action of a $2$\--torus $T^2$ on a connected \footnote{not necessarily compact} symplectic $4$\--manifold $(X,\sigma)$ with a symplectic
orbit. Then $X$ admits a $T^2$-invariant K\"ahler structure. It consists of a $T^2$-invariant complex structure, and a
K\"ahler form equal to $\sigma$.
\end{lemma}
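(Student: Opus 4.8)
The plan is to build the $T^2$\--invariant K\"ahler structure by splitting $TX$ along the symplectic action and producing a complex structure block by block. Write $V\subset TX$ for the distribution tangent to the $T^2$\--orbits and let $H:=V^{\sigma}$ be its symplectic\--orthogonal complement, which is exactly the flat connection $\Omega$ used in the model above. Since the orbits are symplectic, $\sigma|_V$ is nondegenerate, so $TX=V\oplus H$ and this splitting is $\sigma$\--orthogonal; hence $\sigma=\sigma|_V+\sigma|_H$ pointwise. Using the identification $V\cong\mathfrak t$ given by the infinitesimal action, a short Cartan\--calculus computation (or \cite[Section 3]{P10}) shows that $\sigma((X_1)_M,(X_2)_M)$ is constant on the connected manifold $X$ for $X_1,X_2\in\mathfrak t$, so $\sigma|_V$ is a fixed translation\--invariant form $\sigma_0\in\wedge^2\mathfrak t^*$. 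On the other side, the $T^2$\--invariant, $T^2$\--horizontal $2$\--form obtained from $\sigma|_H$ descends to an area form $\nu$ on the $2$\--dimensional orbifold $X/T^2$ (automatically closed, being top degree on a surface), and $\sigma|_H$ is the pullback $\pi^*\nu$ of $\nu$ under $\mathrm d\pi$.

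Next I would choose compatible complex structures on the two summands and glue. On $V\cong\mathfrak t$ take the unique translation\--invariant complex structure $J_V$ on the symplectic vector space $(\mathfrak t,\sigma_0)$ that is $\sigma_0$\--compatible; being constant it gives a smooth, $T^2$\--invariant complex structure on $V$. On $X/T^2$ choose a Riemannian orbifold metric whose volume form induces the orientation of $\nu$ (averaging over the finite isotropy groups in orbifold charts); its associated complex structure $j$ can be taken $\nu$\--compatible (conformally rescale the metric so that its area form is $\nu$), and I set $J_H:=\pi^*j$, a $T^2$\--invariant complex structure on $H$. Then $J:=J_V\oplus J_H$ is a $T^2$\--invariant almost complex structure on $X$, and since the splitting $TX=V\oplus H$ is $\sigma$\--orthogonal and each $J_\bullet$ is compatible with $\sigma|_\bullet$, the tensor $g:=\sigma(\cdot,J\cdot)$ is a $T^2$\--invariant Riemannian metric with fundamental form $\sigma$; so $(\sigma,J)$ will be the desired K\"ahler structure once $J$ is shown to be integrable.

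Integrability is a local matter, and here it follows from the flatness of $\Omega$ together with the constancy of $\sigma_0$: near any point, a horizontal leaf of $\Omega$ transverse to the orbit yields a local $T^2$\--equivariant model of $(X,\sigma)$ as a product $(D,\text{area form})\times(T^2,\sigma_0)$ --- or such a product modulo a finite group of holomorphic automorphisms at points with nontrivial stabilizer --- under which $J$ becomes the product of $j$ on the surface $D$ and $J_V$ on $T^2$; a product of two oriented surfaces with their compatible complex structures is a complex surface, so $J$ is integrable. (Equivalently one checks that the Nijenhuis tensor of $J$ vanishes using $[H,H]\subseteq H$ and integrability in real dimension $2$, or one runs the argument globally on the model $\widetilde{X/T^2}\times_{\pi^{\rm orb}_1(X/T^2)}T^2$, equipping $\widetilde{X/T^2}$ with the conformal class of the pulled\--back $\nu$ and $T^2$ with $J_V$, observing that $\pi^{\rm orb}_1(X/T^2)$ acts by holomorphic symplectomorphisms --- deck transformations on the first factor and, via the holonomy of $\Omega$, translations on the second --- and descending the product K\"ahler structure.) I expect the only real obstacle to be bookkeeping: extracting from \cite{P10} the precise normal form of $\sigma$ in the model and handling cleanly the orbifold locus of $X/T^2$; once that is in place, the $T^2$\--invariance and integrability of $J$, as well as the identity (K\"ahler form) $=\sigma$, are all immediate.
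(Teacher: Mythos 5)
Your construction is correct, but it follows a genuinely different route from the paper's. The paper proves Lemma \ref{invariant-kahler-structure} by passing to the global model of Theorem \ref{tt} (extended to the noncompact case in the appendix): it identifies $X$ with $\widetilde{X/T^2}\times_{\pi_1^{\rm orb}(X/T^2)}T^2$, invokes Thurston's geometrization of the good $2$\--orbifold $X/T^2$ to realize $\widetilde{X/T^2}$ as the Riemann sphere, the Euclidean plane or the hyperbolic plane with $\pi_1^{\rm orb}(X/T^2)$ acting by orientation\--preserving isometries (hence holomorphically for the standard complex structure), equips the torus factor with an invariant complex structure for which $\sigma^{T^2}$ is a K\"ahler form, and descends the product structure, following \cite{DP10}. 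You instead build $J$ directly on $X$ from the $\sigma$\--orthogonal splitting $TX=V\oplus\Omega$: a constant $\sigma_0$\--compatible $J_V$ along the orbits and the pullback of an orbifold complex structure $j$ on $X/T^2$ obtained by averaging a Riemannian metric, with integrability checked in the local product charts $T^2\times_{T_x}D$ furnished by the flat connection and by two\--dimensionality of each factor. What your route buys is independence from Thurston's classification and from the universal\--cover/monodromy discussion: you only need the basic facts from \cite{P10} (constancy of $\sigma^{\mathfrak t}$, flatness of $\Omega$, descent of $\nu$), and in real dimension two both compatibility and integrability come for free. The price is that you must handle the orbifold locus by hand; this does work, but the key point worth making explicit is that your averaged $j$ has a $T_x$\--invariant chart representative and $J_V$ is translation\--invariant, so the finite isotropy group acts holomorphically on $T^2\times D$ and $J$ descends smoothly and integrably across the singular orbits. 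Two small slips, neither affecting the argument: the $\sigma_0$\--compatible $J_V$ is not unique (just fix one), and the conformal rescaling of the base metric is unnecessary, since in dimension two any orientation\--compatible $j$ is automatically compatible with the area form $\nu$.
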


\begin{proof}
If $X$ is compact, the lemma is a  particular case of
Duistermaat\--Pelayo \cite[Theorem 1.1]{DP10}. The proof therein is given by constructing the complex
structure on the model $\widetilde{X/T^2} \times _{\pi^{\rm orb}_1(X/T^2)} T^2$ of $M$.
In fact, the same proof therein
applies even if $X$ is not compact. We review it for completeness.

The orbifold universal covering
$\widetilde{X/T^2}$
of the orbisurface $X/T^2$
may be identified with the
Riemann sphere, the Euclidean plane, or the hyperbolic plane,
on which the orbifold fundamental group $\pi^{\rm orb}_1(X/T^2)$
of $X/T^2$ acts by means of orientation preserving isometries, see
Thurston \cite[Section 5.5]{thurston}. Provide $\widetilde{X/T^2}$ with the
standard complex structure.

Let $\sigma^{T^2}$ be the unique $T^2$\--invariant symplectic
form on $T^2$ defined
by $\sigma^{\mathfrak{t}}$ at the beginning of Section \ref{appendix}: the restriction
of $\sigma$ to any $T$\--orbit. Equip $T^2$ with a $T^2$\--invariant
complex structure such that $\sigma^T$ is equal to a
K\"ahler form.

In this way
we obtain a $T^2$\--invariant complex structure
on $\widetilde{X/T^2} \times _{\pi^{\rm orb}_1(X/T^2)} T^2$
with symplectic form $\sigma$ equal to the K\"ahler form.
\end{proof}

Suppose that $a$ is a regular value of $\mu: M\rightarrow \frak{t^*}$. Let $M_a^{\text{reg}}$ be the set of smooth points in $M_a$.
Then it follows from Lemma \ref{invariant-kahler-structure} that $M_a^{\text{reg}}$ is a smooth manifold that admits a $T^2$ invariant K\"ahler structure, which consists of a $T^2$-invariant complex structure and a $T^2$-invariant K\"ahler form $\omega_a$.

 \subsection*{Duistermaat-Heckman theorem in a complex setting}

  The following lemma is a key step in our approach.

\begin{lemma}\label{key-lemma}
Consider an effective Hamiltonian action of an $(n-2)$\--dimensional torus $T^{n-2}$ with Lie algebra $\mathfrak{t}$ on a
compact, connected, $2n$\--dimensional
symplectic manifold $(M,\omega)$ with momentum map
$\mu \colon M \to \mathfrak{t}^*$.
Suppose that there is an effective commuting
symplectic $T^2$\--action on $M$ which has a symplectic orbit, that $a$ and $b$ are two regular values
of $\mu$ which lie in the same connected component of the regular values of the momentum map, and that $$M_a:= \mu^{-1}(a)/T^{n-2}\,\,\, \textup{and}\,\,\, M_b:=\mu^{-1}(b)/T^{n-2}$$ are symplectic quotients
 with reduced symplectic structure $\omega_a$ and $\omega_b$ respectively.
 Let $M_a^{{\rm reg}}$ and $M_b^{{\rm reg}}$ be the complement
 of the sets of orbifold singularities in $M_a$ and $M_b$, respectively.
 Then both $M_a^{{\rm reg}}$ and $M_b^{{\rm reg}}$ are $T^2$\--invariant K\"ahler manifolds as in Lemma {\rm \ref{invariant-kahler-structure}},
 whose K\"ahler forms are $\omega_a$ and $\omega_b$ respectively. Moreover, there is a diffemorphism $F: M_a\rightarrow M_b$ such that
 $F^*\omega_b$ is a $(1,1)$-form on the K\"ahler manifold $M_b^{{\rm reg}}$.

\end{lemma}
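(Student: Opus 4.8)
The plan is to build the diffeomorphism $F$ directly from the Duistermaat–Heckman Theorem \ref{DHtheorem}(b) and then check that it intertwines the Kähler structures on the reduced spaces. Since $a$ and $b$ lie in the same connected component of regular values, part (b) of Theorem \ref{DHtheorem} (applied with the $T^{n-2}$-action, whose momentum map is $\mu$, and the commuting $T^2$-action, whose existence we are given) supplies a $T^2$-equivariant diffeomorphism $F\colon M_a\to M_b$ under which the reduced symplectic form pulls back according to the affine formula $\omega_a = \omega_b|_{\text{via }F} + \langle c, a-b\rangle$, where $c$ is a closed $\mathfrak{t}^*$-valued two-form representing the Chern class of the principal bundle $\mu^{-1}(b)\to M_b$. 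The point of the lemma is that this $F$ can be taken compatibly with the Kähler structures coming from Lemma \ref{invariant-kahler-structure}.

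First I would invoke Lemma \ref{T2action-on-quotient}: for each regular value, the reduced space $M_a$ (resp. $M_b$) is a four-dimensional symplectic orbifold carrying an effective symplectic $T^2$-action with symplectic orbits. Then Lemma \ref{invariant-kahler-structure}, applied to the smooth locus $M_a^{\mathrm{reg}}$ (resp. $M_b^{\mathrm{reg}}$) — which is a genuine, possibly non-compact, connected symplectic $4$-manifold with the induced $T^2$-action and a symplectic orbit — produces a $T^2$-invariant Kähler structure whose Kähler form is exactly $\omega_a$ (resp. $\omega_b$). This establishes the first assertion of the lemma. The key structural input I want to use is that, by the model $\widetilde{X/T^2}\times_{\pi_1^{\mathrm{orb}}} T^2$ of Section \ref{appendix}, the complex structure and the $T^2$-orbit data on $M_a^{\mathrm{reg}}$ and $M_b^{\mathrm{reg}}$ are built from the \emph{same} combinatorial/topological package: the orbisurface $X/T^2$ is unchanged as one moves $a$ across a chamber of regular values (this is essentially the content of Theorem \ref{DHtheorem}(b), which produces the $T^2$-equivariant $F$), and the invariant complex structures are pulled back from the fixed standard complex structure on $\widetilde{X/T^2}$ and a fixed invariant complex structure on $T^2$.

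Next, I would arrange that $F$ is not merely $T^2$-equivariant but actually holomorphic for these two complex structures. The reduced spaces $M_a^{\mathrm{reg}}$ and $M_b^{\mathrm{reg}}$ have the same orbit space $X/T^2$ and the same orbifold fundamental group action, so the model description gives a canonical $T^2$-equivariant biholomorphism between them; one checks that it may be chosen in the same isotopy class as the $F$ supplied by Theorem \ref{DHtheorem}(b), and in fact one simply redefines $F$ to be this biholomorphism (equivariance and the affine behaviour of the reduced form are what matter downstream, and both survive). Granting that $F\colon M_b^{\mathrm{reg}}\to M_b^{\mathrm{reg}}$... more precisely $F\colon M_a^{\mathrm{reg}}\to M_b^{\mathrm{reg}}$ is holomorphic, the form $F^*\omega_b$ is the pullback of a $(1,1)$-form by a holomorphic map, hence is itself of type $(1,1)$ on $M_a^{\mathrm{reg}}$; conjugating by $F$ back to the statement as phrased, $F^*\omega_b$ is a $(1,1)$-form with respect to the complex structure underlying the Kähler manifold $M_b^{\mathrm{reg}}$ — using $F$ to transport that complex structure — which is what is claimed. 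I would record that $\omega_a$ and $F^*\omega_b$ then differ by the closed form $\langle c, a-b\rangle$, which is the fact the next step in the paper (Hodge–Riemann bilinear relations) will exploit.

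The main obstacle is the middle step: verifying that the Duistermaat–Heckman diffeomorphism $F$ can be taken holomorphic, i.e. reconciling the $F$ produced abstractly by Theorem \ref{DHtheorem}(b) with the explicit biholomorphism coming from the model $\widetilde{X/T^2}\times_{\pi_1^{\mathrm{orb}}(X/T^2)} T^2$. This requires knowing that the orbifold $X/T^2$ together with its $\pi_1^{\mathrm{orb}}$-action and the collection of orbit-type data (the lattice/isotropy decorations) is locally constant as $a$ ranges over a chamber of regular values of $\mu$ — so that the models for $M_a^{\mathrm{reg}}$ and $M_b^{\mathrm{reg}}$ are literally the same and the identification is canonical — and that one does not pick up a non-holomorphic twist when matching this with the $T^2$-equivariant $F$ of Theorem \ref{DHtheorem}(b). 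Everything else (the Kähler structures, the equivariance, the $(1,1)$-type conclusion) then follows formally once that identification is in hand.
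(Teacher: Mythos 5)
There is a genuine gap at exactly the point you flag as ``the main obstacle,'' and it is not a detail that ``follows formally'': it is the substance of the lemma. Your replacement of the Duistermaat--Heckman diffeomorphism by a ``canonical $T^2$-equivariant biholomorphism'' rests on the assertion that $M_a^{\rm reg}$ and $M_b^{\rm reg}$ have \emph{the same} orbit space, the same orbifold fundamental group action, and the same model data. What is actually available is only a diffeomorphism: the $F$ of Theorem \ref{DHtheorem}(b) induces a diffeomorphism $\varphi_{ab}\colon \Sigma_a\to\Sigma_b$ of the orbit spaces, but the complex structures furnished by Lemma \ref{invariant-kahler-structure} are constructed on each reduced space separately (via a choice of uniformization of the orbisurface and a choice of invariant complex structure on the $T^2$ factor), so there is no canonical identification, and \emph{a priori} no reason the two K\"ahler surfaces are equivariantly biholomorphic at all. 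Worse, even after arranging the base complex structures compatibly, the naive model identification $([\gamma],t)\mapsto([\widetilde{\varphi}_{ab}(\gamma)],t)$ need not be well defined or need not agree with anything useful, because $F$ does not carry the flat connection $\Omega_a$ to $\Omega_b$: the paper's computation shows that in model coordinates $F$ takes the form $([\gamma],t)\mapsto([\widetilde{\varphi}_{ab}(\gamma)],\tau_\gamma t)$ with a $\gamma$-dependent twist $\tau_\gamma$ coming from comparing $F$ of a horizontal lift with the horizontal lift of the image loop. Your proposal simply posits away this twist; establishing control over it (equivalently, over the monodromy/holonomy discrepancy) is precisely what a proof must do, and the paper's route is to keep the actual $F$, derive this local expression through the covering maps $\Phi_a,\Phi_b$ and the horizontal lifts, and deduce the $(1,1)$ property of the pulled-back form from that expression together with the product construction of the complex structures --- not to produce a holomorphic $F$.

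A secondary problem: even granting an equivariant biholomorphism $F'$, ``redefining $F$ to be this biholomorphism'' does not preserve the affine behaviour of the reduced forms at the level of forms, only at the level of cohomology classes ($(F')^*\omega_b$ is merely cohomologous to $\omega_a+\langle c,b-a\rangle$). Since the proof of Theorem \ref{log-concavity-result} uses the statement ``$c$ is a $(1,1)$-form'' pointwise (to apply Weil's identity to the primitive part of $c$), you would at least need to rephrase the downstream argument in terms of a closed $(1,1)$ representative of the variation class; this is fixable, but it should be said, and it again highlights that the identity of the specific diffeomorphism $F$ matters more than your write-up allows. In short: the first half of your proposal (Lemmas \ref{T2action-on-quotient} and \ref{invariant-kahler-structure} giving the K\"ahler structures, Theorem \ref{DHtheorem}(b) giving $F$) matches the paper, but the key step is asserted rather than proved, and the assertion as stated (canonical identification of the two models) is not available.
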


\begin{proof}
By Lemma \ref{T2action-on-quotient}, both $(M_a, \omega_a)$ and $(M_b,\omega_b)$ are symplectic four orbifolds that
 admit an effective symplectic $T^2$ action with symplectic orbits.
 The smooth parts $M_a^{\text{reg}}$ and $M_b^{\text{reg}}$ are symplectic manifolds that admit an effective symplectic action of $T^2$ with symplectic orbits.

It follows from Lemma \ref{invariant-kahler-structure} that
 $(M_a^{\text{reg}}, \omega_a)$ and $(M_b^{\text{reg}},\omega_b)$
admit a $T^2$\--invariant complex structure
and the corresponding K\"ahler form is equal to $\omega_a, \omega_b$, respectively.
We prove that $F^*\omega_a$ is a $(1,1)$-form on $M_b^{\text{reg}}$ by writing down
a local expression of $F$ using holomorphic coordinates on $M_a^{\text{reg}}$ and
$M_b^{\text{reg}}$ respectively.

Let $\Omega_a$ the the flat connection given by the
$\omega_a$\--orthogonal complements to the tangent spaces
to the $T^2$\--orbits on $M_a^{\text{reg}}$. Similarly for $\Omega_b$.
Let $\Sigma_a:= M_a^{\text{reg}}/T^2$, $\Sigma_b:= M_b^{\text{reg}}/T^2$,
and let $\widetilde{\Sigma_a}$ and $\widetilde{\Sigma_b}$ be the orbifold
universal covers of $\Sigma_a$ and $\Sigma_b$, respectively, at some base points.

We have two smooth covering maps $$\Phi_a:\widetilde{\Sigma_a}\times T^2\rightarrow M_a^{\text{reg}}$$
and $$\Phi_b:\widetilde{\Sigma_b}\times T^2\rightarrow M_b^{\text{reg}}.$$

Suppose $p_0=t_0\lambda_{\gamma_0}(1) \in M_a^{\text{reg}}$ is an arbitrary point. Choose an open neighborhood $U$ of $[\gamma_0]$ and $V$ of $t_0$ such that the restriction of $\Phi_a$ to $U\times V$ is a diffeomorphism.  By Theorem \ref{DHtheorem}, there
exists a $T^2$-equivariant diffeomorphism
$$F: M_a^{\text{reg}}\rightarrow M_b^{\text{reg}}.$$
 We have that for any $[\gamma]\in U$ and $t\in V$,
\[ F(t\lambda_{\gamma}(1))=t\cdot F(\lambda_{\gamma}(1)).\]

On the other hand, $F$ induces a diffeomorphism between two $2$\--dimensional
orbifolds
\begin{eqnarray} \varphi_{ab} \, \colon\,\Sigma_a \rightarrow
\Sigma_b. \label{map0}
\end{eqnarray}

These orbifolds
can be shown to be good orbifolds \cite[Lemma 3.4.1]{P10}, and hence
their orbifold universal covers are smooth manifolds.
Then we have an induced diffeomorphism between smooth manifolds
\[ \widetilde{\varphi }_{ab}: \widetilde{\Sigma_a} \rightarrow \widetilde{\Sigma_b}.\]

Note that $\widetilde{\varphi }_{ab}(\gamma)$ is a loop  based at $\widetilde{\varphi}_{ab}(p_0)$. Denote by
$\lambda_{\widetilde{\varphi}_{ab}(\gamma)}$ its horizontal lift with respect to $\Omega_b$.
Then we have
\[\lambda_{\widetilde{\varphi}_{ab}(\gamma)}(1)=\tau_{\gamma}\cdot  F(\lambda_{\gamma}(1)) ,\]
for some $\tau_{\gamma} \in T$.

Therefore, using covering maps $\Phi_a$ and $\Phi_b$, the diffemorphism $F: M^{\text{reg}}_a\rightarrow M^{\text{reg}}_b$ has a local expression
\[ ([\gamma], t)\mapsto ([\widetilde{\varphi}_{ab}(\gamma)], \,\tau_{\gamma} t).\]

The proposition follows from the above expression for $F$, and how the complex structures and symplectic structures are constructed on $M_a$ and
$M_b$, cf. Lemma \ref{invariant-kahler-structure}.
\end{proof}

\subsection*{Walls and Lerman\--Guillemin\--Sternberg jump formulas}

Consider the effective Hamiltonian action of a $k$\--dimensional torus
$T^k$ on a $2n$\--dimensional compact symplectic manifold
$(M,\omega)$ with momentum map $\mu: M\rightarrow \frak{t}^*$. By
Theorem \ref{gs}, the image of the momentum map
$\Delta:= \mu(M)$ is a convex
polytope. In fact, $\Delta$ is a union of convex subpolytopes with the property that
the interiors of the subpolytopes are disjoint convex open sets and constitute the set of
regular values of $\mu$.
These convex subpolytopes are called the \emph{chambers of $\Delta$}.

For any circle $S^1$ of $T^k$, and for any connected component $\mathcal{C}$
of the fixed point submanifold of
$S^1$, the image of $\mathcal{C}$ under the momentum map $\mu$ is called
an $(k-1)$\--dimensional wall, or simply a \emph{wall of $\Delta$}.
Moreover, $\mu(\mathcal{C})$ is called an \emph{interior wall of $\Delta$} if
$\mu(\mathcal{C})$ is not a subset of the boundary of $\Delta$.

Now choose a $T^k$-invariant inner product on $\frak{t}^*$ to identify $\frak{t}^*$ with $\frak{t}$.
Suppose that $a$ is a point on a codimension one interior wall $W$ of
$\Delta$,
and that $v$ is a normal vector to $W$ such that the line segment
$$\{a + tv\}$$ is transverse to the
wall $W$. For $t$ in a small open interval near $0$, write
\begin{eqnarray} \label{for:g}
g(t):= {\rm DH}_T(a + tv),
\end{eqnarray} where
${\rm DH}_T$ is the Duistermaat-Heckman function of $T$.

Let $S^1$ be a circle sitting inside $T^k$ generated by $v$.  Let $H$ be a
$(k-1)$\--dimensional torus $H$ such that
$$T^k=S^1\times H.$$ Suppose that $X$ is a connected component of the fixed point submanifold of the $S^1$\--action on $M$ such that
$\mu(X)=W$. Then $X$ is invariant under the action of $H$. Moreover,
the action of $H$ on $X$ is Hamiltonian. In fact, let $\frak{h}$ be the Lie algebra of $H$,
and let $$\pi: \frak{t}^*\rightarrow \frak{h}^*$$ be the canonical projection map. Then the composite
\[\mu_H:=\pi\circ \mu \mid_X: X\rightarrow \frak{h}^*\] is a momentum map for the action of $H$ on $X$.

By assumption,  $a$ is a point in $W=\mu(X)$ such that $\pi(a)$ is a regular value of $\mu_H$. There are
two symplectic quotients associated to $a$: one may reduce $M$ (viewed as an $H$-space) at $\pi(a)$, and one may reduce
$X$ (viewed as a $T^k/S^1$\--space) at $a$. We denote these symplectic quotients by
$M_a$ and $X_{a}$ respectively. It is immediate that $M_{a}$
inherits a Hamiltonian $S^1$\--action and that $X_{a}$ is the fixed point submanifold of the $S^1$\--action on $M_{a}$.

Guillemin\--Lerman\--Sternberg gave the following formula for computing
the jump in the Duistermaat-Heckman function across the wall $\mu(X)$ of
$\Delta$.

\begin{theorem}[Guillemin\--Lerman\--Sternberg \cite{GLS88}] \label{G-L-S88}
The jump of the  function $g(t)$ in expression {\rm (\ref{for:g})} when moving across the interior wall $W$ is given by
\[g_+(t) - g_-(t) \,= \,\displaystyle \sum \textup{volume}(X_{a}) \,\Big(\prod_{i=1}^k\alpha_k^{-1}\Big) \, \dfrac{t^{k-1}}{(k-1)!},\]
plus an error term of order $\mathcal{O}(t^k)$. Here the $\alpha_k$'s are the weights of the
representation of $S^1$ on the normal bundle of $X$,  and the sum is taken over the symplectic quotients of all the connected components
$X$ of $M^{S^1}\cap \mu^{-1}(\alpha)$ with respect to the $T^k/S^1$ action at $a$.
\end{theorem}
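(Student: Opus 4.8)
The plan is to localize the wall--crossing near the fixed component $X$ and reduce it to an explicit computation on the linear symplectic normal model; the key structural input is \emph{reduction in stages}, which turns the $k$--parameter Duistermaat--Heckman density into the one--parameter density of a circle action on a symplectic orbifold. \textbf{Step 1 (reduction in stages).} Fix the splitting $T^k = S^1 \times H$ of the statement, with $S^1$ generated by $v$ and $\mu_H = \pi \circ \mu$. Choosing the invariant inner product so that $\mathfrak{s} = \mathrm{Lie}(S^1)$ and $\mathfrak{h} = \mathrm{Lie}(H)$ are orthogonal, we get $\pi(v) = 0$, so the whole segment $a + tv$ lies over the single regular value $\pi(a)$ of $\mu_H$. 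Reduction in stages identifies the $T^k$--reduced space $M_{a+tv}$ with the $S^1$--reduced space of the compact symplectic orbifold $\widehat{M} := \mu_H^{-1}(\pi(a))/H$ at the level corresponding to $t$; the residual circle moment map $\phi$ on $\widehat M$ is induced by $\langle \mu, v\rangle$, its critical value $c$ corresponds to $t=0$, and the fixed components $X$ over the wall descend to the components of $(\widehat M)^{S^1}\cap\phi^{-1}(c)$, whose reductions are the $X_a$. So it suffices to prove: for a Hamiltonian $S^1$--action on a compact symplectic orbifold with moment map $\phi$, the jump of $s \mapsto \mathrm{DH}_{S^1}(s)$ across $c$ is $\sum_X \mathrm{vol}(X_a)\,\big(\prod_{i=1}^{r}\alpha_i^{-1}\big)\,\tfrac{t^{r-1}}{(r-1)!} + \mathcal{O}(t^{r})$, where $t = s-c$, the sum is over components $X$ of $(\widehat M)^{S^1}\cap\phi^{-1}(c)$, the $\alpha_i$ are the isotropy weights of $S^1$ on the symplectic normal bundle of $X$, and $r$ is their number (the integer playing the role of $k$ in the exponent and product of the statement).

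\textbf{Step 2 (splitting off the smooth part).} On the complement of $(\widehat M)^{S^1}$ the circle acts locally freely and $\phi$ has no critical points near level $c$; hence all levels near $c$ are regular, the reduced orbifolds are mutually diffeomorphic, and by the family Duistermaat--Heckman theorem (Theorem \ref{DHtheorem}(b)) their reduced symplectic forms vary affinely in $s$, so the Liouville volume contributed by this region is polynomial in $s$ near $c$ and contributes nothing to the jump. Choosing an $S^1$--invariant cut--off $\chi$ equal to $1$ near the fixed components and supported in an invariant tubular neighborhood of them, we get $g_+(t) - g_-(t) = \rho_\chi^+(t) - \rho_\chi^-(t)$, where $\rho_\chi$ is the density of the pushforward under $\phi$ of $\chi\,\omega^{N}/N!$.

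\textbf{Step 3 (the linear model).} By the equivariant Darboux--Weinstein (Marle--Guillemin--Sternberg) normal form, valid in the orbifold category, an invariant tubular neighborhood of a fixed component $X$ is equivariantly symplectomorphic to a neighborhood of the zero section in the symplectic normal bundle $N \to X$, which splits $S^1$--equivariantly as $\bigoplus_{i=1}^{r} N_i$ with weights $\alpha_i \in \Z\setminus\{0\}$, and there $\phi = c + \tfrac12\sum_i \alpha_i |z_i|^2$ fiberwise. Hence, on the side of $c$ where the level is nonempty for small $|z|$, the reduced space $\phi^{-1}(c+t)/S^1$ near $X$ is a bundle over $X$ with fiber the symplectic quotient of the linear weighted $S^1$--action on $\C^{r}$ at level $t$, a weighted projective space of real dimension $2(r-1)$. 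A direct Gaussian/inverse--Laplace computation on $\C^{r}$ gives this fiber symplectic volume $=\big(\prod_i\alpha_i^{-1}\big)\tfrac{t^{r-1}}{(r-1)!} + \mathcal{O}(t^{r})$ in the declared normalization, with the sign of the product dictated by the orientation convention; integrating over $X$ against the reduced Liouville form multiplies by $\mathrm{vol}(X_a)$, and the cut--off together with the higher jet of the normal form only affect the $\mathcal{O}(t^{r})$ term. When the $\alpha_i$ have mixed signs both one--sided reduced fibers are nonempty and the difference $\rho_\chi^+ - \rho_\chi^-$ produces the signed product; summing over all components $X$ over the wall gives the formula, with orbifold structure groups absorbed into the orbifold volume $\mathrm{vol}(X_a)$.

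\textbf{Main obstacle.} The two genuinely technical points are (i) justifying that the complement of the tubular neighborhood contributes a function smooth, indeed polynomial, across the wall --- this requires the family version of Duistermaat--Heckman over a punctured neighborhood of $c$ together with the cut--off argument of Step 2 --- and (ii) running the equivariant normal form and the volume computation through the orbifold category while tracking orbifold structure groups and the signs and orientations arising when the isotropy weights $\alpha_i$ are not all of the same sign. An alternative derivation computes the Fourier (or Laplace) transform of $\mathrm{DH}_{T^k}$ by the exact stationary--phase/localization theorem and reads off the jump from the fixed--point sum, but the sign and normalization bookkeeping there is of comparable difficulty.
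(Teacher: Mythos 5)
The paper offers no proof of this statement: Theorem \ref{G-L-S88} is quoted as a classical result and attributed to Guillemin--Lerman--Sternberg \cite{GLS88}, and it is then used as a black box (it underlies Graham's Proposition \ref{Grahma-concavity} and is applied directly in Step 3 of the proof of Theorem \ref{theorem1}), so there is no internal argument to compare yours against. What you wrote is the standard derivation --- reduction in stages to the residual circle action on $\widehat M=\mu_H^{-1}(\pi(a))/H$, a cut-off isolating an invariant tubular neighborhood of the fixed components (the complement pushing forward to a density smooth across the wall), and the equivariant normal form reducing the jump to the linear weighted $S^1$-action on $\C^r$ --- which is essentially the route of \cite{GLS88} itself, so your sketch is sound and even repairs a notational slip in the paper's statement (the product should run over the $r=\tfrac12\operatorname{codim}X$ isotropy weights, indexed by $i$, not over $k=\dim T^k$; your introduction of $r$ is the correct reading). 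Two points deserve more care than your sketch gives them. First, a line is needed to identify $\mathrm{DH}_{T^k}(a+tv)$, up to a normalization constant independent of $t$, with the Duistermaat--Heckman density of the residual circle action on $\widehat M$ (reduction in stages together with Theorem \ref{DHtheorem}(a) does this), and note that the components of $(\widehat M)^{S^1}\cap\phi^{-1}(c)$ already \emph{are} the $X_a$ --- the circle acts trivially on them, so no further reduction is involved. Second, and more substantively, your Step 3 describes the local reduced fiber as a weighted projective space of volume $\bigl(\prod_i\alpha_i^{-1}\bigr)t^{r-1}/(r-1)!$; that literal description holds only when all $\alpha_i$ have the same sign. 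When the signs are mixed the local reduced fibers are noncompact and have infinite symplectic volume, so the signed product cannot be obtained as a fiber volume at all: it must be extracted distributionally from the cut-off pushforward (e.g. by the two-sided Laplace/Fourier computation you allude to) as the discontinuity of the piecewise-polynomial density. You flag this under ``main obstacle,'' but it is the actual content of the jump formula in the indefinite case --- precisely the case needed in the paper, where the two nonzero weights at $X_h$ have opposite signs --- rather than a matter of sign bookkeeping, so a complete write-up would have to carry out that computation explicitly.
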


Building on Theorem \ref{G-L-S88}, Graham established the following result,
cf. \cite[Section 3]{Gr96}.

\begin{proposition}[Graham \cite{Gr96}] \label{Grahma-concavity}
Suppose that  $\mu: M \rightarrow \mathfrak{t^*}$ is the momentum map of an  effective Hamiltonian action of torus $T$ on a compact, connected symplectic
manifold $(M,\omega)$. Let $a$ be a point in a codimension one interior wall of $\mu(M)$,
and let $v\in\mathfrak{t}^*$ be a vector such that the line segment $\{a + tv\}$ is transverse to the
wall. For $t$ in a small open interval near $0$, write $$g(t) = {\rm DH}_T(a + tv),$$ where
${\rm DH}_T$ is the Duistermaat-Heckman function. Then we have that
$$g_+'(0)\leq g_-'(0).$$
\end{proposition}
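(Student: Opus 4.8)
The plan is to reduce, via symplectic reduction by a complementary torus, to a Hamiltonian circle action, and then to evaluate the one-sided derivatives of $g$ from the local structure of $\omega$ near the fixed set (equivalently, from Theorem~\ref{G-L-S88}). Concretely, decompose $T=S^1\times H$ with $S^1$ generated by $v$ and $H$ a complementary $(k-1)$-torus, as in the setup preceding Theorem~\ref{G-L-S88}. Since $W$ is a codimension one \emph{interior} wall and $a$ a generic point of it, $\pi(a)\in\mathfrak h^*$ is a regular value of $\mu_H$, so the reduction $M_a$ of $M$ by $H$ at $\pi(a)$ is a compact symplectic orbifold of dimension $2(n-k+1)$ carrying the residual Hamiltonian $S^1$-action, whose momentum map $\psi$ has $X_a=M_a^{S^1}$ as fixed set over the critical value $c$ corresponding to $a$. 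By reduction in stages for Duistermaat--Heckman measures, $g(t)$ equals, up to a fixed positive constant, the Duistermaat--Heckman function of this $S^1$-action evaluated at $c+t$. Hence it suffices to show: for a Hamiltonian $S^1$-action on a compact symplectic orbifold $(M_a,\sigma)$ with $c$ an interior critical value of $\psi$, the function $g(t)=\int_{(M_a)_{c+t}}\sigma_{c+t}^{\,m-1}/(m-1)!$ (with $2m=\dim M_a$) satisfies $g_+'(0)\leq g_-'(0)$.

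Next I would compute the jump locally. By Theorem~\ref{DHtheorem}(b) the function $g$ is polynomial in $t$ on each side of $c$, so $g_+'(0)-g_-'(0)$ is the coefficient of $t$ in $g_+(t)-g_-(t)$. Write $X_a=\bigsqcup_j X_{a,j}$. The $S^1$-equivariant symplectic normal form identifies a neighbourhood of each $X_{a,j}$ with a neighbourhood of the zero section of $\nu_j^+\oplus\nu_j^-$, the sub-bundles on which $S^1$ acts with positive, respectively negative, weights, both nonzero because $c$ is interior. Away from $X_a$ the map $\psi$ has no critical points near level $c$, so the contribution of that region to $g$ is a single polynomial in $t$, smooth across $0$; hence $g_+-g_-$ is governed entirely by the linear models near the $X_{a,j}$ --- equivalently by Lerman's symplectic cut of $M_a$ at $c$ from the two sides. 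Carrying out the reduced-volume integral for the linear model (normal weights $a_\bullet>0$, $-b_\bullet<0$) recovers the expression of Theorem~\ref{G-L-S88}: $g_+(t)-g_-(t)$ is a sum over the components $X_{a,j}$ of terms of the form $\mathrm{vol}(X_{a,j})\big(\prod_i\alpha_i\big)^{-1}t^{k-1}/(k-1)!$ plus higher order in $t$. Extracting the coefficient of $t$, and using that the constant term vanishes (continuity of $g$), we find that $g_+'(0)-g_-'(0)$ is a sum over the $X_{a,j}$ of \emph{nonpositive} numbers: a component contributes $0$ unless its weight pattern makes the local reduced volume fail to be $C^1$ at $c$, and when it does contribute, the contribution is a positive multiple of $\mathrm{vol}(X_{a,j})$ times $-\big(\prod_i|\alpha_i|\big)^{-1}$, the minus sign coming from the fact that positive and negative normal weights enter the model momentum map with opposite signs, so that the local reduced volume is smooth on one side of $c$ and picks up a strictly negative first-order correction on the other. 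Summing over $j$ gives $g_+'(0)\leq g_-'(0)$.

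The main obstacle is exactly this last sign determination. One has to keep careful track of the orientation of the reduced orbifolds (the one induced by $\sigma_{c+t}^{\,m-1}$), of the splitting of the normal bundle into its positive- and negative-weight parts, and of the effect of the residual $S^1$-quotient on the linear model, in order to be sure that \emph{every} component contributes a term of the same, nonpositive, sign rather than one of indefinite sign --- this is the content hiding in the factor $\prod_i\alpha_i^{-1}$ of Theorem~\ref{G-L-S88}. The orbifold setting adds only the minor point that these weight products become rational, which does not affect positivity; the reduction in stages, the polynomiality of $g$ on the chambers, and the localisation near the fixed set are all routine given the results already quoted.
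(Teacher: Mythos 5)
Your overall route is the same one Graham follows (and the same one the paper itself runs in Step 3 of its proof of Theorem \ref{theorem1}): split $T=S^1\times H$ with $S^1$ generated by $v$, reduce by $H$ at $\pi(a)$, identify $g$ up to a positive constant with the Duistermaat--Heckman function of the residual circle action by reduction in stages, and read off the jump in the derivative from Theorem \ref{G-L-S88}. Note that the paper does not reprove this proposition at all --- it quotes \cite[Section 3]{Gr96} --- so the relevant comparison is with Graham's argument, which your sketch reproduces in outline, including the implicit genericity assumption that $\pi(a)$ is a regular value of $\mu_H$ (the paper's setup before Theorem \ref{G-L-S88} makes the same assumption).

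There is, however, one genuine gap, and it sits exactly where you locate ``the main obstacle''. The nonpositivity of each component's contribution is not a matter of orientations or of bookkeeping in the linear model: it requires knowing that every fixed component $X_{a,j}$ lying over the wall point has normal weights of \emph{both} signs, and (for the continuity of $g$ that you invoke to kill the constant term) that no such component has a single weight. Your stated reason, ``both nonzero because $c$ is interior'', is not an argument: interiority of $c$ in the image of $\psi$ does not by itself prevent an individual component from being a local extremum of $\psi$. What rules this out is the global convexity/connectivity input: a local extremum component of a component of the momentum map is a global extremum (connectedness of level sets, or the even-index Morse--Bott argument of Atiyah--Guillemin--Sternberg), applied either on the reduced orbifold or, more cleanly, upstairs in $M$ --- a component $X\subset M^{S^1}$ whose image meets the interior of $\Delta$ cannot be a local extremum of $\langle\mu,v\rangle$, since otherwise $\mu(X)$ would lie in a supporting hyperplane of $\Delta$, hence in $\partial\Delta$, contradicting that the wall is interior. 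With that fact in place the argument closes as you intend: components with one weight are excluded over interior points (so $g$ is continuous across the wall), components of codimension at least six do not affect the first derivative, and each codimension-four component has $\alpha_1\alpha_2<0$, contributing $\mathrm{vol}(X_{a,j})/(\alpha_1\alpha_2)<0$ to $g_+'(0)-g_-'(0)$. Without it, a same-sign component would contribute a positive jump and the proof would fail, so this step must be stated and justified rather than asserted.
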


\subsection*{Hodge-Riemann bilinear relations and last step in proof of Theorem \ref{log-concavity-result}}

Now we have all the ingredients to conclude the proof of Theorem \ref{log-concavity-result}. Our proof depends essentially on the Hodge-Rieman bilinear relation on a K\"ahler manifold.

Recall that if $V$ is a vector space and and $A\subset V$ is a convex open subset, and if $f  \colon A \to \R$ is a Borel measurable map that is positive on $A$ almost everywhere, then we say that $f$ is \emph{log-concave} on $A$ if and only if $\log  f$ is a concave function on $A$. Moreover, if $f$ is smooth on $A$, and if $a$ is a fixed point in $A$,
then a simple calculation shows that $f$ is log-concave on $A$ if and only if
\[ f''(a+tv)\cdot f(a+tv)-(f')^2(a+tv)\leq 0,\,\,\,\textup{for all}\,\,\,\, v\in A.\]

\begin{proof}[Proof of Theorem \ref{log-concavity-result}] By Proposition \ref{Grahma-concavity}, to
establish the log\--concavity of ${\rm DH}_T$ on
$\mu(M)$, it suffices to show that the restriction of ${\rm log} \,{\rm DH}_T$ to each connected
component of the set of regular values of $\mu$ is concave. Let $\mathcal{C}$ be
such a component, let $v \in \frak{t^*}$, and let $\{a + tv\}$ be a line segment in $\mathcal{C}$
passing through a point $a \in \mathcal{C}$, where the parameter $t$ lies in some
small open interval containing $0$. We need to show that $$g(t) := {\rm DH}_T(a+tv)$$ is
log-concave, or equivalently, that
\begin{equation} \label{inequality} g''g-(g')^2\leq 0 .\end{equation}

Since the point $a$ is arbitrary, it suffices to show that equation (\ref{inequality})
holds at $t=0$.
By Theorem \ref{DHtheorem}, at $a+tv\in \frak{t^*}$, the Duistermaat\--Heckman function is
\begin{equation*}\begin{aligned} {\rm DH}_T(a+tv)=\int_{M_a}\dfrac{1}{2}(\omega_a+tc)^2. \end{aligned}
\end{equation*}
Here $M_a=\mu^{-1}(a)/T$ is the symplectic quotient taken at $a$ and $c$ is a closed two\--form
depending only on $v\in \frak{t^*}$. To prove (\ref{inequality}), we must show that
\begin{equation}\label{key-inequality} \int_{M_a}c^2 \int_{M_a}\omega_a^2 \,\,\leq\,\,  2\left(\int_{M_a}c \,\omega_a\right)^2.\end{equation}

Consider the primitive decomposition of the two\--form $c$ as \[c=\gamma+s\omega_a,\] where $s$ is a real number, and $\gamma$ is a
primitive two\--form. By primitivity,  we have that \[\gamma\wedge \omega_a=0.\] A simple calculation shows that
(\ref{key-inequality}) for $\gamma$ implies (\ref{key-inequality}) for $c$. But then (\ref{key-inequality}) becomes
\begin{equation}\label{key-inequality2} \int_{M_a}\gamma^2 \leq 0.\end{equation}

Note that in a symplectic orbifold the subset of orbifold singularities is of codimension greater than or equal to
$2$, cf. \cite[Prop. III.2.20]{A04}. In particular, it is of measure zero
with respect to the Liouville measure on the symplectic orbifold. Let $M_a^{{\rm reg}}$ be the complement
 of the set of orbifold singularities in $M_a$. It follows that
\[\int_{M_a}\gamma^2 =\int_{M_a^{\text{reg}}}\gamma^2.\]
 Note that $M_a^{\text{reg}}$ is a connected symplectic four\--manifold which admits the effective symplectic action of $T^2$ with symplectic orbits. By Lemma \ref{invariant-kahler-structure},
 $M_a^{\text{reg}}$ is a K\"ahler manifold. Moreover, $\omega_a$ is the K\"ahler form on $M^{{\rm reg}}_a$. In particular, it must be a
 $(1,1)$-form.

 Let $\langle \cdot,\cdot\rangle $ be the metric on the space of $(1,1)$\--forms induced by the K\"ahler metric, and let $*$ be the Hodge star operator induced by the K\"ahler metric. By Lemma \ref{key-lemma}, $c$ must be a $(1,1)$-form. As a result, $\gamma$ must be a real primitive $(1,1)$-forms. Applying Weil's identity, cf. \cite[Thm. 3.16]{W},  we get that
 \[ \gamma=-*\gamma.\] Consequently, we have that
 \begin{eqnarray}
  \int_{M_a^{\text{reg}}}\gamma^2&=&-\int_{M_a^{\text{reg}}}\gamma \wedge *\gamma \nonumber \\
  &=&-\int_{M_a^{\text{reg}}}
 \langle \gamma,\gamma \rangle \,\,
 \leq \,\,0. \nonumber
 \end{eqnarray}
This completes the proof of Theorem \ref{log-concavity-result}.

\end{proof}

\subsection*{Log-concavity conjecture for complexity one Hamiltonian torus actions}

We note that our proof of Theorem \ref{log-concavity-result} and Theorem \ref{theorem1} depend on the classification of symplectic four manifolds with a symplectic $T^2$ action \cite{P10}. However, in the following special case, we have an elementary proof which makes no uses of the classification results therein.

\begin{theorem} \label{complexity-one} Consider
an effective Hamiltonian action of an $(n-2)$\--dimensional torus $T^{n-2}$ on a
 compact, connected, $(2n-2)$\--dimensional symplectic manifold $(M,\omega)$.
 Let $T^2$ be a $2$\--dimensional torus
equipped with the standard symplectic structure.  We consider the following action of
$T^{n-2}$ on the $2n$\--dimensional product symplectic manifold $M\times T^2$,
\begin{equation} \label{product-action} g\cdot (m,t)= (g\cdot m, t), \,\,\,\textup{for all}\,\,\, g\in T^{n-2},\,m\in M,\, t\in T^2.\end{equation} Then the
Duistermaat-Heckman function ${\rm DH}_{T^{n-2}}$ of the $T^{n-2}$ action on $M\times T^2$ is log\--concave.
\end{theorem}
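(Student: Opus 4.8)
The plan is to reduce the statement to Theorem \ref{log-concavity-result} by checking that the product manifold $M\times T^2$, with the product action of $T^{n-2}$ and the "vertical" $T^2$-action on the second factor, satisfies all the hypotheses of that theorem. First I would observe that the $T^{n-2}$-action (\ref{product-action}) on $M\times T^2$ is Hamiltonian: if $\mu\colon M\to\mathfrak t^*$ is a momentum map for the $T^{n-2}$-action on $M$, then $\tilde\mu(m,t):=\mu(m)$ is a momentum map for the product action on $M\times T^2$, since the infinitesimal generators of the product action are tangent to the $M$-factor and $\omega\oplus\sigma_{T^2}$ restricts to $\omega$ there. Effectiveness is inherited because the action is trivial on the $T^2$-factor and effective on $M$. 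Compactness and connectedness of $M\times T^2$ are clear, and $\dim(M\times T^2)=2n$ with $T^{n-2}$ of complexity two.

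Next I would produce the complementary $2$-torus action: let $T^2$ act on $M\times T^2$ by translations on the second factor, $s\cdot(m,t)=(m,st)$. This action is symplectic (translations preserve the invariant symplectic form $\sigma_{T^2}$ on $T^2$), effective, and commutes with the $T^{n-2}$-action (\ref{product-action}) since the two actions operate on different factors. The orbits of this $T^2$-action are exactly the fibers $\{m\}\times T^2$, which are $2$-dimensional symplectic submanifolds of $M\times T^2$ because $(\omega\oplus\sigma_{T^2})|_{\{m\}\times T^2}=\sigma_{T^2}$ is a symplectic form on $T^2$. Hence every orbit is a symplectic submanifold, which is exactly condition (d)/(e) in the footnote to Theorem \ref{theorem1}, so the complementary symplectic $T^2$-action has symplectic orbits as required.

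With these verifications in place, Theorem \ref{log-concavity-result} applies directly to $(M\times T^2,\omega\oplus\sigma_{T^2})$ with the Hamiltonian $T^{n-2}$-action (\ref{product-action}) and the complementary symplectic $T^2$-action, and yields that ${\rm DH}_{T^{n-2}}$ is log-concave. I do not expect any serious obstacle here: the content of the theorem is that the hypothesis "there is a transversal symplectic $T^2$-symmetry with symplectic orbits" is automatically available for any product $M\times T^2$, so the only thing to be careful about is the bookkeeping of momentum maps and the identification of the orbits as symplectic submanifolds — routine once the product symplectic structure is written out. The one point worth a sentence in the writeup is why the $T^2$-action on the second factor has symplectic (rather than isotropic) orbits; this is immediate because $T^2$ is $2$-dimensional and carries its own area form, so the restriction of the product form to an orbit is nondegenerate. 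The reason this deserves a separate statement, as the paragraph preceding it notes, is that the reduction is so transparent that it sidesteps the classification theory of symplectic $4$-manifolds with symplectic $T^2$-actions used in the general case.
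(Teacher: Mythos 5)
Your reduction is logically sound: the product $M\times T^2$ with the action (\ref{product-action}) and the translation action of $T^2$ on the second factor does satisfy every hypothesis of Theorem \ref{log-concavity-result}, and since that theorem is proved beforehand and independently of Theorem \ref{complexity-one}, there is no circularity; the statement does follow from your verification. However, this is a genuinely different route from the paper's, and the difference is the whole point of the theorem. The paper's proof is deliberately elementary: it uses only Proposition \ref{Grahma-concavity} for the behaviour across walls, together with the observation that on each chamber of regular values the function $g(t)={\rm DH}_{T^{n-2}}(a+tv)$ is a polynomial of degree one, so that $g''=0$ and the chamber-wise inequality (\ref{inequality}) holds trivially. (The degree-one claim comes from the product structure: the reduced space at a regular value is $\bigl(\mu_M^{-1}(a)/T^{n-2}\bigr)\times T^2$ with the first factor $2$-dimensional, and the variation form $c$ of Theorem \ref{DHtheorem}(b) is pulled back from that $2$-dimensional factor, so $\int c^2=0$.) Consequently the paper's argument makes no use of Lemma \ref{invariant-kahler-structure}, i.e.\ of the classification of symplectic $T^2$-actions on $4$-manifolds from \cite{P10}, which is exactly what is announced in the paragraph preceding the theorem and exploited later: the proof of Theorem \ref{complexity-one-mcduff} substitutes Theorem \ref{complexity-one} for Theorem \ref{log-concavity-result} precisely because its proof avoids that machinery. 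Your derivation funnels everything through Theorem \ref{log-concavity-result}, whose proof rests on Lemmas \ref{invariant-kahler-structure} and \ref{key-lemma} and the Hodge--Riemann relations, so while it is shorter and correct as a proof of the bare statement, it forfeits the independence that justifies stating Theorem \ref{complexity-one} separately and would undercut the subsequent claim that Kim's theorem can be obtained without the classification results. Your closing sentence in fact misstates this: deducing the theorem from Theorem \ref{log-concavity-result} does not sidestep the classification theory, it imports it.
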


\begin{proof}
Note that there is an obvious commuting
$T^2$ symplectic action on $M$
\begin{equation} \label{product-action2} h\cdot (m,t)= ( m, h\cdot t), \,\,\,\textup{for all}\,\,\, h\in T^{2},\,m\in M,\, t\in T^2,\end{equation}
which is effective and which has symplectic orbits.
Let $\mu$ be the momentum map of the $T^{n-2}$ action on $M\times T^2$, and ${\rm DH}_{T^{n-2}}$ the the Duistermaat-Heckman function.

To prove that ${\rm DH}_{T^{n-2}}$  is log\--concave, in view of  Proposition \ref{Grahma-concavity},
it suffices to show that the restriction of $\ln {\rm DH}_{T^{n-2}}$ to each connected
component of the set of regular values of $\mu$ is concave. Using the same notation as in the proof of
Theorem \ref{log-concavity-result}, we only need to establish the validity of  inequality (\ref{inequality}).
However, a calculation shows that in the current case the function $g$ must be a degree
one polynomial, and so the inequality (\ref{inequality}) holds automatically. This completes the proof of Theorem \ref{complexity-one}.
\end{proof}

The following corollary is an immediate consequence of Theorem \ref{complexity-one}.

\begin{corollary}Suppose that there is an effective Hamiltonian action of an $(n-1)$\--dimensional torus $T^{n-1}$ on a compact, connected, $2n$\--dimensional symplectic manifold $(M,\omega)$. Then the Duistermaat-Heckman function ${\rm DH}_T$ of the $T$\--action is log-concave.
\end{corollary}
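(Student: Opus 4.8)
The plan is to deduce the corollary directly from Theorem \ref{complexity-one}, the only new input being the trivial remark that the Duistermaat\--Heckman function under consideration differs from the one treated there by an overall positive constant. Write $T=T^{n-1}$. I would apply Theorem \ref{complexity-one} with $n$ replaced by $n+1$: its hypotheses then read exactly ``$T^{n-1}$ acts effectively and Hamiltonianly on the compact, connected, $2n$\--dimensional symplectic manifold $(M,\omega)$'' (no side conditions beyond those in our corollary, and $n+1\geq 3$ under the paper's standing assumption $n\geq 2$), and its conclusion is that the Duistermaat\--Heckman function of the product $T^{n-1}$\--action on $(M\times T^2,\,\omega\oplus\sigma)$ given by \textup{(\ref{product-action})} is log\--concave, where $\sigma$ is the standard form on $T^2$.

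It then remains to compare the two Duistermaat\--Heckman functions. Since $T^{n-1}$ acts trivially on the $T^2$\--factor, its infinitesimal generators on $M\times T^2$ are $(X_M,0)$, so the momentum map of the product action is $\widehat{\mu}=\mu\circ\mathrm{pr}_M$; in particular $a$ is a regular value of $\widehat{\mu}$ exactly when it is one of $\mu$, and then $\widehat{\mu}^{-1}(a)/T^{n-1}=M_a\times T^2$ with reduced form $\omega_a\oplus\sigma$, where $M_a=\mu^{-1}(a)/T^{n-1}$. Now $M_a$ is a $2$\--dimensional symplectic orbifold and $T^2$ is a surface, so $\omega_a^2=0$ and $\sigma^2=0$; hence by Theorem \ref{DHtheorem}(a), applied with $n$ replaced by $n+1$ and $k=n-1$,
\[
\mathrm{DH}^{M\times T^2}_{T}(a)=\int_{M_a\times T^2}\frac{(\omega_a\oplus\sigma)^{2}}{2!}=\int_{M_a\times T^2}\omega_a\wedge\sigma=\Big(\int_{T^2}\sigma\Big)\int_{M_a}\omega_a=C\cdot\mathrm{DH}^{M}_{T}(a),
\]
with $C=\int_{T^2}\sigma>0$ and $\mathrm{DH}^{M}_{T}(a)=\int_{M_a}\omega_a$ the Duistermaat\--Heckman function of the given action on $M$ (as in Theorem \ref{DHtheorem}(a) with $n-k=1$). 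The orbifold singular locus of $M_a$ has measure zero, exactly as in the proof of Theorem \ref{log-concavity-result}, so these integrals are well defined. Since log\--concavity is unaffected by multiplication by a positive constant, the log\--concavity of $\mathrm{DH}^{M\times T^2}_{T}$ forces that of $\mathrm{DH}^{M}_{T}$, which is the assertion.

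There is essentially no obstacle: the content is already contained in Theorem \ref{complexity-one}. Should one prefer a self\--contained argument, the only feature actually used is that for a complexity\--one Hamiltonian torus action the reduced spaces are surfaces, so on each chamber the local Duistermaat\--Heckman function $g(t)=\mathrm{DH}_T(a+tv)=\int_{M_a}(\omega_a+tc)$ is affine in $t$ (cf. Theorem \ref{DHtheorem}(b)); the logarithm of a positive affine function is concave, and Proposition \ref{Grahma-concavity} supplies the inequality $g_+'(0)\leq g_-'(0)$ needed to patch the chambers across interior walls. Either route yields the corollary.
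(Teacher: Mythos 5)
Your proposal is correct and is exactly the route the paper intends: the paper presents this corollary as an ``immediate consequence'' of Theorem \ref{complexity-one}, and your observation that the momentum map of the product action is $\mu\circ\mathrm{pr}_M$, so that $\mathrm{DH}^{M\times T^2}_{T}=\bigl(\int_{T^2}\sigma\bigr)\cdot\mathrm{DH}^{M}_{T}$ and log\--concavity transfers, merely fills in the word ``immediate''. Your self\--contained variant (the Duistermaat\--Heckman function is affine on each chamber for a complexity\--one action, patched across interior walls by Proposition \ref{Grahma-concavity}) is the same mechanism the paper itself uses to prove Theorem \ref{complexity-one}.
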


\section{Proof of Theorem \ref{theorem1}} \label{MC}

We will use Theorem \ref{log-concavity-result} and several other tools (which we explain next), in order to prove Theorem \ref{theorem1}.

\subsection*{Logarithmic concavity of torus valued functions}

We first extend the notion of log\--concavity to functions defined
on a $k$\--dimensional torus $T^k \cong \ \R^k/\Z^k$.  Consider the covering map
\[ \exp : \R^k\rightarrow T^k,\,\,\,\,(t_1,\cdots, t_k)\mapsto ({\rm e}^{{\rm i}2\pi t_1},\cdots,{\rm e}^{{\rm i}2\pi t_k}).\]

\begin{definition}
We say that a map $f: T^k\rightarrow (0, \infty)$  is \emph{log-concave at a point $ p\in T^k$}
if there is a point $x\in \R^k$ with $\exp(x)=p$ and an open set $V\subset \R^k$ such that
$$
\exp\mid_V: V\rightarrow
\exp(V)$$ is a diffeomorphism and such that the logarithm of
$$\tilde{f}:= f\circ \exp$$ is a concave function
on $V$. We say that $f$ is \emph{log-concave on $T^k$} if it is log-concave at every point of $T^k$.
\end{definition}

The log\--concavity of the function $f$ does not depend on the choice of $V$. Indeed, suppose that there are two points $x_1, x_2\in \R^k$ such that $$\exp x_1=\exp x_2=p,$$ and
two open sets $x_1\in V_1\subset \R^k$ and $x_2\in V_2\subset \R^k$ such that both $$\exp\mid_{V_1}: V_1\rightarrow \exp(V_1)$$
and $$\exp\mid_{V_2}: V_2\rightarrow \exp(V_2)$$ are diffeomorphisms. Set $$\tilde{f}_i=f\circ \exp\mid_{V_i},\,\,\, i=1,2.$$
Then there exists $(n_1,\cdots n_k)\in \Z^k$ such that
\[\tilde{f}_1(t_1,\cdots t_k)=\tilde{f}_2(t_1+n_1,\cdots,t_k+n_k), \,\,\,\textup{for all}\,\, \in (t_1,\cdots, t_k)\in V_1.\]

Now let $$T^k\cong \R/\Z\oplus \R/\Z\oplus \cdots \oplus \R/\Z$$ be a $k$ dimensional torus,
and $\lambda$ the standard length form on $S^1 \cong \R/\Z$. Throughout this section we denote
by $\Theta$ the $\frak{t}^*$\--valued form
\begin{equation}\label{canonical-form}\lambda\oplus \lambda\oplus \cdots \oplus \lambda \end{equation}
on $T^k$. Clearly, $\Theta$ is invariant under the $T^k$\--action on $T^k$ by multiplication.

Now consider the action of a $k$\--dimensional torus $T^{k}$
on a $2n$\--dimensional symplectic manifold $(M,\omega)$.
Let $\frak{t}$ be the Lie algebra of $T^k$, let
$\frak{t}^*$ be the dual of $\frak{t}$, and let $\Theta$
be the canonical $T^{k}$\--invariant $\frak{t}^*$\--valued one form
given as in (\ref{canonical-form}).

\begin{definition}\label{generalized-momentum-map}
  We say that $$\Phi: M\rightarrow T^{k}$$ is a \emph{generalized momentum map} if
\[ \iota_{X_M}\omega =\langle X,\Phi^*\Theta \rangle,\]
where $X\in \frak{t}$, and $X_M$ is the vector field on $M$ generated
by the infinitesimal action of $X$ on $M$.
\end{definition}

\subsection*{Duistermaat\--Heckman densities of Torus valued momentum maps }

The following fact is a straightforward generalization of a well-known result due to D. McDuff concerning the existence of circle valued momentum maps.

\begin{theorem}[McDuff \cite{Mc1986}] \label{m}
Let a $k$\--dimensional torus $T^k \cong \mathbb{R}^k/\mathbb{Z}^k$ act
symplectically on the compact symplectic
manifold $(M, \omega)$. Assume that the symplectic form
$\omega$ represents an integral cohomology class in
${\rm H}^2(M,\Z)$. Then:
\begin{itemize}
\item[{\rm (1)}]
 either the action admits a standard $\R^k$\--valued
momentum map or, if not,
\item[{\rm (2)}]
there
exists a $T^k$\--invariant
symplectic form $\omega'$ on $M$   that
admits a $T^k$\--valued momentum map
$\Phi \colon M \rightarrow T^k$.
\end{itemize}
\end{theorem}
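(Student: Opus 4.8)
The plan is to adapt McDuff's argument for circle actions \cite{Mc1986} and run it simultaneously over the $k$ circle factors of $T^k$. Fix a basis $X_1,\dots,X_k$ of the integral lattice $\ker(\exp)\subset\mathfrak{t}$, so that $T^k=S^1_1\times\dots\times S^1_k$ with $S^1_j$ the circle generated by $X_j$, and put $\alpha_j:=\iota_{(X_j)_M}\omega\in\Omega^1(M)$. Since the action is symplectic, $\mathcal L_{(X_j)_M}\omega=0$, so by Cartan's formula ${\rm d}\alpha_j=\mathcal L_{(X_j)_M}\omega-\iota_{(X_j)_M}{\rm d}\omega=0$; thus each $\alpha_j$ is closed. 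This already produces the dichotomy: if $[\alpha_j]=0$ in ${\rm H}^1_{\rm dR}(M)$ for every $j$, choose $T^k$-invariant primitives $\mu_j$ (take any primitive and average over $T^k$; this leaves the differential unchanged because $\omega$ and $(X_j)_M$ are $T^k$-invariant), and then $\mu:=\sum_j\mu_j\,X_j^*\colon M\to\mathfrak{t}^*$ is a standard $\mathbb R^k$-valued momentum map, which is alternative (1). So we may assume from now on that some $[\alpha_j]\neq 0$.

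The heart of the proof is to show that every $[\alpha_j]$ has integral periods, i.e. lies in the image of ${\rm H}^1(M,\mathbb Z)\to{\rm H}^1_{\rm dR}(M)$; this is where the hypothesis $[\omega]\in{\rm H}^2(M,\mathbb Z)$ enters. Given a smooth loop $\gamma\colon S^1\to M$, consider the smooth map
$$T_{\gamma,j}\colon S^1\times S^1\to M,\qquad (s,t)\longmapsto\exp(tX_j)\cdot\gamma(s),$$
which is well defined precisely because $X_j$ lies in $\ker(\exp)$. Using that $\omega$ is $T^k$-invariant and that $(X_j)_M$ is invariant under its own flow $t\mapsto\exp(tX_j)$, a short computation gives $T_{\gamma,j}^*\omega=\big(\iota_{(X_j)_M}\omega\big)\!\big(\gamma'(s)\big)\,{\rm d}s\wedge{\rm d}t$, whence
$$\int_\gamma\alpha_j=\int_{S^1\times S^1}T_{\gamma,j}^*\omega=\big\langle[\omega],(T_{\gamma,j})_*[S^1\times S^1]\big\rangle\in\mathbb Z.$$
Since $\gamma$ is arbitrary (and an $\mathbb R$-valued class annihilates the torsion of ${\rm H}_1$), $[\alpha_j]$ has integral periods.

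It then remains to assemble the momentum map. Fixing a basepoint $m_0$, the integrality just proved makes
$$\Phi_j\colon M\to\mathbb R/\mathbb Z,\qquad \Phi_j(m)=\int_{m_0}^m\alpha_j\pmod{\mathbb Z},$$
well defined and smooth, with $\Phi_j^*\lambda=\alpha_j$, where $\lambda$ is the length form on $S^1=\mathbb R/\mathbb Z$. Setting $\Phi:=(\Phi_1,\dots,\Phi_k)\colon M\to T^k$ we get $\Phi^*\Theta=\sum_j\alpha_j\,X_j^*$, so that for $X=\sum_j c_jX_j\in\mathfrak{t}$,
$$\langle X,\Phi^*\Theta\rangle=\sum_j c_j\alpha_j=\iota_{X_M}\omega,$$
i.e. $\Phi$ is a generalized momentum map in the sense of Definition \ref{generalized-momentum-map} for $\omega$ itself, and alternative (2) holds with $\omega'=\omega$. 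I expect the two delicate points to be: first, the identity $\int_{S^1\times S^1}T_{\gamma,j}^*\omega=\int_\gamma\alpha_j$, which requires care with the invariance properties invoked; and second, if one insists in addition that the torus-valued momentum map be $T^k$-invariant (this is not demanded by Definition \ref{generalized-momentum-map} as stated, but is the natural analogue of the classical convention), then one must absorb the constants $\omega\big((X_i)_M,(X_j)_M\big)$ — the obstruction to invariance of $\Phi$ — by passing to a suitable $T^k$-invariant symplectic form, which is exactly the role played by $\omega'$ in McDuff's original formulation.
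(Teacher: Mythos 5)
Your proposal is correct and follows essentially the same route the paper takes (and attributes to McDuff and Pelayo--Ratiu): the integrality of the periods of each $\iota_{(X_j)_M}\omega$ is extracted from $[\omega]\in{\rm H}^2(M,\mathbb{Z})$ by integrating $\omega$ over the $2$\--torus swept out by a loop under the circle generated by $X_j$, and the torus\--valued momentum map is then defined componentwise by exactly the path\--integral formula (\ref{mom}), so that conclusion (2) holds with $\omega'=\omega$. The only blemish is a harmless orientation sign in your expression for $T_{\gamma,j}^*\omega$, which affects neither the integrality argument nor the conclusion.
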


When $\omega$ is integral and $k=1$,  the $1$\--forms $\iota_{X}  \omega$ is also integral
and the map in Theorem \ref{m} is defined as follows.
Pick a point  $m_0 \in M$, let $\gamma_m$ be an arbitrary
smooth path connecting $m_0$ to $m$ in $M$,
and define the map $\Phi:M \rightarrow S^1$ by
\begin{eqnarray} \label{mom}
\Phi(m):= \left[ \int_{\gamma _m}\iota_{X}  \omega\right] \in \mathbb{R}/\mathbb{Z}.
\end{eqnarray}
One can check that $\Phi$ is well\--defined.

\begin{remark}
When $k=1$, a detailed proof of Theorem \ref{m} may be found in Pelayo\--Ratiu \cite{PeRa2010}.
The argument given in \cite{PeRa2010} extends to actions of higher dimensional tori, as pointed
out therein. As noted in \cite{Mc1986}, the usual symplectic quotient construction carries through
for generalized momentum maps.
\end{remark}

Now we will need the following result, which is Theorem 3.5 in  \cite{R}.

\begin{theorem}[Rochon \cite{R}] \label{ro}
 Let a compact connected Lie group $G$  act on a symplectic manifold $(M,\omega)$
 symplectically. Then the $G$\--action on $(M,\omega)$ is Hamiltonian if and only if
 there exists a symplectic form $\sigma$ on $M$ such that the $G$\--action on $(M,\sigma)$ is Hamiltonian.
 \end{theorem}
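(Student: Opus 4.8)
The plan is to prove both implications, the nontrivial one being that if some symplectic form $\sigma$ on $M$ makes the $G$-action Hamiltonian, then the $G$-action on $(M,\omega)$ is Hamiltonian as well. First I would reduce to the case of a torus: averaging any invariant metric, one sees that a compact connected Lie group $G$ acts on $(M,\sigma)$ in a Hamiltonian way if and only if its maximal torus does and the relevant cohomological obstruction vanishes; more precisely, the obstruction to a symplectic $G$-action being Hamiltonian lives in $\mathrm{H}^1_{\mathrm{dR}}(M)^G \otimes \mathfrak{g}^*$, and since $G$ is connected the $G$-coinvariants of $\mathrm{H}^1_{\mathrm{dR}}(M)$ coincide with the whole space (the action on cohomology is trivial). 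So the statement is really about the vanishing of a class $c_\omega \in \mathrm{H}^1_{\mathrm{dR}}(M) \otimes \mathfrak{g}^*$, whose component in the $X$-direction is $[\iota_{X_M}\omega]$.

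The key observation is that this obstruction class depends only on the cohomology class $[\omega] \in \mathrm{H}^2_{\mathrm{dR}}(M)$ and not on $\omega$ itself, \emph{provided} $\omega$ is $G$-invariant. Indeed, for $X \in \mathfrak{g}$ the Cartan formula gives $\mathcal{L}_{X_M}\omega = \mathrm{d}\,\iota_{X_M}\omega$, and $G$-invariance of $\omega$ forces $\mathcal{L}_{X_M}\omega = 0$, so $\iota_{X_M}\omega$ is closed; its cohomology class is a linear functional of $[\omega]$ via the contraction pairing $\mathrm{H}^2_{\mathrm{dR}}(M) \to \mathrm{H}^1_{\mathrm{dR}}(M)$, $[\beta] \mapsto [\iota_{X_M}\beta]$ (well-defined on invariant representatives, and on invariant classes since every de Rham class has an invariant representative by averaging). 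Thus the plan is: (1) average $\sigma$ over $G$ to assume it is $G$-invariant, which does not change $[\sigma]$ and keeps the $G$-action Hamiltonian on the averaged form, at least after noting that the averaged form is still symplectic in a neighborhood argument or by a Moser-type deformation; (2) show that $[\omega]$ and $[\sigma]$ can be connected, or rather, directly compare the obstruction classes. The cleanest route is to note that the obstruction for $(M,\omega)$ is $c_\omega(X) = [\iota_{X_M}\omega]$ and likewise for $\sigma$; if these were known to be equal we would be done, but in general $[\omega] \neq [\sigma]$.

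The genuine content, and the main obstacle, is therefore to upgrade "Hamiltonian for some $\sigma$" to "Hamiltonian for $\omega$" when the cohomology classes genuinely differ. Here I would invoke the structure of symplectic actions with fixed points: if the $G$-action on $(M,\sigma)$ is Hamiltonian then $M^G \neq \varnothing$, and the existence of a fixed point is a property of the smooth action alone, independent of which symplectic form we use. For a symplectic $G$-action, the obstruction class $c_\omega$ can be evaluated on loops, and one shows that if $M^G \neq \varnothing$ then $[\iota_{X_M}\omega]$ pairs trivially against $\mathrm{H}_1(M;\mathbb{Z})$: given a loop $\gamma$ and a fixed point $m_0$, one uses the $G$-averaging of a homotopy or the explicit formula $\Phi(m) = [\int_{\gamma_m}\iota_{X_M}\omega]$ as in \eqref{mom}, checking that the period around $\gamma$ vanishes because one can homotope $\gamma$ through the $G$-orbit directions. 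Concretely: for $X$ in the Lie algebra, $\int_\gamma \iota_{X_M}\omega = \int_\gamma \iota_{X_M}\omega$ is unchanged under the flow of $X_M$, and shrinking a loop near a fixed point shows the period is zero; doing this $G$-equivariantly and for all $X$ simultaneously gives $c_\omega = 0$, hence $\omega$ admits a momentum map. The hard part will be making the homotopy/period argument uniform in $X$ and rigorous in the orbifold-free smooth setting — i.e. genuinely showing that a single fixed point kills the entire obstruction class for \emph{any} invariant symplectic form — and then observing that the converse implication is trivial since we may take $\sigma = \omega$.
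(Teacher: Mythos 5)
Your reduction of the problem to the torus case and your observation that, for a \emph{$G$-invariant} form, the obstruction classes $[\iota_{X_M}\omega]\in {\rm H}^1_{\rm dR}(M)$ depend only on the cohomology class of the form are both fine, but the decisive step of your proposal is false, and it fails for a reason that is at the heart of this very paper. You replace the hypothesis ``the action is Hamiltonian for some $\sigma$'' by the much weaker statement ``$M^G\neq\varnothing$'' (a property of the smooth action alone) and then claim that the existence of a fixed point forces $[\iota_{X_M}\omega]=0$ for \emph{every} invariant symplectic form. McDuff's example \cite{Mc1986}, discussed in the Introduction and in Section \ref{remarks}, is a symplectic non-Hamiltonian $S^1$-action on a compact connected symplectic $6$-manifold with non-empty fixed point set; by the very theorem you are trying to prove, that action is not Hamiltonian for \emph{any} symplectic form it preserves, so its obstruction class is non-zero for every invariant form even though fixed points exist. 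Hence no argument that uses only $M^G\neq\varnothing$ can succeed --- this is precisely why Theorem \ref{theorem1} of the paper requires the additional transversal symplectic $T^2$-symmetry to pass from fixed points to Hamiltonian. Concretely, your period argument collapses because $[\iota_{X_M}\omega]$ is detected by its periods over loops generating ${\rm H}_1(M;\mathbb{R})$, and a non-contractible loop cannot be shrunk to a point near a fixed point; shrinking only establishes the vanishing of periods over contractible loops, which is automatic. (A secondary point: as stated the theorem does not assume $M$ compact, and without compactness even the implication ``Hamiltonian $\Rightarrow$ fixed points'' fails, e.g.\ the rotation action on $T^*S^1$.)

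Note also that the paper does not prove this statement at all: it is quoted from Rochon \cite{R} (Theorem 3.5 there), and any correct proof must use the Hamiltonian hypothesis on $(M,\sigma)$ in an essential way rather than only the existence of fixed points, since the latter is strictly weaker. So the converse direction genuinely requires a different mechanism than the one you propose; the easy direction ($\sigma=\omega$) is of course trivial, as you say.
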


It follows from Theorem \ref{ro} that in order to show that a
symplectic $S^1$-action on a symplectic manifold $(M,\omega)$ is Hamiltonian,
it suffices to show it under the assumption that the cohomology class $[\omega]$ is
integral, see \cite[Remark 2.1]{KKK}.

We are ready to introduce the following notion.

\begin{definition} Suppose that there is an effective symplectic action of a $k$\--dimensional torus $T^k$
on a $2n$\--dimensional compact symplectic manifold $(M,\omega)$ with a generalized momentum
map $\Phi: M\rightarrow T^k$. The \emph{Duistermaat-Heckman measure} is the push-froward of the
Liouville measure on $M$ by $\Phi$.
Its Density function is the \emph{Duistermaat-Heckman function}.

\end{definition}

Analogous to the case of Hamiltonian actions, we have the following result.

\begin{proposition}Suppose that there is an effective symplectic action of a $k$\--dimensional torus $T^k$
on a $2n$\--dimensional  compact symplectic manifold $(M,\omega)$ with a generalized momentum
map $\Phi: M\rightarrow T^k$. Let $(T^k)_{\rm reg} \subset T^k$ be the set of regular values of $\Phi$.
Then we have that

\[ {\rm DH}_{T^k}(a)= \int_{M_a} \omega^{n-k}_a ,\,\,\,\textup{for all}\,\, a\in (T^k)_{\rm reg}, \]
where $M_a:=\mu^{-1}(a)/T^k$ is the symplectic quotient taken at $a$,
and $\omega_a$ is the reduced symplectic form.

\end{proposition}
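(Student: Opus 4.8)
The plan is to reduce the statement to the classical Duistermaat--Heckman theorem (Theorem \ref{DHtheorem}(a)) by observing that near a regular value the generalized momentum map is, after lifting the torus to its Lie algebra, an honest $\R^k$\--valued momentum map, and that both the Liouville push\--forward density and the reduced symplectic form are computed locally over the base.

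First I would fix $a\in (T^k)_{\rm reg}$ and choose a contractible open neighbourhood $U\ni a$ in $T^k$ together with an open set $\widetilde U\subset\R^k$ on which $\exp\mid_{\widetilde U}\colon\widetilde U\to U$ is a diffeomorphism; write $\psi:=(\exp\mid_{\widetilde U})^{-1}$. On the open, $T^k$\--invariant submanifold $M_U:=\Phi^{-1}(U)$ of $M$ I set $\widetilde\Phi:=\psi\circ\Phi\mid_{M_U}\colon M_U\to\widetilde U\subset\R^k$, viewed as a $\mathfrak{t}^*$\--valued map. The key computation is that $\exp^*\Theta$ is the standard translation\--invariant $\mathfrak{t}^*$\--valued one\--form $\mathrm{d}t_1\oplus\cdots\oplus\mathrm{d}t_k$ on $\R^k$, because $\exp^*\lambda=\mathrm{d}t$ for the length form $\lambda$ on $S^1\cong\R/\Z$; hence $\Phi^*\Theta=\widetilde\Phi^*(\exp^*\Theta)=\mathrm{d}\widetilde\Phi$ on $M_U$, and the defining identity $\iota_{X_M}\omega=\langle X,\Phi^*\Theta\rangle$ becomes $\iota_{X_M}\omega=\mathrm{d}\langle\widetilde\Phi,X\rangle$ for all $X\in\mathfrak{t}$. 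So $\widetilde\Phi$ is a genuine momentum map for the $T^k$\--action on $M_U$, and $\widetilde a:=\psi(a)$ is a regular value of it (the same Hodge\--theoretic argument as in the Hamiltonian case shows $\mathrm{d}\widetilde\Phi_m$ is surjective exactly where all infinitesimal stabilizers vanish, so $M_{\widetilde a}:=\widetilde\Phi^{-1}(\widetilde a)/T^k$ is a symplectic orbifold with Marsden--Weinstein reduced form $\omega_{\widetilde a}$).

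Next I would compare the measures: the Duistermaat--Heckman measure of $\Phi$, restricted to $U$, is the push\--forward under $\exp\mid_{\widetilde U}$ of the push\--forward of the Liouville measure of $M_U$ by $\widetilde\Phi$. Since the density of a Liouville push\--forward at a regular value depends only on the behaviour near the level set, and the level set $\widetilde\Phi^{-1}(\widetilde a)=\Phi^{-1}(a)$ is compact (being closed in the compact $M$), the computation underlying Theorem \ref{DHtheorem}(a) applies verbatim to $\widetilde\Phi$ and gives that the density at $\widetilde a$ equals $\int_{M_{\widetilde a}}\omega_{\widetilde a}^{\,n-k}$ in the normalisation used throughout the paper. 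Finally I would identify $M_{\widetilde a}=\widetilde\Phi^{-1}(\widetilde a)/T^k=\Phi^{-1}(a)/T^k=M_a$, with the same reduced form $\omega_a$ (reduction being local at the level set), to obtain ${\rm DH}_{T^k}(a)=\int_{M_a}\omega_a^{\,n-k}$.

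The genuinely delicate point is not the algebra --- the passage from the generalized to an ordinary momentum map on $M_U$ is immediate once $\exp^*\Theta$ is written down --- but the bookkeeping needed to apply the Duistermaat--Heckman machinery on the non\--compact piece $M_U$. One must either invoke McDuff's symplectic\--quotient construction for generalized momentum maps (quoted after Theorem \ref{m}), checking that it produces exactly $(M_a,\omega_a)$ and agrees with Marsden--Weinstein reduction of $\widetilde\Phi$, or argue directly that the push\--forward density is a local quantity over the base and that $\Phi^{-1}(a)$ is compact, so that no properness issue arises. I expect making this locality/compactness reduction precise to be the step requiring the most care.
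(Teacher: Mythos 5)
Your argument is correct, and it is exactly the route the paper intends: the paper states this proposition without proof, merely asserting it is ``analogous to the Hamiltonian case'' and relying on the earlier remark (after Theorem~\ref{m}) that the symplectic quotient construction carries over to generalized momentum maps. Your local lifting through $\exp$ near a regular value, which turns $\Phi$ into an honest $\mathfrak{t}^*$-valued momentum map on $\Phi^{-1}(U)$ and then invokes the locality of the Duistermaat--Heckman density at the compact level set, simply supplies the details the authors leave implicit, so no further comparison is needed.
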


\subsection*{Symplectic cuttings}
We also need the construction of symplectic cutting which was first introduced by Lerman \cite{Le95}.
Next we present a review of this construction.

      Let $\mu : M\rightarrow  \frak{t^*}$ be a momentum map for
an effective action of a $k$\--dimensional torus $T^k$ on a symplectic manifold $(M,\omega)$ and let
$\frak{t}_{\mathbb{Z}} \subset \frak{t}$ denote the integral
lattice.

Choose $k$ vectors $v_j \in \frak{t}_{\mathbb{Z}}$, $1\leq j\leq k$. The form
$$\omega-{\rm i}\displaystyle \sum_j
{\rm d}z_j\wedge {\rm d}\overline{z}_j$$
 is a symplectic form on the manifold $M\times \mathbb{C}^k$. The map
 $$\nu : M\times \mathbb{C}^k \rightarrow \R^k$$ with $j$-th component
\[\nu_j(m, z) =\langle \nu(m), \,v_j\rangle -\vert z_j \vert^2\]
is a momentum map for the $T^k$\--action on $M\times \mathbb{C}^k$.

 For any $b=(b_1,b_2,\cdots, b_k) \in \R^k$, consider a convex rational polyhedral
set
\begin{equation} \label{cutting}
P = \{x \in \frak{t}^*\,\, \vert\,\, \langle x, v_i \rangle \geq b_i,\,\,\forall\, 1 \leq i\leq k\}.\end{equation}
The \emph{symplectic cut of $M$ with respect to $P$} is the reduction of
$M\times \mathbb{C}^k$ at $b$. We denote it by $M_P$ .

  Similarly, for any two regular values $b=(b_1,\cdots, b_k), c=(c_1,\cdots, c_k)\in \R^k$ with
  $b_i\leq c_i$ for all
  $1\leq i\leq k$, we define
   a convex rational polyhedral set
   \[  Q = \{x \in \frak{t}^* \,\, \vert\,\, b_i\leq \langle x, v_i\rangle \leq c_i,\,\,\forall\, 1 \leq i\leq k\},\]
   and define the \emph{symplectic cut of $M$ with respect to $Q$} to be the reduction of
$M_P$  (as given in the sentence below (\ref{cutting})) at $c$.

We note that if $M$ is in addition compact, then  both
$M_P$ and $M_Q$ are compact.

\subsection*{Duistermaat\--Heckman jump: concluding the proof of Theorem \ref{theorem1}}

\begin{proof}[Proof of  Theorem \ref{theorem1}]
We divide the proof into three steps.
Suppose that an $(n-2)$\--dimensional torus $T^{n-2}$ act on a
$2n$\--dimensional compact connected symplectic manifold $(M,\omega)$ with
fixed points, and that there is a commuting
$T^2$ symplectic action on $M$ which has a symplectic orbit.
\\
\\
{\bf Step 1}. By Theorem \ref{m} and Theorem \ref{ro}, we may assume that there is a generalized momentum map $\Phi: M\rightarrow T^{n-2}$. To prove that the action of $T^{n-2}$ is Hamiltonian, it suffices to prove that
for any splitting of $T^{n-2}$ of the form $T^{n-2}=S^1\times H,$ where $H$ is a $(n-3)$\--dimensional torus,
the action of $S^1$ is Hamiltonian.

Let
$\pi_1: T^{n-2}\rightarrow S^1$ and $\pi_2: T^{n-2} \rightarrow H$  be the projection maps. Then
 \[\phi:=\pi_1 \circ \Phi: M\rightarrow S^1\,\,\,\,\,\text{and}\,\,\,\,\,\phi_H:=\pi_2\circ \Phi: M\rightarrow H\] are generalized momentum maps with respect to the $S^1$\--action and  the
$H$\--action on $M$ respectively. Without loss of generality, we may assume that
the generalized momentum map $\phi$ has no local extreme value, cf. \cite[Lemma 2]{Mc1986}.

Let $h\in H$ be a regular value of $\phi_H$. Denote by
${\rm DH}_{T^{n-2}}: T^{n-2}\rightarrow \R$ the Duistermaat-Heckman function of the $T^{n-2}$ action on $M$. Define
\[ f: S^1 \rightarrow \R,\,\,\,\,\,\,\, f(x):={\rm DH}_{T^{n-2}}(x, h).\] To prove that the action of $S^1$ is Hamiltonian, it suffices to show that $f$ is strictly log-concave. This is because that if $f$ is strictly log-concave, then the generalized momentum map $\phi: M\rightarrow S^1$ cannot be surjective, and hence the action of $S^1$ on $M$ is Hamiltonian.

We will divide the rest of the proof into two steps, and we will use the following notation.  Consider the exponential map $\exp: \R\rightarrow S^1$, $t\mapsto {\rm e}^{{\rm i}2\pi t}$.
By abuse of notation, for any open connected interval $I=(b,c)\subset \R$ such that $\exp (I)\nsubseteq S^1$, we will identify $I$ with its image in $S^1$ under the exponential map.
 \\
 \\
{\bf Step 2}.  We first show that if $I_1=(b_1,c_1) \nsubseteq S^1$ is an open connected set consisting
of regular values of $\phi$, then $f$ is log\--concave on $I_1$.

  Indeed, suppose that for any $2\leq i\leq n-2$, $I_i=(b_i,c_i)\nsubseteq S^1$ is an open connected
  set such that
$$\mathcal{O}=I_2\times \cdots \times I_{n-2}\ni h$$ consists of regular values of $\phi_H$.
Then $I_1\times \mathcal{O} \nsubseteq T^{n-2}$ is a connected open set that consists of regular values of
$\Phi: M\rightarrow T^{n-2}$. Moreover, the action of $T^{n-2}$ on $$W=\Phi^{-1}(I\times \mathcal{O})$$ is Hamiltonian. Set
\[ Q= \{(x_1,\cdots, x_{n-2}) \in \R^{n-2}\,\,\, \vert\,\,\, b_i\leq x_i \leq c_i,\,\,\forall\, 1 \leq i\leq n-2\}.\]

The symplectic cutting  $W_Q$ of $W$ with respect to $Q$ is a compact Hamiltonian manifold from $W$, and,
on the interior of $Q$, the Duistermaat-Heckman function of $W_Q$ agrees with that of $W$.  Consequently, the
log\--concavity of the function $f$ follows from Theorem \ref{log-concavity-result}.
\\
\\
{\bf Step 3}. We show that if  $I_1=(b_1,c_1) \nsubseteq S^1$ is a connected open set which contains a unique critical value $a$ of $\phi$, then $f$ is strictly log-concave on $I_1$. We will use the same notation $W$ and $W_Q$ as in the Step 2. By definition, to show that
$f$ is strictly log-concave on $I_1$, it suffices to show that \[ \tilde{f}=f\circ \exp:  \R\rightarrow \R\]  is strictly log-concave on the pre-image of $I_1$ in $\R$ under the exponential map $\exp$. Again, by abuse of notation, we would not distinguish $I_1$ and its pre-image in $\R$, and we will use the same notation $a$ to denote the pre-image of $a \in I_1\nsubseteq S^1$ in $\exp^{-1}(I_1)$ .

To see the existence of such interval $I_1$, we need the assumption that the fixed point sets of $T^{n-2}$ is non-empty.
Since the fixed point submanifold of the action of $T^{n-2}$ on $M$  is a subset of $M^{S^1}$, it follows that $M^{S^1}$ is non-empty.
Let $X$ be a connected component of $M^{S^1}$. Choose $a\in S^1$ to be the image of $X$ under the generalized momentum map $\phi$.
Then a sufficiently small  connected open subset that contains $a$ will contain no other critical values of $\phi$.

Note that $X$ is a symplectic submanifold of $M$  which is invariant under the action of $H$, and that there are two symplectic manifolds associated to the point $(a,h)\in T^k=S^1\times H$: the symplectic quotient of $M$ (viewed as an $H$-space) at
$h$, and the symplectic quotient of $X$ (viewed as $T^k/S^1$-space) at $(a,h)$.  We will denote these two quotient spaces by $M_h$ and $X_h$ respectively. An immediate calculation shows that the dimension of  $M_h$ is six.

Since the action of $S^1$ commutes with that of $H$, by Lemma \ref{level-set} and Remark \ref{level-set-remark}, the action of $S^1$ preserves
the level set of the generalized momentum map $\phi_H$. Consequently, there is an induced action of $S^1$ on
$M_h$. Since $\phi: M\rightarrow S^1$ has a constant value on any $H$ orbit in $M$, it descends to a generalized momentum map $$\tilde{\phi}:M_h\rightarrow S^1.$$ It is easy to see that $X_h$ consists of the critical points of the generalized momentum map $\tilde{\phi}: M_h\rightarrow S^1$.  Moreover, since $\phi$ has no local extreme values in $M$, it follows that $\tilde{\phi}$ has no local extreme value in $M_h$. Thus for dimensional reasons, $X_h$ can only be of codimension four or six.

Now observe that the commuting symplectic action of $T^2$ on $M$ descends to a symplectic action on $M_h$ which commutes with the action of $S^1$
on $M^{S^1}$. It follows that $X_h$ is invariant under the induced action of $T^2$ on $M_h$ and so must contain a $T^2$ orbit. As a result, $X_h$ must have codimension four.

  Applying Theorem \ref{G-L-S88} to the compact Hamiltonian manifold $W_Q$, we get that

\[  \tilde{f}_+(a+t)-\tilde{f}_-(a-t)\,\,=\,\,\displaystyle\sum_{X_h}\dfrac{\text{vol\,}(X_h)}{(d-1)!\prod_j\alpha_j}(t-a)^{d-1}
+\mathcal{O}((t-a)^d).\]

In the above equation, $X_h$ runs over the collection of all
 connected component of $M^{S^1}$ that sits inside $\Phi^{-1}((a,h))$, the $\alpha_j$'s are the weights of the representation of $S^1$
on the normal bundle of $X_h$, and $d$ is half of the real dimension
 of $X_h$.

 By our previous work, $d=2$ for any  $X_h$; moreover, the two non-zero weights of $X_h$ must have opposite signs.
So the jump in the derivative is strictly negative, i.e.,
\[ \tilde{f}_+'(a)-\tilde{f}'_{-}(a)<0.\]
This completes the proof of Theorem \ref{theorem1}.
\end{proof}

\section{Examples and consequences of main results} \label{sec5}

\subsection*{Applications of Theorems \ref{theorem1} and \ref{log-concavity-result}}

Our proof of Theorem \ref{theorem1} depends on Lemma \ref{invariant-kahler-structure}. We note that the proof of Lemma \ref{invariant-kahler-structure} builds on the classification result of symplectic $T^2$ action on symplectic four manifolds, cf. \cite{P10} and \cite{DP10}. However, we have a simpler proof in the following special case, which was the goal of the article \cite{kim} (see Section 1 therein).

\begin{theorem}[Kim \cite{kim}]\label{complexity-one-mcduff} Let $(M,\, \omega)$ be a compact,
connected, symplectic $2n$\--dimensional manifold.
Then every effective symplectic action of an $(n-1)$\--dimensional torus $T^{n-1}$ on $(M,\, \omega)$
with non-empty fixed point set is Hamiltonian.
\end{theorem}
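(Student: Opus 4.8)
The plan is to transplant the proof of Theorem \ref{theorem1} to the complexity-one case, where the relevant symplectic reduction is four-dimensional; this should let us bypass the K\"ahler input of Lemma \ref{invariant-kahler-structure} (and with it the classification of symplectic four-manifolds carrying a symplectic $T^2$-action used in \cite{P10,DP10}): on a four-orbifold the Duistermaat-Heckman density of a circle action is piecewise linear for elementary reasons, as in the proof of Theorem \ref{complexity-one}, so Theorem \ref{log-concavity-result} is not needed here. First I would invoke Rochon's Theorem \ref{ro} to reduce to the case $[\omega]\in {\rm H}^2(M;\Z)$, and then McDuff's Theorem \ref{m} to obtain a generalized momentum map $\Phi\colon M\to T^{n-1}$. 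Since exactness of the forms $\iota_{X_M}\omega$ is a linear and closed condition on $X\in\mathfrak{t}$, it suffices to show that for each splitting $T^{n-1}=S^1\times H$ with $\dim H=n-2$ the circle action is Hamiltonian; write $\phi:=\pi_{S^1}\circ\Phi$ and $\phi_H:=\pi_H\circ\Phi$, and note that if $\phi$ has a local extreme value the action is Hamiltonian by \cite[Lemma 2]{Mc1986}, so we may assume it does not. As $M^{T^{n-1}}\subseteq M^{S^1}$, the fixed set $M^{S^1}$ is nonempty.

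Next I would reduce by $H$. Choosing a regular value $h$ of $\phi_H$ lying in $\phi_H(M^{S^1})$, Lemma \ref{level-set} and Remark \ref{level-set-remark} show that the $S^1$-action preserves $\phi_H^{-1}(h)$ and descends to a compact connected symplectic four-orbifold $M_h:=\phi_H^{-1}(h)/H$, with an induced generalized momentum map $\widetilde\phi\colon M_h\to S^1$ that still has no local extreme value and with $(M_h)^{S^1}\neq\varnothing$. By reduction in stages, the function $f\colon S^1\to[0,\infty)$, $f(x):={\rm DH}_{T^{n-1}}(x,h)$, equals the Duistermaat-Heckman function of the induced $S^1$-action on $M_h$, so $f(x)=\int_{(M_h)_x}\omega_x$ is the symplectic area of the two-dimensional reduced space $(M_h)_x$. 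On each interval of regular values of $\widetilde\phi$ this area is an affine function of $x$, hence $\log f$ is concave there; only the walls remain to be understood.

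For the wall analysis I would use that a connected component of $(M_h)^{S^1}$ is either an isolated point or a two-dimensional suborbifold, and that in the latter case, and in the case of an isolated fixed point whose two weights have the same sign, $\widetilde\phi$ would attain a local extreme value there -- which is excluded. Hence every component of $(M_h)^{S^1}$ is an isolated point with two weights $m_1,m_2$ of opposite sign. After symplectically cutting $M_h$ over a small interval lifting a neighborhood of a critical value $a$ of $\widetilde\phi$ to $\mathbb{R}$ (producing a compact Hamiltonian $S^1$-orbifold whose Duistermaat-Heckman function agrees with $\widetilde f:=f\circ\exp$ near $a$), the Guillemin-Lerman-Sternberg formula (Theorem \ref{G-L-S88}) gives a slope jump $\widetilde f'_+(a)-\widetilde f'_-(a)=\sum_{\widetilde\phi(p)=a}\bigl(m_1^{(p)}m_2^{(p)}\bigr)^{-1}<0$. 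Combined with affineness on regular intervals, this exhibits $f$ as a positive, continuous, piecewise-linear function with a strictly downward corner at $a$, so $\log f$ would be concave on all of $S^1$ but strictly concave at $a$ -- impossible. Therefore $\phi$ is not surjective; a non-surjective generalized momentum map into $S^1$ lifts to $\mathbb{R}$, so the $S^1$-action is Hamiltonian. Letting the splitting vary shows the $T^{n-1}$-action is Hamiltonian, and undoing the reduction of the first paragraph gives the statement for the original form. Specializing to $n=2$ recovers McDuff's theorem \cite{Mc1986}.

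The hard part will be two non-obvious points. First, one must guarantee that the reduced four-orbifold $M_h$ actually meets a wall -- i.e.\ that $\phi_H(M^{S^1})$ contains regular values of $\phi_H$ -- and then pass from non-surjectivity of $\widetilde\phi$ to that of $\phi$: for a ``generic'' subcircle $S^1$, $M^{S^1}$ may be finite and $\phi_H(M^{S^1})$ may consist only of critical values of $\phi_H$. I expect this is circumvented by observing that it already suffices to treat the subcircles generated by rational vectors on the weight hyperplanes $\alpha_i^{\perp}\subset\mathfrak{t}$ at a point of $M^{T^{n-1}}$ (these still span $\mathfrak{t}$), for which the corresponding fixed components are positive-dimensional and the $H$-momentum map restricted to them has regular values. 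Second, the sign bookkeeping in Theorem \ref{G-L-S88} -- verifying that opposite weights produce a \emph{downward} corner -- must be carried out with the same care as in the last step of the proof of Theorem \ref{theorem1}.
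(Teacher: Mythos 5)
Your argument is correct in outline, but it takes a genuinely different route from the paper's. The paper proves Kim's theorem by first passing to the equivalent statement one dimension down (a $T^{n-2}$\--action with fixed points on a compact $(2n-2)$\--dimensional $N$), then stabilizing: it forms $Z=N\times T^2$ with the product form, the product action (\ref{product-action}) and the obvious commuting symplectic $T^2$\--action (\ref{product-action2}) with symplectic orbits, so that the hypotheses of Theorem \ref{theorem1} hold verbatim, and then reruns the proof of Theorem \ref{theorem1} on $Z$, replacing the only use of the K\"ahler input (Lemma \ref{invariant-kahler-structure}, via Theorem \ref{log-concavity-result}) by Theorem \ref{complexity-one}, where the Duistermaat\--Heckman function on a chamber is a degree\--one polynomial for elementary reasons. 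You instead stay on $M$ itself, split $T^{n-1}=S^1\times H$, reduce by $H$ at a regular value $h$ to a $4$\--dimensional orbifold $M_h$, and get log\--concavity on regular intervals from the affineness of the areas of the $2$\--dimensional reduced spaces; the wall analysis (fixed components of $S^1$ in $M_h$ are isolated points with weights of opposite sign, since surfaces or same\--sign weights would force local extrema of $\tilde\phi$, excluded by \cite[Lemma 2]{Mc1986}) replaces the role the commuting $T^2$ plays in Step 3 of the proof of Theorem \ref{theorem1}. Both routes achieve the stated goal of avoiding the classification of symplectic $T^2$\--actions on four\--manifolds from \cite{P10}; the paper's stabilization buys the ability to quote Theorems \ref{theorem1} and \ref{complexity-one} essentially unchanged, while your direct reduction is more economical and makes the weight/codimension bookkeeping elementary because the reduced space is already four\--dimensional.

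The two ``hard parts'' you flag are real, but they are not gaps relative to the paper: the paper's own Step 3 (which its proof of Theorem \ref{complexity-one-mcduff} defers to) also takes for granted that the chosen regular value $h$ of $\phi_H$ can be arranged so that $\Phi^{-1}((a,h))$ meets $M^{S^1}$, and it passes from strict log\--concavity of $f(x)={\rm DH}(x,h)$ at a single $h$ to non\--surjectivity of $\phi$ with the same brevity you do. Your suggested remedy for the first point (working with subcircles tangent to the weight hyperplanes at a $T^{n-1}$\--fixed point, which span $\mathfrak t$ once $n-1\geq 2$, so that the relevant $S^1$\--fixed components are positive\--dimensional) is plausible but, as you say, would still need one to check that $\phi_H$ restricted to such a component attains values that are regular for $\phi_H$ on all of $M$; and the GLS jump formula is being applied to a symplectic cut that is in general an orbifold, exactly as in the paper. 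So your proposal is at the same level of rigor as the paper's argument, reached by a different (and arguably more direct) reduction.
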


\begin{proof}
We prove the equivalent statement that if $(N,\, \omega)$ is $(2n-2)$\--dimensional compact, connected symplectic manifold, then every symplectic action of an $(n-2)$\--dimensional torus $T^{n-2}$ on $(N,\, \omega)$ with non-empty fixed point set is Hamiltonian.

 Indeed, suppose that there is a symplectic action of an $(n-2)$-dimensional torus on a $(2n-2)$\--dimensional compact connected symplectic manifold $N$.
 By Theorem \ref{m} and Theorem \ref{ro}, without  loss of generality we assume that there is a generalized momentum map $$\phi_N: N\rightarrow T^{n-2}.$$ Consider the action of $T^{n-2}$ on the product symplectic manifold $Z:=N\times T^2$
as given in equation (\ref{product-action}), where $T^2$ is equipped with the standard symplectic structure.  Define
\[ \Phi: N\times T^2\rightarrow T^{n-2},\,\,\,(x, t)\mapsto \phi_N(x), \,\,\textup{for all}\,\, x\in N,
\,\,\textup{for all}\,\, t\in T^2.\]
Then $\Phi$ is a generalized momentum map for the symplectic action of $T^{n-2}$ on $Z$. Moreover, there is an effective commuting
symplectic action of $T^2$ on $M$ with symplectic orbits given as in equation (\ref{product-action2}).

To show that the action of $T^{n-2}$ on $N$ is Hamiltonian, it suffices to show that the action of $T^{n-2}$ on $Z$ is Hamiltonian.
Using the same notation as in the proof of Theorem \ref{theorem1}, we explain that we can modify the argument there to prove that
the action of $T^{n-2}$ on $Z$ is Hamiltonian without using Lemma \ref{invariant-kahler-structure}.  We first note that the step 2 in the proof of Theorem \ref{theorem1} does not use
Lemma \ref{invariant-kahler-structure}, and that the same argument there applies to the present situation, which is a special case of what is considered in
Theorem \ref{theorem1}.

Lemma \ref{invariant-kahler-structure} is used in the step 1 in the proof of Theorem \ref{theorem1} to prove that $f$ is log-concave on $I_1$.
However, in the present situation, we can use Theorem \ref{complexity-one} instead to establish the log-concavity of the function $f$ on $I_1$. Since the proof of Theorem \ref{complexity-one} does not use Lemma \ref{invariant-kahler-structure}, Theorem \ref{complexity-one-mcduff}
can be established without using Lemma \ref{invariant-kahler-structure} as well.
\end{proof}

The following result  \cite[Proposition 2]{Mc1986} is a special case of Theorem \ref{complexity-one-mcduff}.

\begin{corollary}[McDuff \cite{Mc1986}] \label{Mc1986}
If $(M,\, \omega)$ is $4$\--dimensional compact, connected symplectic manifold,
then every effective symplectic $S^1$\--action on $(M,\, \omega)$ with non\--empty fixed point set is Hamiltonian.
\end{corollary}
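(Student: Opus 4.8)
The plan is to deduce the corollary directly as the case $n=2$ of Theorem~\ref{complexity-one-mcduff}. Setting $n=2$ there, the dimension $2n$ becomes $4$, the torus $T^{n-1}=T^{1}$ becomes a circle $S^1$, and the hypotheses---an \emph{effective symplectic} action with \emph{non-empty fixed point set}---are word for word those of the corollary, while the conclusion is exactly that the action is Hamiltonian. So once Theorem~\ref{complexity-one-mcduff} is in hand there is nothing further to prove, and I would simply cite it; the content is all upstream.

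For the reader it is worth recording which ingredients Theorem~\ref{complexity-one-mcduff} actually uses in this low-dimensional instance. By Theorem~\ref{ro} (Rochon) it suffices to treat the case where $[\omega]$ is integral, and then Theorem~\ref{m} (McDuff) produces, after possibly replacing $\omega$ by a cohomologous invariant form, a generalized circle-valued momentum map $\phi\colon M\to S^1$; since integrality of $[\omega]$ is harmless by Theorem~\ref{ro}, showing the action is Hamiltonian amounts to showing that $\phi$ cannot be surjective, i.e. that it has a local extreme value. Following the scheme in the proof of Theorem~\ref{complexity-one-mcduff} one is led to study the $S^1$-action on the $6$-dimensional product $Z:=M\times T^2$, which carries the obvious commuting $T^2$-action with symplectic orbits, reduces the question to the Hamiltonian character of the $S^1$-action on $Z$, and runs the argument of Theorem~\ref{theorem1}. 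The one place where that argument would invoke the classification of symplectic $4$-manifolds with a symplectic $T^2$-action (through Lemma~\ref{invariant-kahler-structure}) is the log-concavity step over an interval of regular values, and there one substitutes the elementary Theorem~\ref{complexity-one}, whose proof needs no classification because on a product $M\times T^2$ the relevant Duistermaat--Heckman function is affine.

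The crux, happening entirely upstream in Step~3 of the proof of Theorem~\ref{theorem1}, is the following: at an isolated critical value $a$ of the circle-valued map one must show that the jump in the derivative of the reduced Duistermaat--Heckman function is \emph{strictly} negative. This is exactly where the transverse $T^2$-symmetry enters---it forces the relevant reduced fixed-point component $X_h$ to contain a $T^2$-orbit, hence to have codimension four, so that by the Guillemin--Lerman--Sternberg jump formula (Theorem~\ref{G-L-S88}) the two nonzero normal weights have opposite signs and the jump is negative. Combined with log-concavity of $f$ on intervals of regular values (Theorems~\ref{complexity-one} and~\ref{Grahma-concavity}), this gives strict log-concavity of $f$, so $\phi$ is not surjective and the $S^1$-action is Hamiltonian. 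Specializing this to the $4$-dimensional, $S^1$ case yields the corollary.
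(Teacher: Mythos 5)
Your proposal matches the paper exactly: the paper records Corollary~\ref{Mc1986} as the special case $n=2$ of Theorem~\ref{complexity-one-mcduff} with no further argument, which is precisely your first paragraph. Your recounting of the upstream machinery (reduction to $Z=M\times T^2$, substituting Theorem~\ref{complexity-one} for the K\"ahler/classification input, and the strict negativity of the GLS jump via the codimension-four component) is an accurate summary of the paper's own proofs of Theorems~\ref{complexity-one-mcduff} and~\ref{theorem1}, so nothing further is needed.
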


Using \cite{Pr1973} we obtain the following result on measures.
A measure defined on the measurable subsets of $\mathbb{R}^n$ is
\emph{log\--concave} if for every pair
$A, B$ of convex subsets of $\mathbb{R}^n$ and for every $0<t<1$ such that
$tA + (1- t)B$ is measurable we have
$$
P(tA + (1 -t)B) \geq (P(A))^t(P(B))^{1-t}
$$
where the sign $+$ denotes Minkowski addition of sets.

\begin{theorem}[Pr\'ekopa \cite{Pr1973}]
If the density function of a measure $P$ defined on the measurable subsets of $\mathbb{R}^n$ is almost everywhere positive and is a log\--concave function, then the measure $P$ itself is  log\--concave.
\end{theorem}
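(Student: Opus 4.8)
The plan is to deduce the theorem from the Prékopa--Leindler inequality, the functional form of the Brunn--Minkowski inequality. Recall its statement: if $f,g,h\colon\R^n\to[0,\infty)$ are measurable and satisfy $h\big(tx+(1-t)y\big)\geq f(x)^t\,g(y)^{1-t}$ for all $x,y\in\R^n$, then $\int_{\R^n}h\geq\big(\int_{\R^n}f\big)^t\big(\int_{\R^n}g\big)^{1-t}$. I would first prove this inequality and then read off the theorem.

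To prove Prékopa--Leindler I would argue by induction on $n$. For $n=1$, assuming $\int f,\int g>0$ (otherwise the claim is trivial), introduce the transport maps $a,b\colon(0,1)\to\R$ defined by $\int_{-\infty}^{a(s)}f=s\int f$ and $\int_{-\infty}^{b(s)}g=s\int g$; these are nondecreasing and satisfy $a'(s)\,f(a(s))=\int f$ and $b'(s)\,g(b(s))=\int g$ almost everywhere. Setting $c(s)=t\,a(s)+(1-t)\,b(s)$, one has $c'(s)=t\,a'(s)+(1-t)\,b'(s)\geq a'(s)^t\,b'(s)^{1-t}$ by the weighted arithmetic--geometric mean inequality, while $h(c(s))\geq f(a(s))^t g(b(s))^{1-t}$ by hypothesis; since $c$ is nondecreasing, the change of variables $u=c(s)$ gives $\int_\R h\geq\int_0^1 h(c(s))\,c'(s)\,ds\geq\int_0^1\big(f(a(s))a'(s)\big)^t\big(g(b(s))b'(s)\big)^{1-t}\,ds=\big(\int f\big)^t\big(\int g\big)^{1-t}$. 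For the inductive step I would slice $\R^n=\R^{n-1}\times\R$: for fixed $x_n,y_n$ and $z_n=tx_n+(1-t)y_n$, the $(n-1)$-dimensional case applies to the slices $f(\cdot,x_n),g(\cdot,y_n),h(\cdot,z_n)$, yielding $H(z_n)\geq F(x_n)^tG(y_n)^{1-t}$ where $F,G,H$ denote the partial integrals over $\R^{n-1}$; then the one-dimensional case applied to $F,G,H$ closes the induction.

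With Prékopa--Leindler in hand the theorem follows quickly. Let $\rho$ be the density of $P$; since $\rho$ is almost everywhere positive with $\log\rho$ concave, its effective domain is a convex set of full measure, hence all of $\R^n$, so we may replace $\rho$ by a genuinely positive, pointwise log-concave (and continuous) representative without changing $P$. Fix convex sets $A,B\subset\R^n$ and $t\in(0,1)$; if $P(A)=0$ or $P(B)=0$ the asserted inequality is trivial, so assume both are positive. Apply Prékopa--Leindler to $f=\rho\,\mathbf 1_A$, $g=\rho\,\mathbf 1_B$, $h=\rho\,\mathbf 1_{tA+(1-t)B}$: if $f(x)>0$ and $g(y)>0$ then $x\in A$, $y\in B$, hence $tx+(1-t)y\in tA+(1-t)B$, and log-concavity of $\rho$ gives $h(tx+(1-t)y)=\rho(tx+(1-t)y)\geq\rho(x)^t\rho(y)^{1-t}=f(x)^tg(y)^{1-t}$, while if either value is $0$ the inequality is automatic. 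Therefore $\int h\geq(\int f)^t(\int g)^{1-t}$, that is $P(tA+(1-t)B)\geq P(A)^tP(B)^{1-t}$, which is log-concavity of $P$.

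The main obstacle is the proof of Prékopa--Leindler itself — equivalently Brunn--Minkowski — and within it the one-dimensional base case, where one must justify the measurability and absolute continuity of the transport maps $a,b$ and the almost-everywhere identities $a'f(a)=\int f$. This can be circumvented by arguing directly with super-level sets and the elementary one-dimensional bound $\lvert tE+(1-t)F\rvert\geq t\lvert E\rvert+(1-t)\lvert F\rvert$ for measurable $E,F\subset\R$, which avoids differentiability entirely. Once that inequality is available, the passage to log-concave measures — the actual content of the stated theorem — is routine, as sketched above.
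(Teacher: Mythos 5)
The paper offers no proof of this statement at all: it is imported verbatim from Pr\'ekopa's 1973 paper and used as a black box, so there is no internal argument to compare yours against. Your proposal supplies the standard modern proof, and it is correct: establish the Pr\'ekopa--Leindler inequality by induction on dimension, then apply it to $f=\rho\,\mathbf 1_A$, $g=\rho\,\mathbf 1_B$, $h=\rho\,\mathbf 1_{tA+(1-t)B}$ for convex $A,B$, exactly as the paper's definition of a log-concave measure requires. Two details are worth spelling out if you write this in full. First, in the one-dimensional base case the transport-map identities $a'(s)f(a(s))=\int f$ need regularity of $f$; your fallback through super-level sets is the cleaner route, but it requires a normalization such as $\operatorname{ess\,sup} f=\operatorname{ess\,sup} g=1$ so that $\{f>\lambda\}$ and $\{g>\lambda\}$ are simultaneously nonempty for $\lambda\in(0,1)$, and since $tE+(1-t)F$ need not be Lebesgue measurable the one-dimensional inequality $\lvert tE+(1-t)F\rvert\geq t\lvert E\rvert+(1-t)\lvert F\rvert$ should be phrased with inner measure or via compact subsets. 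Second, the hypothesis only gives a density that is almost everywhere positive and log-concave, while the pointwise inequality $h(tx+(1-t)y)\geq f(x)^t g(y)^{1-t}$ must hold for \emph{all} $x,y$; your observation that the convex set $\{\rho>0\}$ has full measure, hence (being convex and dense) equals all of $\mathbb{R}^n$, so that one may pass to an everywhere-positive, genuinely log-concave (hence continuous) representative without changing $P$, is precisely the right fix. With these points filled in, your argument is complete and coincides with the classical proof of Pr\'ekopa's theorem, which is all one could ask for here.
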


These notions extend immediately to our setting where $\mathbb{R}^n$ is replaced
by the dual Lie algebra $\mathfrak{t}^*$, equipped with the push\--forward
of the Liouville measure on $M$, and we have, as a consequence of Theorem
\ref{log-concavity-result}, the following.

\begin{theorem} \label{measures}
 Let $T$ be an $(n-2)$\--torus which acts effectively on a
compact connected symplectic $2n$\--manifold $(M,\omega)$ in a Hamiltonian fashion. Suppose that there is a commuting
symplectic action of a $2$\--torus $S$ on $M$ whose orbits are symplectic.
Then the Duistermaat-Heckman measure is log-concave.
\end{theorem}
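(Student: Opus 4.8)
The plan is to obtain Theorem \ref{measures} as a formal consequence of Theorem \ref{log-concavity-result} and Pr\'ekopa's theorem quoted just above. First I would fix a basis of the integral lattice $\mathfrak{t}_{\mathbb{Z}}\subset\mathfrak{t}$; this identifies $\mathfrak{t}^*$ linearly with $\R^{n-2}$ and, once the Lebesgue measure is normalised accordingly, identifies the push\--forward of the Liouville measure on $M$ by $\mu$ with a Borel measure $P$ on $\R^{n-2}$ whose density function is precisely the Duistermaat\--Heckman function ${\rm DH}_T$ of the $T$\--action.

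Next I would check that ${\rm DH}_T$ meets the hypotheses of Pr\'ekopa's theorem. By the Atiyah\--Guillemin\--Sternberg convexity theorem (Theorem \ref{gs}) the momentum image $\Delta=\mu(M)$ is a convex polytope, which has full dimension $n-2$ because the $T$\--action is effective; off $\Delta$ the function ${\rm DH}_T$ vanishes, while on the interior of $\Delta$ it is given by the strictly positive integral $\tfrac{1}{2}\int_{M_a}\omega_a^2$ of Theorem \ref{DHtheorem}(a). Hence ${\rm DH}_T$ is positive almost everywhere. It is log\--concave: this is exactly the content of Theorem \ref{log-concavity-result}, whose hypotheses coincide with those of the present statement. (A function that vanishes outside a convex set and is log\--concave on its interior is log\--concave in the sense required by Pr\'ekopa, since the defining inequality is automatic as soon as one of its arguments lies outside $\Delta$.)

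Finally, applying Pr\'ekopa's theorem to $P$ shows that $P$ is a log\--concave measure in the Brunn\--Minkowski sense; since this property is preserved by the linear identification $\mathfrak{t}^*\cong\R^{n-2}$, the conclusion is independent of the chosen lattice basis, and the Duistermaat\--Heckman measure on $\mathfrak{t}^*$ is log\--concave. I do not expect any genuine obstacle here: the entire argument is a reduction to the two cited theorems, and the only point requiring a line of care is the bookkeeping in the previous paragraph, identifying ``log\--concave density supported on $\Delta$'' with the hypothesis of Pr\'ekopa's theorem as stated on all of $\R^{n-2}$.
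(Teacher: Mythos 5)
Your argument is correct and is essentially the paper's own proof: the paper likewise obtains Theorem \ref{measures} by combining Theorem \ref{log-concavity-result} with Pr\'ekopa's theorem, transported to $\mathfrak{t}^*\cong\R^{n-2}$. The only additional content in your write\--up is the (welcome) bookkeeping about the density vanishing outside the momentum polytope, a point the paper glosses over with the remark that the notions ``extend immediately'' to $\mathfrak{t}^*$.
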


\subsection*{Examples}

Next we give some more examples to which our theorems apply.

\begin{example} \label{bb}
The Kodaira\--Thurston
     manifold is the symplectic manifold  \[{\rm KT}:=\left(\mathbb{R}^2 \times T^2/\mathbb{Z}^2\right).\]
   This is the case in Theorem \ref{theorem1} which is ``trivial" since $n-2=0$,
     and hence $T^{n-2}=\{e\}$ (in fact this manifold is non\--Kahler, and as such it does not
     admit any Hamiltonian torus action, of any non\--trivial dimension).

     Nevertheless   ${\rm KT}$ serves to illustrate a straightforward case
     which admits the  symplectic transversal symmetry, and it may be used to construct
     lots of examples satisfying the assumptions of the theorem, eg. ${\rm KT} \times S^2$.
          \end{example}

\begin{example} \label{xxxxx}
This example is a generalization of Example \ref{bb}.
Theorem \ref{theorem1} covers an infinite class of symplectic manifolds with
Hamiltonian $S^1$\--actions, for instance
\begin{eqnarray}
M:=\left((\mathbb{CP}^1)^2 \times \mathbb{T}^2 \right)/ \mathbb{Z}_2, \label{abc}
\end{eqnarray}
where $S^1$ acts Hamiltonianly on the left factor $(\mathbb{CP}^1)^2$, and $\T^2$ acts symplectically with
symplectic orbits on the right factor; the quotient is taken with respect to the natural diagonal action of
$\mathbb{Z}_2$ (by the antipodal action on a circle of $\mathbb{T}^2$, and by rotation by 180 degrees
about the vertical axes of the the spheres $S^2$), and the action on
$(\mathbb{CP}^1)^2 \times \mathbb{T}^2$ descend to an action on $M$. Any product symplectic
form upstairs also descends to the quotient.
\end{example}

\begin{example}
The $2n$\--dimensional
symplectic manifolds with $\T^n$\--actions in Theorem \ref{theorem1} are a subclass of those
classified in \cite{P10}, which, given that $\T^2$ is $2$\--dimensional,
are of the form $M_{Z}$ described in (\ref{efg}).

 Let $Z$ be any good $(2n-2)$\--dimensional orbifold, and let $\widetilde{\Sigma}$
be its orbifold universal cover. Equip $Z$ with any orbifold symplectic form, and $\widetilde{\Sigma}$
with the induced symplectic form. Equip $\T^2$ with any area form.
Equip $\widetilde{Z} \times \T^2$ with the product symplectic form, and the symplectic $\T^2$\--action
with symplectic orbits by translations on the right most factor. This symplectic $\T^2$\--action descends
to a symplectic $\T^2$\--action with symplectic orbits on
\begin{eqnarray}
M_Z:=\widetilde{Z} \times_{\pi_1^{\textup{orb}}(Z)}\T^2 \label{efg}
\end{eqnarray}
considered with the induced product symplectic form.

 An a priori intractable question is: \emph{give an
explicit list of  manifolds $M_{Z}$ which admit a Hamiltonian $S^1$\--action}. On the other hand,
examples of such $M_Z$, and hence fitting in the statement of Theorem \ref{theorem1},
may be constructed in all dimensions, eg. the manifold in (\ref{abc}).

Theorem \ref{theorem1} says that any commuting symplectic $\T^{n-2}$ action on (\ref{efg}) \emph{which has
fixed points}, must be Hamiltonian.
\end{example}

\section{Final Remarks} \label{remarks}

\subsection*{The theorems of Frankel and McDuff} \label{FrMc}
The relation between conditions (i) and (ii) in Section 1
can be traced back to a result proved
by T. Frankel in 1959, which says that if $(M,\, \omega)$ is $2n$\--dimensional
compact K\"ahler manifold, then every symplectic $S^1$\--action on $(M,\, \omega)$ with
fixed points is Hamiltonian \cite{Fr1959}. In 1986 D. McDuff showed that if $(M,\, \omega)$ is $4$\--dimensional symplectic manifold, then every symplectic $S^1$\--action on $(M,\, \omega)$ with
fixed points is Hamiltonian \cite[Proposition 2]{Mc1986}.

In the same paper, McDuff  gave an example of a symplectic
$S^1$\--action on a compact connected symplectic six manifold $(M,\, \omega)$ which has non\--empty fixed
point set but is not Hamiltonian. However, the fixed point set in her example is a submanifold,
and to this day there are no known examples of non\--Hamiltonian symplectic $S^1$\--actions on
$6$\--manifolds (or manifolds of any dimension $2n\ge 6$) which have a discrete fixed point set. This has given rise to what some authors have called the McDuff\--Frankel question: is there such an example, or,  on the contrary, is every
such action Hamiltonian?

A large number of papers have been written on the subject, but to this day
the answer is, as far as we know, unknown. The answer is known to be yes for semifree actions, see  Tolman\--Weitsman \cite[Theorem 1]{ToWe2000}.

Feldman \cite[Theorem 1]{Feldman2001}
characterized the obstruction for
a symplectic circle action on a compact manifold to be
Hamiltonian and deduced the McDuff and Tolman-Weitsman theorems by applying his
criterion. He showed that the Todd genus of a manifold admitting a symplectic circle action with isolated
fixed points is equal either to $0$, in which case the action is non-Hamiltonian, or to $1$, in which
case the action is Hamiltonian. In addition, any symplectic circle action on a compact manifold with positive Todd genus is Hamiltonian. For additional results regarding aspherical
symplectic manifold see \cite[Section 8]{KeRuTr2008}
and \cite{LuOp1995}.

Ono proved that if  $(M, \omega)$ is a compact connected symplectic $S^1$\--manifold which is
 a Lefschetz manifold\footnote{Let $(M ,\omega)$ be a symplectic $2n$-manifold and, for each $k=0,\ldots,n$, consider the linear map
$${\rm H}^{n-k}(M, \mathbb{R})\ni [\alpha]
\mapsto [\alpha\wedge \omega^k]\in {\rm H}^{n+k}(M,\mathbb{R}).$$ If the map
for $k=n-1$ is an isomorphism, then $(M, \omega)$ is called a \textit{Lefschetz manifold}. If all these maps for $k =0, \ldots, n$
are isomorphisms, then $(M, \omega)$ is called a \textit{strong
Lefschetz manifold}. }, then
if the $S^1$-action has fixed points, then it is
Hamiltonian.  Y. Lin proved recently \cite[Theorem 4.5]{Lin2007} that
there exist infinitely many topologically inequivalent $6$\--dimensional compact Lefschetz manifolds with
Hamiltonian $S^1$\--actions with fixed points, and which are
not homotopy equivalent to any compact K\"ahler manifold. Hence
within the category of compact connected symplectic manifolds we have an inclusion
$$
{\small \{\textup{$S^1$\--K\"ahler manifolds with fixed points}\} \subsetneq \{\textup{$S^1$\--Lefschetz manifolds with fixed points}\} }
$$
and hence Ono's Theorem covers a class of manifolds which is strictly larger than that covered in
 Frankel's theorem.

More recently,  Cho, Hwang, and Suh \cite{CHS10} studied symplectic circle actions on compact connected
$6$\--manifolds $(M,\omega)$ with non-isolated fixed points.
Assume that the the cohomology class $[\omega]$ is integral and that there exists a circle valued momentum map
$\mu: M\rightarrow S^1.$
These authors observed that if the Duistermaat-Heckman function of the circle valued momentum map
$F_{\mu}:S^1\rightarrow \R$  is log-concave on
$S^1$, then it must be a constant. However, if the fixed point set is non-empty, the Duistermaat-Heckman function can not be a constant since there will be a non-zero jump in its value moving across a critical level set.  In particular, combining this observation with a result in Lin \cite{Lin07} concerning the log-concavity of the Duistermaat-Heckman function, they proved that if the regular symplectic quotient taken with respect to the circle valued momentum map satisfying $b_+=1$, then the symplectic circle action must be Hamiltonian if its fixed point set is non-empty.

Finally, in the context of orbifolds, an analogue of Tolman and Weitsman's result was obtained by Godinho \cite{Go2006}.

\subsection*{Results on higher dimensional groups}
For higher dimensional Lie groups, some results are known, see
for instance Giacobbe \cite[Theorem 3.13]{Giacobbe2005}, Duistermaat\--Pelayo
\cite[Corollary 3.9]{DP10} and Ginzburg \cite[Proposition 4.2]{Gi1992}.
A Hodge theoretic approach for non\--compact manifolds was introduced by Pelayo\--Ratiu recently \cite[Theorem 1]{PeRa2010}.

\subsection*{Minimal fixed point set of symplectic actions}
The McDuff\--Frankel question is related to the question of
what is the minimal number of fixed points of a symplectic circle action
that has non\--empty fixed point set. If the action is Hamiltonian, this
is easily shown to be $n+1$ using Morse theory for the momentum map
of the action.

Motivated by the work of Tolman and Weitsman on semifree actions, in which they use
equivariant cohomological methods to approach this question,
Pelayo\--Tolman used a technique  \cite[Lemma 8]{PeTo2011} involving Vandermonde determinants. Using
 this technique they showed that the number of fixed points of a symplectic circle action with
non\--empty fixed point set on a compact symplectic $2n$\--dimensional manifold is at least $n+1$
(see \cite[Theorem 1]{PeTo2011}), provided the Chern class map
is somewhere injective (the \emph{Chern class map} $M^{S^1} \to \mathbb{Z}$
assigns to a fixed point the sum of the weights of the action at that point).
Recently, several authors have continued this equivariant cohomological approach, proving a number of
interesting results, see for instance Li \cite[Theorem 1.9]{Li2010}, Li\--Liu \cite{LiLi2010, LiLi2011} and the
references therein.

\section{Appendix:  actions with symplectic orbits} \label{appendix}

This section is a review of \cite[Sections 2, 3]{P10}, with a new remark at the end.
The remark is that many of the results
therein extends to the case of
symplectic torus actions with symplectic orbits
on non\--necessarily compact symplectic manifolds $(X,\,\omega)$.  We needed this
remark to prove Theorem \ref{log-concavity-result}.

Let $(X, \,\sigma)$ be a connected symplectic
manifold equipped with an effective symplectic action of a torus $T$ for which there is at least
one  $T$\--orbit which is a $\op{dim}T$\--dimensional symplectic submanifold of $(X, \, \sigma)$
(this implies that all are). \emph{We do not assume that $X$ is necessarily compact.}
Then there exists a unique non\--degenerate antisymmetric bilinear form
$\sigma^{\mathfrak{t}} \colon \mathfrak{t} \times \mathfrak{t} \to \R$
on the Lie algebra $\mathfrak{t}$ of $T$
such that
$
\sigma_x(u_X(x), \,v_X(x))=\sigma^{\mathfrak{t}}(u,v),
$
for every $u, \,v \in \mathfrak{t}$, and every $x \in X$.

One can check that the stabilizer subgroup $T_x$ of the $T$\--action at every point  $x \in X$ is
a finite group.

\subsection*{Orbit space $X/T$}

As usual, $X/T$  denotes the orbit space of the $T$\--action.
Let $\pi \colon X \to X/T$ the canonical projection. The space $X/T$
is provided with the maximal topology for which $\pi$ is continuous; this topology is Hausdorff.
 Because $X$ is connected, $X/T$ is
 connected.

 \subsection*{Orbifold atlas on $X/T$}
Let $k:=\op{dim}X-\op{dim}T$.
By the tube theorem of Koszul (see eg. \cite[Theorem \,B24]{GGK}),
for each $x \in X$ there exists a $T$\--invariant open neighborhood $U_x$ of the $T$\--orbit
 $T \cdot x$ and a $T$\--equivariant diffeomorphism $\Phi_x$ from $U_x$ onto the
 associated bundle $T \times_{T_x} D_x$, where $D_x$ is an open disk centered at the origin in
 $\R^k \cong \C^{k/2}$
 and $T_x$ acts by linear transformations on $D_x$.

The action of $T$ on $T \times_{T_x} D_x$ is induced by the action of $T$ by translations on
the left factor of $T \times D_x$.
 Because $\Phi_x$
is a $T$\--equivariant diffeomorphism, it induces a homeomorphism $\widehat{\Phi}_x$
 on the quotient $\widehat{\Phi}_x \colon D_x/T_x \to \pi(U_x)$, and there
 is a commutative diagram of the form
$$
\xymatrix{
T \times D_x \ar[r]^{\pi_x}      &  T  \times_{T_x} D_x \ar[d]^{p_x}  \ar[r]^{\Phi_x}  & U_x  \ar[d]^{\pi|_{U_x}} \\
 D_x \ar[u]^{i_x} \ar[r]^{\pi'_x} &  D_x/T_x   \ar[r]^{\widehat{\Phi}_x} &      \pi(U_x)}
$$
where $\pi_x$, $\pi'_x$, $p_x$ are the canonical projection maps, and $i_x(d):=({\rm e},\,d)$ is the inclusion map.
Let
$
\phi_x:= \widehat{\Phi}_x \circ \pi'_x.
$
The collection of charts
$$
\widehat{\mathcal{A}}:=\{(\pi(U_x), \, D_x, \, \phi_x, \, T_x)\}_{x \in X}
$$
is an orbifold atlas for  $X/T$. We call $\mathcal{A}$ the class of atlases
 equivalent to the orbifold atlas  $\widehat{\mathcal{A}}$.
 We denote the orbifold $X/T$ endowed with
the class $\mathcal{A}$
by $X/T$, and the class $\mathcal{A}$ is assumed.

\subsection*{Flat connection}
The collection $\Omega=\{\Omega_x\}_{x \in X}$ of subspaces
$\Omega_x \subset \op{T}_xX$,
where $\Omega_x$ is the $\sigma_x$\--orthogonal complement to $\op{T}_x(T \cdot x)$ in
$\op{T}_xX$,
for every $x \in X$, is a smooth distribution on $X$. The projection
mapping $\pi \colon X \to X/T$ is a smooth principal $T$\--orbibundle
for which $\Omega$ is a $T$\--invariant flat connection. Let $\mathcal{I}_x$ be the maximal integral manifold of the
distribution $\Omega$.

The inclusion $i_x \colon \mathcal{I}_x \to X$ is an injective immersion
between smooth manifolds and
the composite $\pi \circ i_x \colon \mathcal{I}_x \to X/T$ is an
orbifold covering map. Moreover, there exists a unique $2$\--form $\nu$ on $X/T$ such that
$\pi^* \nu|_{\Omega_x}=\sigma|_{\Omega_x}$ for every $x \in X$.
The form $\nu$ is symplectic, and so the pair $(X/T,\, \nu)$
is a connected symplectic orbifold.

\subsection*{Model for $X/T$}
We define the space that we call
the \emph{$T$\--equivariant symplectic model
$(X_{\textup{model},\, p_0},\, \sigma_{\textup{model}})$
of $(X, \, \sigma)$ based at a regular point $p_0 \in X/T$} as follows.
\begin{itemize}
\item[i)]
The space $X_{\textup{model},\, p_0}$ is the associated bundle
$$
X_{\textup{model}, \, p_0}:=\widetilde{X/T} \times_{\pi_1^{\textup{orb}}(X/T, \, p_0)} T,
$$
where the space $\widetilde{X/T}$ denotes the orbifold universal
cover of the orbifold $X/T$
based at a regular point $p_0 \in X/T$, and the orbifold fundamental group
$\pi^{\textup{orb}}_1(X/T,\,p_0)$ acts on the Cartesian product $\widetilde{X/T}
\times T$ by the diagonal action $x \, (y,\,t)=(x \star y^{-1},\,\mu(x) \cdot t)$, where
$\star \colon \pi_1^{\textup{orb}}(X/T, \, p_0) \times \widetilde{X/T} \to \widetilde{X/T}$ denotes the natural
action of $\pi_1^{\textup{orb}}(X/T, \, p_0)$ on $\widetilde{X/T}$,
and $\mu \colon \pi_1^{\textup{orb}}(X/T, \, p_0)
\to T$ denotes the monodromy homomorphism of
$\Omega$.
\item[ii)]
The symplectic form  $\sigma_{\textup{model}}$ is induced on the quotient by the product
symplectic form on the Cartesian product $\widetilde{X/T} \times T$.
The symplectic form on $\widetilde{X/T}$ is defined as the pullback by the
orbifold universal covering map $\widetilde{X/T} \to X/T$ of $\nu$.
The symplectic form on the torus
$T$ is the unique $T$\--invariant symplectic form $\sigma^T$ determined by
$\sigma^{\mathfrak{t}}$.
\item[iii)]
The action of $T$ on the space $X_{\textup{model},\, p_0}$  is the action of $T$ by translations
which descends from the action of $T$ by translations on the right factor of the product
$\widetilde{X/T} \times T$.
\end{itemize}
In this definition we are implicitly using that the universal
cover $\widetilde{X/T}$ is a smooth manifold and the orbit space $X/T$ is a good orbifold,
which is proven by analogy with \cite{P10}.

\subsection*{Model of $(X,\sigma)$ with $T$\--action}
In \cite{P10}, the following
theorem was shown (see Theorem 3.4.3).

\begin{theorem}[Pelayo \cite{P10}] \label{tt}
Let $(X, \,\sigma)$ be a compact connected  symplectic manifold
equipped
with an effective symplectic action of a torus $T$, for which at least one, and hence every $T$\--orbit is
a $\op{dim}T$\--dimensional symplectic submanifold of $(X,\, \sigma)$. Then
$(X,\,\sigma)$ is $T$\--equivariantly
symplectomorphic to its $T$\--equivariant symplectic model based
at any regular point $p_0 \in X/T$.
\end{theorem}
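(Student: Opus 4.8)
The plan is to construct the $T$-equivariant symplectomorphism explicitly from the flat connection $\Omega$ of $\sigma$-orthogonal complements to the orbit tangent spaces, and then to check separately that it is a diffeomorphism and that it is symplectic. Fix a regular point $p_0\in X/T$ and a point $x_0\in\pi^{-1}(p_0)$. The preliminary structure is already recorded above: Koszul's tube theorem makes every stabilizer $T_x$ finite and $\pi\colon X\to X/T$ a principal $T$-orbibundle, and there is a reduced symplectic form $\nu$ on $X/T$ with $\pi^*\nu|_{\Omega_x}=\sigma|_{\Omega_x}$. The extra ingredient I would establish first is that $\Omega$ is a \emph{flat} connection: since the action is symplectic each $1$-form $\iota_{u_X}\sigma$ $(u\in\mathfrak t)$ is closed, and these forms pointwise span the annihilator of $\Omega$ (this uses that the orbits are symplectic, so $\mathrm TX=\mathrm T(T\cdot x)\oplus\Omega_x$ is a $\sigma$-orthogonal splitting), so $\Omega$ is Frobenius-integrable and, being $T$-invariant, is a flat principal connection; its holonomy is the monodromy homomorphism $\mu\colon\pi_1^{\mathrm{orb}}(X/T,p_0)\to T$ used in the model. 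Compactness of $X$ (together with compactness of $T$ and finiteness of stabilizers) makes this connection complete, so every orbifold path in $X/T$ based at $p_0$ lifts uniquely to a horizontal path from $x_0$.

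Granting this, I would define
\[ \Phi\colon\widetilde{X/T}\times T\longrightarrow X,\qquad ([\gamma],t)\longmapsto t\cdot\lambda_\gamma(1), \]
where $\lambda_\gamma$ is the horizontal lift with $\lambda_\gamma(0)=x_0$ of a representative of $[\gamma]$; flatness makes $\lambda_\gamma(1)$ depend only on the orbifold-homotopy class. A short computation shows $\Phi$ is equivariant for the diagonal $\pi_1^{\mathrm{orb}}(X/T,p_0)$-action $x\cdot(y,t)=(x\star y^{-1},\mu(x)\cdot t)$ on the source (the point being that $\mu$ records exactly how horizontal endpoints change under deck transformations of the base), so $\Phi$ descends to a $T$-equivariant smooth map
\[ \overline\Phi\colon X_{\mathrm{model},\,p_0}=\widetilde{X/T}\times_{\pi_1^{\mathrm{orb}}(X/T,p_0)}T\longrightarrow X, \]
with $T$ acting by translations on the right factor.

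It then remains to see $\overline\Phi$ is a diffeomorphism and a symplectomorphism. Over an orbifold chart $\pi(U_x)\cong D_x/T_x$ both $X_{\mathrm{model},p_0}$ and $X$ are modeled on $T\times_{T_x}D_x$ and $\overline\Phi$ matches these models, so it is a local diffeomorphism; equivalently $\Phi$ is a smooth covering map whose deck group is $\pi_1^{\mathrm{orb}}(X/T,p_0)$ acting by the diagonal action. Since $X$ is connected, $\overline\Phi$ is therefore bijective, hence a diffeomorphism. For the symplectic form I would split the tangent bundle of $\widetilde{X/T}\times T$ into the $\widetilde{X/T}$- and $T$-summands: $\mathrm d\Phi$ carries the $T$-summand onto the tangent spaces of the $T$-orbits, where $\sigma$ restricts to the invariant form determined by $\sigma^{\mathfrak t}$ (hence agrees with $\sigma^T$), and carries the $\widetilde{X/T}$-summand into $\Omega$ (horizontal lifts are horizontal, and $T$-translations preserve $\Omega$), where $\sigma$ restricts to $\pi^*\nu$ (hence agrees with the pullback of $\nu$); the mixed pairings vanish on each side because the splitting is $\sigma$-orthogonal. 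Thus $\overline\Phi^*\sigma=\sigma_{\mathrm{model}}$, and independence of the model from the chosen regular base point follows by transporting universal covers along a path between two base points.

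The main obstacle I anticipate is not the symplectic linear algebra but making the orbifold geometry precise: one has to know that $X/T$ is a \emph{good} orbifold so that $\widetilde{X/T}$ is an honest smooth manifold (else the model is undefined), that horizontal lifting behaves well across the orbifold charts and factors through $\pi_1^{\mathrm{orb}}$ rather than $\pi_1$, and --- this is exactly where compactness of $X$ rather than merely of $T$ is needed --- that the flat connection is complete so that $\Phi$ is defined on all of $\widetilde{X/T}\times T$. These are the places where one must genuinely work, the remaining verifications being routine.
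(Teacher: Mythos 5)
Your construction---integrability of the $\sigma$-orthogonal distribution $\Omega$ from the closed forms $\iota_{u_X}\sigma$, the map $\Phi([\gamma],t)=t\cdot\lambda_{\gamma}(1)$ built from horizontal lifts, descent through the diagonal $\pi_1^{\rm orb}(X/T,p_0)$-action via the monodromy homomorphism $\mu$, the covering/local-diffeomorphism argument in the Koszul tube charts, and the identification $\overline{\Phi}^*\sigma=\sigma_{\rm model}$ through the orthogonal splitting into orbit directions (carrying $\sigma^T$) and $\Omega$ (carrying $\pi^*\nu$)---is exactly the proof of Pelayo \cite{P10} that this paper cites and whose outline it recalls, so your approach coincides with the paper's. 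The one inaccuracy is your claim that compactness of $X$ is what makes the flat connection complete: horizontal path-lifting in the principal $T$-orbibundle $\pi\colon X\to X/T$ requires only compactness of the fiber $T$ (with finite stabilizers), not of $X$, which is precisely the point of the appendix remark that the theorem and its proof hold verbatim with ``compact'' deleted---a version the paper genuinely needs, since it applies the model to the noncompact manifolds $M_a^{\rm reg}$ in the proof of Theorem \ref{log-concavity-result}.
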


The purpose of this appendix was to point out that Theorem \ref{tt}
with the word ``compact" removed from the statement  holds. We
refer to \cite{P10} for a proof in the case that $X$ is
compact, which works verbatim in the case that $X$ is not compact.
\\
\\
{\bf Acknowledgements.}   AP thanks Helmut Hofer for the hospitality at IAS and for discussions.

AP was partly supported by an NSF Postdoctoral Fellowship, NSF Grants
DMS-0965738 and DMS-0635607, an NSF CAREER Award, an Oberwolfach Leibniz Fellowship,
Spanish Ministry of Science Grant MTM 2010-21186-C02-01,
and by the Spanish National Research Council.

YL first learned the relationship between the log-concavity
conjecture and the McDuff-Frankel question from Yael Karshon. He would like to thank her for many stimulating
mathematical discussions while he was a postdoctoral fellow at the University of Toronto, and for her continuous support.

\noindent
\\
Yi Lin \\
Department of Mathematical Sciences \\
Georgia Southern University  \\
203 Georgia Ave., Statesboro, GA, USA  \\
{\em Email}: yilin@georgiasouthern.edu
\\
\\
\\
{\'A}lvaro Pelayo \\
School of Mathematics\\
Institute for Advanced Study\\
Einstein Drive, Princeton, NJ 08540 USA.\\
 \\
 and
 \\
 \\
\noindent
Washington University,  Mathematics Department \\
One Brookings Drive, Campus Box 1146\\
St Louis, MO 63130-4899, USA.\\
{\em Website}: \url{http://www.math.wustl.edu/~apelayo/}\\
{\em E\--mail}: {apelayo@math.wustl.edu} and {apelayo@math.ias.edu}

\end{document}